\title{Effective Banach spaces}
\author{Bj{\o}rn Kjos-Hanssen}
\begin{document}
	\maketitle
	\newtheorem{Kor}{Korollar}
	\newtheorem{Theorem}[Kor]{Theorem}
	\newtheorem{Lemma}[Kor]{Lemma}
	\newtheorem{Def}[Kor]{Definisjon} 
	\newtheorem{Bem}[Kor]{Bemerkning}
	\newtheorem{Prop}[Kor]{Proposisjon}
	\newtheorem{Oppgave}[Kor]{Oppgave}
	\newtheorem{Eksempel}[Kor]{Eksempel}
	\newcommand{\N}{\mathbb{N}}
	\newcommand{\R}{\mathbb{R}}
	\newcommand{\Q}{\mathbb{Q}}
	\newcommand{\C}{\mathbb{C}}
	\newcommand{\F}{\mathbb{F}}
	\newcommand{\og}{\wedge}
	\newcommand{\eller}{\vee}
	\newcommand{\ikke}{\neg}
	\newcommand{\snitt}{\cap}
	\newcommand{\union}{\cup}
	\newcommand{\hviss}{\leftrightarrow}
	\newcommand{\Hviss}{\Leftrightarrow}
	\newcommand{\saa}{\rightarrow}
	\newcommand{\Saa}{\Rightarrow}
	\newcommand{\delm}{\subseteq}
	\newcommand{\ikketom}{\neq \emptyset}
	\newcommand{\la}{\langle}
	\newcommand{\ra}{\rangle}
	\newcommand{\mb}{\mathbb}
	\newcommand{\mc}{\mathcal}
	\newcommand{\mf}{\mathfrak}
	\newcommand{\B}{\operatorname{B}}
	\newcommand{\op}{\operatorname}
	\newcommand{\Lin}{Line{\ae}rkombinasjon-aksiomet }
	\newcommand{\Norm}{Norm-aksiomet }
	\newcommand{\Lim}{Limes-aksiomet }

	\newcommand{\Gregor}{Grzegorczyk}
	\newcommand{\Rk}{\mb{R}_{\op{k}}}
	\newcommand{\Ck}{\mb{C}_{\op{k}}}
	\newcommand{\Xk}{\mf{X}_{\op{k}}}
	\newcommand{\Yk}{\mf{Y}_{\op{k}}}
	\newcommand{\Ak}{A_{\op{k}}}
	\newcommand{\Bk}{B_{\op{k}}}
	\newcommand{\Mk}{M_{\op{k}}}
	\newcommand{\BXk}{\B(\X)_{\op{k}}}
	\newcommand{\XNk}{\mf{X}^\N_{\op{k}}}
	\newcommand{\Bevis}{\paragraph{Bevis:}}
	\newcommand{\QED}{\ \ \(\Box\) \\}
	\newcommand{\X}{\mf{X}}
	\newcommand{\Y}{\mf{Y}}
	\renewcommand{\H}{\mc{H}}
	\newenvironment{Cases}{\begin{array}
	                       \{
	                      {cc}
	                       .}
	                      {\end{array}}

\tableofcontents
\begin{abstract}
	We show how Pour-El and Richards' computability structures on Banach spaces \(\X\) can be characterized in terms of 
	effective operators on the set of computable elements \(\Xk\).

	We give a computable, partial numbering of the set of effective operators \(\BXk\).
	Effective operators on \(\Xk\) have unique closed and continuous extensions to \(\X\).

	The non-separable operator space \(\B(\X)\) has an ineffective norm, unlike 
	the subspace \(\B_0(\X)\) of compact operators when \(\X\) has the approximation property.

	Totally defined numberings of \(\Xk\) do not give modulus of convergence. 
	A total numbering of \(\Xk\) is defined, which leads to a concept of quasi-effective operator. 
	\(x \mapsto \sqrt{2}x\) is not quasi-effective, so we conclude that partial numberings are preferable.
\end{abstract}


\newpage
\section*{Foreword}
	This Master's thesis in Mathematics at the Department of Mathematics, University of Oslo, 
	was written during the period August 24, 1996 to April 25, 1997 
	(cut a bit short since I started a PhD program that Berkeley). 
	The initial idea was to study problems arising from Pour-El and Richards' {\em Computability in Analysis and Physics} and 
	put their approach to computable functional analysis into a computability-theoretic context. 
	I thank Prof. Dag Normann for being interested in the topic and providing able weekly advice.

	Blindern, April 25 1997, (revised and translated Honolulu, 2012)
	Bj{\o}rn Kjos-Hanssen.

\chapter{Introduction}
	\section{Notation}
	We consider vector spaces with scalar field \(\F\), where \(\F=\C\) or \(\F=\R\).
	Ordered pair is denoted \(\la \cdot, \cdot \ra\), whereas inner product is denoted \(( \cdot | \cdot )\).
	Some notation is imported from Pedersen \cite{Ped} and Odifreddi \cite{Odi}. The notation \(\Xk\) for the set of computable elements of \(\X\) is inspired by domain theory \cite{Dom}\cite{Blanck}. 
	To indicate what variable a function depends on in logic one writes e.g.\ \(\lambda x.f(x)\) and in mathematical analysis \(x \mapsto f(x)\). 
	The latter is preferable her to avoid associations with the eigenvalue equation \(Tx=\lambda x\). 
	In logic the set of natural numbers is often denoted \(\omega\), whereas here and in analysis one uses \(\N\).
	The expression \(X \equiv Y\) denotes definition, and makes sense when the meaning of exactly one of \(X\) and \(Y\) is already known or defined. 
	We also use the definition \(n \equiv_\alpha m \Hviss \alpha n = \alpha m\). 
	When function symbols are written in juxtaposition, as in \(fg\) or \(f n\), we mean, respectively, composition \(f \circ g\) and application \(f(n)\), whereas multiplication is denoted by \(f \cdot g\). 
	For a sequence \(\{ f(n) | n \in \N \}\) we may write \(( f(n) )\), \(\{ f(n) \}\) or simply \(f(\cdot)\).
	If a result is mentioned here that falls within pure computability theory or pure mathematical analysis, it is not new; 
	if it falls within their intersection it may be assumed to be new unless otherwise indicated.
	If \(f:A \saa B\) is a function between two sets, we let \(\op{ran} f = \{ y \in B | (\exists x \in A)(f(x)=y) \}\). The characteristic function of a set \(A\) is denoted \(c_A\) or \(\chi_A\).
	Let \(\ell_2\) be the Hilbert space of complex sequences \((\xi_n)\) such that \(\sum |\xi_n|^2 < \infty\).
	Let \(\delta\) denote the Kronecker delta, i.e., \(\delta_{jk}=1\) if \(j=k\) and \(0\) otherwise.
	The linear span of a set of vectors \(S\) is the set of all finite linear combinations from \(S\). 
	The rational span of \(S\) is the set of all linear combinations from \(S\) using scalars in \(\Q\), or \(\Q+i\Q\) in the complex case. 
	We remind the reader that \(s X = \{ s x | x \in X \}\) is a common form of notation in analysis. 
	Let \(\mc{R} \delm \mc{PR} \delm \mc{P}\) be the set of total recursive, partial recursive, and partial, functions from \(\N\) to \(\N\), respectively.

	\paragraph{Pour-El and Richards' fourth problem}

	\begin{Def}[\Gregor \ and Lacombe; the earlier Banach-Mazur version did not demand effective uniform continuity (in the linear case this is automatic)]
		\(f\) is computable if it send computable sequences to computable sequences and is effectively uniformly continuous, i.e., 
		for some computable function \(d:\N \saa \N\) we have \(|x-y| \leq 1/d(N) \S{\aa} |f(x)-f(y)| \leq 2^{-N}\).
	\end{Def}

	In the appendix ``Open Problems'' \cite{Pour} it is asked about a connection between higher recursion theory (HRT) and computable functional analysis.

	HRT deals with functionals of functions from \(\N\) to \(\N\), functionals of such functionals, etc. 
	A functional approach to computable analysis was given in \cite{Gre1}, where the real numbers are identified with the set \(R\) of functions \(\phi :\N \rightarrow \N\).

	Given a surjective map \(\nu:\N \saa \Xk\) of the computable elements \(\Xk\) of a Banach space \(\X\) (a {\em numbering} of \(\Xk\)) an operator \(F\) is called effective if
	there is a partial recursive \(f:\N \saa \N\) such that \((\forall e \in \op{dom} \nu)(F \nu e = \nu f e)\). 
	The partial recursive functions are defined inductively.
	One can also define the effective operators directly inductively, by stipulating that certain operators \(F, F', ..\) are to be effective
	and close this set under certain schemas that we say preserve effectivity.
	The latter occurs in higher recursion theory, but such schemas do not always preserve linearity.

\section{The category of Banach spaces}

	The class of all Banach spaces is a category \textbf{Ban}. For simplicity we mostly consider bounded linear operators from \(\X\) to \(\X\) instead of from \(\X\) to \(\Y\). 
	\(\op{Hom}(\X) = \B(\X)\) is the set of continuous linear operators on \(\X\).

	The product space is given the maximum norm \(\| (x,y) \| = \op{max} \{ \| x \|, \| y \| \}\).

\chapter{The scientific literature in the area}
	\section{Computability structures on Banach spaces}

	In \cite{Pour} the concept of a computability structure on a Banach space is axiomatized, and the separable case is treated under the name ``effectively separable Banach space''. 
	We will assume separability and use the term ``effective Banach space''. This is in accord with the fact that effective metric spaces in the literature are assumed to be separable.
	We can still discuss non-separable Banach spaces with computability structures, as this notion is defined in \cite{Pour}.

	\paragraph{The axioms for a computability structure on a Banach space}

	The concept that is being axiomatized is ``computable sequence''. An {\em element} \(x \in \X\) is called computable if the constant sequence \((x,x,...)\) is computable.
	The computability structure may be viewed as the set of computable sequences, and an {\em effective Banach space} can then be defined as 
	a pair consisting of a Banach space \(\X\) and a computability structure on \(\X\). 
	If as in domain theory \cite{Dom} vi use the index \(\op{k}\) to denote the subset consisting of all computable elements, we may call a computability structure on \(\X\) for \((\X^\N)_{\op{k}}\).

	A Banach space is a complete normed vector space, and the axioms for computability structures appropriately concern limits, norms, and linear combinations.

	\paragraph{\Lin}

		Let \(\{ x_n \}\) and \(\{ y_n \}\) be computable sequences in \(\X\), let \(\{ \alpha_{nk} \}\) and \(\{ \beta_{nk} \}\) be computable double sequences of complex numbers, and 
		let \(d:\N \saa \N\) be a recursive function. Then the sequence
		\[ a_n = \sum_{k=0}^{d(n)}(\alpha_{nk} x_k + \beta_{nk} y_k) \]
		is computable in \(\X\).

	\paragraph{\Lim}
		Let \(\{ x_{nk} \}\) be a computable double sequence in \(\X\) such that \(\{ x_{nk} \}\) converges to \(\{ x_n \}\) as \(k \saa \infty\), effectively in \(k\) and \(n\). 
		Then \(\{ x_n \}\) is a computable sequence in \(\X\).

	\paragraph{\Norm}
		If \(\{ x_n \}\) is a computable sequence in \(\X\), then the sequence of norms \(\{ \|x_n\| \}\) is a computable sequence of real numbers.

	\begin{Def}
		Effective limits: If \(x_n\) converges effectively, let \(\lim^*_n x_n = \lim_n x_n\).

		The set of computable elements in a Banach space \(\X\) is denoted by \(\Xk\).

		The set of computable sequences from \(\X\) is denoted by \(\Xk^\N\).

		We say that a linear operator \(T\) {\em preserves computability} if it sends computable sequences to computable sequences: \((\forall(x_n) \in \Xk^\N)((Tx_n) \in \Xk^\N)\).

		A Banach space \(\X\) is called effective if there exists a computable sequence \(\{ e_n \}\) whose linear span is dense. 
		Such a sequence \(\{ e_n \}\) is called an effectively generating set (e.g.s.).
	\end{Def}

	\begin{Lemma}[Effective Density \cite{Pour}]
	If \(\{e_n\}\) is an e.g.s. in \(\X\), then a sequence \(x_n\) is computable iff we have \(x_n = \lim_k^* \sum_{j=0}^{d(n,k)} \alpha_{nkj} e_j\) 
	where \(d\) is a recursive function and \(\{ \alpha_{nkj}\}\) is a computable triple sequence. In particular an element \(x\) is computable if we can express it as 
	\(x = \lim_k^* \sum_{j=0}^{d(k)} \alpha_{kj} e_j\). 
	Thus a computability structure is uniquely determined by an e.g.s.
	\end{Lemma}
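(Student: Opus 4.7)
For the direction ($\Leftarrow$), assume $x_n = \lim_k^* \sum_{j=0}^{d(n,k)} \alpha_{nkj} e_j$ with $d$ recursive and $\{\alpha_{nkj}\}$ a computable triple. I would apply the Linearkombinasjon axiom to the e.g.s.\ $\{e_j\}$, the computable coefficients $\{\alpha_{nkj}\}$, and the recursive bound $d$, to conclude that the partial sums $y_{nk} := \sum_{j=0}^{d(n,k)} \alpha_{nkj} e_j$ form a computable double sequence in $\X$. Since by hypothesis $y_{nk} \to x_n$ effectively in $k$ uniformly in $n$, the Limes axiom then gives that $\{x_n\}$ is computable.

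For the direction ($\Rightarrow$), assume $\{x_n\}$ is computable. I would fix a standard computable enumeration $(z_i)_{i \in \N}$ of all rational linear combinations of finitely many $e_j$'s, produced by the Linearkombinasjon axiom from $\{e_j\}$ together with a recursive enumeration of finite tuples of scalars from $\Q + i\Q$. Then $\{x_n - z_i\}_{n,i}$ is a computable double sequence (Lin again), and the Norm axiom converts it to a computable double sequence of reals $r_{ni} := \|x_n - z_i\|$. For each $(n,k)$ I search for the least $i$ such that a rational $2^{-(k+2)}$-approximation to $r_{ni}$ is strictly below $2^{-(k+1)}$: density of the linear span guarantees the search terminates, and the tolerance guarantees $r_{ni} < 2^{-k}$. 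The resulting $i(n,k)$ is recursive, and decoding $z_{i(n,k)}$ yields the desired $d(n,k)$ and $\{\alpha_{nkj}\}$ with effective convergence in $k$ uniformly in $n$.

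The main obstacle is the effective search in the reverse direction. The two nontrivial ingredients are density of the linear span (which ensures termination of the unbounded search) and the Norm axiom combined with the $2^{-(k+2)}$ safety margin (which promotes an approximate inequality on computable reals into an exact bound). The ``In particular'' clause about single elements is immediate by specialising to the constant sequence $(x,x,\dots)$, and uniqueness of the computability structure follows at once: the equivalence just proved characterizes computability of a sequence purely in terms of the e.g.s.\ $\{e_n\}$ and partial recursive data, so any two computability structures sharing an e.g.s.\ must agree on every sequence.
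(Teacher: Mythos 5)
Your argument is correct, and it is the standard one. Note that the paper itself states Effective Density without proof, citing Pour-El and Richards, so there is nothing in the text to compare against; your two directions (Lin plus Lim for sufficiency; Lin, Norm, and a terminating unbounded search with a \(2^{-(k+2)}\) safety margin for necessity) are exactly how the cited proof goes, and the only point worth making explicit is that termination of the search needs density of the \emph{rational} span, which follows from density of the linear span together with density of \(\Q\) (or \(\Q+i\Q\)) in the scalar field.
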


	\begin{Lemma}[The composition property \cite{Pour}]
	If \(\{ x_n \}\) is a computable sequence in \(\X\) and \(f:\N \saa \N\) is a recursive function, then \(\{ x_{f(n)}\}\) is a computable sequence in \(\X\).
	\end{Lemma}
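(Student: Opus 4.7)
The plan is to derive this directly from the Linear Combination axiom, picking the scalar double sequence so that the sum collapses to a single term.

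First I would set $d(n) := f(n)$, which is recursive by hypothesis. Then I would define the double sequence $\alpha_{nk} := \delta_{k, f(n)}$; this is a computable double sequence of complex numbers because $f$ is recursive and the map $(n,k) \mapsto \delta_{k, f(n)}$ takes values in $\{0,1\} \subset \Q$, with decision procedure ``test whether $k = f(n)$''. For the second slot of the axiom I would take $\beta_{nk} := 0$ and let $\{y_n\}$ be any convenient computable sequence (e.g.\ the constant sequence $y_n = x_0$, which is computable since $\{x_n\}$ is).

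Applying the Linear Combination axiom to $\{x_n\}$, $\{y_n\}$, $\{\alpha_{nk}\}$, $\{\beta_{nk}\}$ and $d$, the resulting sequence
\[
 a_n \;=\; \sum_{k=0}^{f(n)} \bigl(\delta_{k,f(n)}\,x_k + 0 \cdot y_k\bigr) \;=\; x_{f(n)}
\]
is computable in $\X$, which is exactly the conclusion.

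The only non-routine point to check is that $\{\alpha_{nk}\}$ really qualifies as a computable double sequence of complex numbers in the sense used by the axiom; but since this sequence is $\{0,1\}$-valued and its value is a recursive function of $(n,k)$, any reasonable interpretation (effective rational approximations uniform in $(n,k)$) is satisfied trivially. So there is no real obstacle; the statement is essentially a one-line corollary of \Lin, and it illustrates that \Lin\ alone already encodes closure under recursive re-indexing.
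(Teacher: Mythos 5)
Your proposal is correct, and it is a genuinely different (and more self-contained) route than what the paper does. The paper does not prove the lemma at all: it cites Pour-El and Richards and then remarks that the composition property is trivial if one \emph{defines} computable sequences as those of the form $\{a_{f(n)}\}$ with $f$ recursive, so that the property reduces to closure of $\mc{R}$ under composition. You instead derive it directly from the Linear Combination axiom by taking $d(n)=f(n)$, $\alpha_{nk}=\delta_{k,f(n)}$, $\beta_{nk}=0$, so that the sum $\sum_{k=0}^{f(n)}\delta_{k,f(n)}x_k$ collapses to $x_{f(n)}$; the index $k=f(n)$ does lie in the summation range, and a $\{0,1\}$-valued double sequence whose value is a recursive function of $(n,k)$ is a computable double sequence of scalars in any reasonable sense. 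Your derivation is the one that actually works inside the axiomatic framework, which is the setting the lemma is stated in; the paper's remark only applies after one has already changed the definition of ``computable sequence.''

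One small blemish: you justify the auxiliary sequence by saying the constant sequence $y_n=x_0$ is ``computable since $\{x_n\}$ is.'' That claim is itself an instance of the composition property (with $f\equiv 0$), so as written it is circular. The fix is immediate: since all $\beta_{nk}=0$, the choice of $\{y_n\}$ is irrelevant, so just take $\{y_n\}=\{x_n\}$. With that substitution the argument is complete.
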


	This becomes a trivial remark if we consider computable sequences to be sequences of the form \(\{ a_{f(n)} \}_{n \in \N}\) where \(f\) is recursive. 
	Then the composition property follows from the fact that the set of recursive functions is closed under composition.

	In \cite{Pour} one defines ``effectively determined operator'' as follows:

	A closed operator \(T\) on a Hilbert-space \(\H\) is called effectively determined if there exists a computable sequence \(\{ e_n \}\) in \(\H\) such that \(\{ (e_n, Te_n) \}\) is a e.g.s. for the graph of \(T\).

	Here we modify this definition.
	We only consider bounded operators (we shall se that all effective operators are bounded).
	Moreover we need the notion of an effectively determined operator on an arbitrary Banach space.

	\begin{Def}[Effectively determined operator]
	A bounded operator \(T\) on a Banach space \(\X\) is called effectively determined if there exists an e.g.s. \(\{ e_n \}\) in \(\X\) such that \(\{ Te_n \}\) is a computable sequence. 
	In particular the domain of \(T\) is dense.
	\end{Def}

	Whereas \Lin and \Lim constrain the size of a computability structure from below, \Norm constrain it from above,
	and connects this with a classical concept: computable sequences of real numbers. 
	Therefore, \Norm is the axiom that must be verified when trying to determine if a set of objects stipulated to be computable generates a computability structure.

	We require computability structures to be non-empty. By \Lin, it follows that the constant sequence \((0,0,..)\) is computable in every computability structure.

	Pour-El \& Richards prove a powerful theorem with applications that allow for answering many questions on the effectivity of physically relevant operators 
	-- but it also leads to new questions regarding the relationship between effectivity of an operator and the physical quantity represented by the operator.

	\begin{Theorem}[First Main Theorem, \cite{Pour}]
	Let \(\X\) and \(\Y\) be Banach space med computability structures. Let \(\{e_n\}\) be en e.g.s. in \(\X\). Let \(T:\X \saa \Y\) be a closed linear operator defined on \(\{ e_n \}\) 
	and such that \(\{ T e_n \}\) is a computable sequence in \(\Y\). If \(T\) are bounded sender \(T\) computable sequences on computable sequences. 
	If \(T\) is unbounded, there is an element \(x \in \Xk\) such that \(Tx \notin \Yk\).
	\end{Theorem}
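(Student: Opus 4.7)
The plan is to prove the two cases separately.

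\emph{Bounded case.} Suppose $T$ is bounded. Given any computable sequence $\{x_n\} \in \Xk^\N$, apply the Effective Density Lemma to write $x_n = \lim^*_k s_{nk}$ where $s_{nk} = \sum_{j=0}^{d(n,k)} \alpha_{nkj} e_j$, with $d$ recursive and $\{\alpha_{nkj}\}$ a computable triple sequence. The hypothesis that $\{Te_j\}$ is computable combined with the Linear Combination Axiom yields that the double sequence $\{Ts_{nk}\} = \{\sum_j \alpha_{nkj} Te_j\}$ is computable in $\Y$. Continuity of $T$ (from boundedness) gives $Ts_{nk} \to Tx_n$ classically, and the estimate $\|Tx_n - Ts_{nk}\| \leq \|T\| \cdot \|x_n - s_{nk}\|$ transfers the effective modulus of convergence: shifting by $\lceil \log_2 \|T\| \rceil$ produces an effective modulus in $\Y$, uniformly in $n$. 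Applying the Limit Axiom then places $\{Tx_n\}$ in $\Yk^\N$.

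\emph{Unbounded case.} We aim to construct $x \in \Xk$ with $Tx \notin \Yk$. By the Norm Axiom applied to the computable sequences $\{e_n\}$ and $\{Te_n\}$, both $\{\|e_n\|\}$ and $\{\|Te_n\|\}$ are computable sequences of reals; hence, on indices with $e_n \neq 0$, so is the quotient $\{\|Te_n\|/\|e_n\|\}$. Unboundedness of $T$ on the linear span of $\{e_n\}$ makes this quotient unbounded, and since ``$\|Te_n\|/\|e_n\| > M$'' is a c.e.\ predicate of $n,M$, we effectively extract a subsequence $n_k$ with $\|Te_{n_k}\|/\|e_{n_k}\| \geq 4^k$. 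Setting $f_k = e_{n_k}/\|e_{n_k}\|$ gives $\|f_k\|=1$ and $\|Tf_k\| \geq 4^k$.

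Now fix an r.e.\ non-recursive $A \subseteq \N$ with an injective recursive enumeration $a_0, a_1, \ldots$. The idea is to build $x$ as a computable series $x = \sum_s c_s f_{a_s}$, choosing the coefficients $c_s$ to decay fast enough that the partial sums $x_s$ are effectively Cauchy in $\X$ (placing $x$ in $\Xk$ via the Limit Axiom), while the image series $\sum_s c_s Tf_{a_s}$ still converges in $\Y$ (placing $x$ in $\op{dom}(T)$ by closedness of $T$). The non-recursiveness of $A$ is then exploited to show $Tx \notin \Yk$: a computable modulus for the image series would allow one to read off enough of the enumeration $(a_s)$ to decide $A$.

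The main obstacle is the unbounded case, specifically the simultaneous satisfaction of (i) effective convergence of $\sum c_s f_{a_s}$ in $\X$, (ii) classical convergence of $\sum c_s Tf_{a_s}$ in $\Y$ so that $Tx$ is defined, and (iii) a genuine encoding of $A$ into the rate at which the image series converges, so that computability of $Tx$ contradicts non-recursiveness of $A$. Since the extraction provides only a lower bound on $\|Tf_k\|$, some additional refinement (for instance, a computable two-sided filter selecting $n_k$ with $4^k \leq \|Te_{n_k}\|/\|e_{n_k}\| \leq 5^k$, using the computability of the quotient to get the upper inequality as a c.e.\ condition on rational approximations) may be needed to tie the magnitudes $c_s\|Tf_{a_s}\|$ down from both sides. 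Closedness of $T$ is then invoked to identify the limit of the image series with $Tx$.
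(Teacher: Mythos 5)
The paper states this result without proof --- it is quoted from Pour-El and Richards \cite{Pour} --- so there is no internal proof to compare against; I will assess your argument on its own terms. Your bounded half is essentially the standard argument and is sound: Effective Density plus the \Lin axiom makes $\{Ts_{nk}\}$ a computable double sequence, the bound $\|Tx_n - Ts_{nk}\| \leq \|T\|\,\|x_n - s_{nk}\|$ shifts the modulus, and \Lim finishes. The one point you gloss over is that closedness (not continuity alone) is what puts $x_n$ into $\op{dom} T$ and identifies $Tx_n$ with $\lim_k Ts_{nk}$: $s_{nk}\to x_n$ and $\{Ts_{nk}\}_k$ is Cauchy because $T$ is bounded on the span, so the closed graph supplies $Tx_n$.

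The unbounded half contains a step that fails outright and a gap at the heart of the argument. First, unboundedness of $T$ does \emph{not} imply that the ratios $\|Te_n\|/\|e_n\|$ are unbounded over the generating sequence alone: take $Te_n = f$ for a fixed unit vector $f$, so every ratio is $1$, yet $\|T(\sum_{n\le N} e_n)\|$ grows like $N$ while the argument grows like $\sqrt{N}$ in a Hilbert space. Your extraction of $f_k$ therefore has nothing to extract. The search must range over finite \emph{rational linear combinations} $u$ of the $e_n$; since $\|u\|$ and $\|Tu\|$ are uniformly computable reals (by \Lin and \Norm), the condition $\|u\| < 2^{-k} \wedge \|Tu\| > 1$ is c.e.\ and, by unboundedness, satisfiable for every $k$ --- and the two-sided control you want should come from rescaling $u$ by a computable factor (possible because $\|Tu\|$ is a computable real), not from filtering the ratio into a window $[4^k,5^k]$, which need not be hit. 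Second, the decisive claim --- that computability of $Tx$ would decide the r.e.\ non-recursive set $A$ --- is asserted rather than proved. In a general Banach space $\Y$ the terms $Tu_{a_s}$ can interfere and cancel, so a computable approximation to $Tx$ does not automatically reveal which bumps are present; one must construct the series so that each term dominates the whole tail together with the precision needed to detect it (choosing $\|Tu_{a_s}\|$ large relative to $\sum_{t>s}\|Tu_{a_t}\|$). That telescoping/domination arrangement is exactly the delicate content of Pour-El and Richards' proof, and it is the piece missing from your sketch.
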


	All the assumptions are satisfied for most physically relevant operators, so the unbounded ones among these are ineffective in a strong sense: 
	If \(T:\X \saa \X\) is unbounded then \(\Xk\) is not an invariant subspace for \(T\), 
	i.e. \(T[\Xk] \not\delm \Xk\).

	For general phenomena in computable analysis and in applications of the axiomatization to classical separable Banach space see \cite{Pour}.

\section{Effective metric spaces}

	Blanck \cite{Blanck} studied effective metric spaces using {\em Scott-Ershov-domains} \cite{Dom}.
	In recursion theory we often wish to reduce the uncountable to the countable in some sense. 
	The countable set of computable elements is of special interest, as most elements that are important in practice are computable. 
	For example one can define an algebraically closed field of computable complex numbers that contains all the numbers that one will encounter in practice outside of recursion theory. 
	When we define computable functions, we require that they treat computable input in a computable way.

	\begin{Def}
		Let \(\rho\) be en fixed standard enumeration of the computable real numbers \(\Rk\).
		A computable metric space is a pair \(((A,\alpha_0),d)\) where \(\alpha_0:\N \saa A\) is surjective, and the relation \(\equiv_{\alpha_0}\) given by
		\(m \equiv_{\alpha_0} n \Hviss \alpha_0 m = \alpha_0 n\) is recursive, and \(d:A^2 \saa \Rk\) is a \((\alpha_0,\rho)\)-computable metric, i.e. 
		there is a recursive \(\hat{d}\) such that \(\rho \circ \hat{d} = d \circ \alpha_0^2\).
	\end{Def}

	Let \(A^*\) be the closure of \(A\), and let \(\Ak\) be the effective closure of \(A\). If \(A \delm B \delm A^*\) and \(A\) is a computable metric rom, is called \(B\) an effective metric space.

	An element \(x \in A^*\) is called computable if there exists a \(\alpha_0\)-computable sequence \((a_n)\) from \(A\) such that \(\forall n \quad d(a_n,x)<2^{-n}\). 
	An index for \((a_n)\) is then called an \(\alpha\)-index for \(x\). We can then define \((x)_i = a_i\), the \(i\)th approximation of \(x\).
	\(\alpha_0\) is thus an enumeration of the countable dense subset \(A\), whereas \(\alpha\) is an enumeration of the effective completion \(\Ak\).

	We note quote one of Blanck's theorems, which uses common features between the numbering of \(\Ak\) and Blanck's construction of a domain representation of metric spaces 
	to make a decisive connection between standard resultats in domain theory, such as the Kreisel-Lacombe-Shoenfield theorem, and metric spaces.

	\begin{Theorem}
	Let \(A^*\) be an effective metric space, and let \(D\) be the constructed domain representation. There exists a recursive homeomorphism between the numbered set
	\(\Ak\) and a certain counterpart in \(D\).
	\end{Theorem}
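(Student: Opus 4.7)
The plan is to construct a pair of mutually inverse effective maps between the numbered set $\Ak$ and the set of total computable ideals in the Scott--Ershov domain $D$, and verify that this pair is a recursive homeomorphism with respect to the induced topologies.

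First I would fix the domain $D$. Its basis consists of formal balls $(a,r)$ with $a \in A$ and $r \in \Q_{>0}$, ordered by $(a,r) \sqsubseteq (b,s) \Hviss d(a,b) + s \leq r$; its total elements are the ideals $I$ with $\inf\{r : (a,r) \in I\} = 0$, and each such $I$ determines a unique point $\pi(I) \in A^*$. The ``counterpart'' in $D$ is the set $D_{\op{tot}}$ of total ideals, restricted to those admitting an r.e.\ index, with the induced numbering.

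From $\Ak$ to $D$: given an $\alpha$-index $e$ for $x$, so that $(x)_n = a_n$ satisfies $d(a_n,x) < 2^{-n}$, effectively enumerate the downward closure of $\{(a_n, 2^{-n}) : n \in \N\}$ in the basis; these balls are pairwise consistent since $d(a_n,a_m) \leq 2^{-n} + 2^{-m}$, so the result is a total ideal $I_x$ with $\pi(I_x) = x$, whose index is recursive in $e$. Conversely, given an index for a total computable $I$, we have an r.e.\ list $(b_i, r_i)$ of basis elements of $I$. By totality, for every $N$ some $r_i$ is below $2^{-N}$, and we can search for the first such $i$; the sequence of centers thus obtained satisfies $d(b_{i_N}, \pi(I)) \leq 2^{-N}$ and hence gives an $\alpha$-index for $\pi(I)$, uniformly in the index of $I$.

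Composing the two maps yields the identity on points, so we have a bijection on numbered sets. For the topological part, the open ball $B(a,r) \snitt A^*$ in $A^*$ corresponds exactly to the Scott-open set $\{I \in D_{\op{tot}} : (a,r) \in I\}$, which makes continuity in both directions transparent. The main obstacle is the reverse map: the search for small-radius balls must always terminate, which requires that totality of $I$ be effectively witnessed by the enumeration given by the index. This is precisely the feature built into Blanck's domain representation; once it is in place, the rest of the argument reduces to a routine comparison of the two numberings.
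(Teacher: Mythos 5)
Your proposal is correct and follows essentially the route the paper indicates: the paper gives no detailed proof of this theorem (it is quoted from Blanck, with only the remark that ``the proof uses the fact that a computable element is a limit of a computable geometrically convergent sequence, given with an index for that sequence''), and your two reductions between \(\alpha\)-indices and r.e.\ indices of total ideals of formal balls are precisely that fact exploited in both directions, plus the routine matching of basic open sets. The one imprecision is in the forward direction: directedness of the downward closure of \(\{(a_{\varphi_e(n)},2^{-n})\}\) does not follow from pairwise consistency alone but from the strict inequalities \(d(a_{\varphi_e(n)},x)<2^{-n}\) (which put arbitrarily small balls around \(x\) into the set), and to keep the resulting ideal r.e.\ one should order the formal balls by the strict relation \(d(a,b)+s<r\) rather than \(\leq\) --- both points are standard and easily repaired.
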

	The proof uses the fact that a computable element is a limit of a computable geometrically convergent sequence, given with an index for that sequence.

	Blanck studies several substructures of \((\Rk,\rho)\), and questions on the effectivity in spaces that are countable unions of compact sets, i.e., locally compact and \(\sigma\)-compact spaces. 
	Many commonly studied spaces in the theory of linear operators such as (\(L^p\) and \(C([0,1]\) do not have this property.

	\begin{Def}
	A function \(f:M \saa M\) on an effective metric space \(M\) is called effectively continuous if there exists \(g \in \mc{R}\) such that for each basic open set \(B(\alpha e_0, \rho k_0)\), we have
	\[
		f^{-1}[B(\alpha e_0,\rho k_0)] = \bigcup_{\la e,k \ra \in \mc{W}_{g(e_0,k_0)}} B(\alpha e, \rho k)
	\]
	\end{Def}

	We next state Ceitin's theorem, also known as Kreisel-Lacombe-Shoenfield for metric spaces.

	\begin{Theorem}
	Let \((A^*,\nu_0\) and \((B^*,\nu_1)\) be computable metric spaces and \(f:\Ak \saa \Bk\) a \((\nu_0,\nu_1)\)-computable function. Then \(f\) is effectively continuous.
	\end{Theorem}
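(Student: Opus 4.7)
The plan is to adapt the Kreisel-Lacombe-Shoenfield argument to metric balls. Fix \((e_0, k_0)\) and write \(V = B(\alpha e_0, \rho k_0)\). Unpacking the hypothesis, \((\nu_0, \nu_1)\)-computability of \(f\) yields a partial recursive \(h\) such that whenever \(e\) is an \(\alpha\)-index of \(x \in \Ak\), \(h(e)\) is a \(\beta\)-index of \(f(x) \in \Bk\). The goal is to produce, uniformly in \((e_0, k_0)\), an r.e.\ enumeration of pairs \(\la e, k \ra\) for which \(f[B(\alpha e, \rho k) \cap \Ak] \delm V\), and to verify that this enumeration covers \(f^{-1}[V]\).

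The heart of the argument is a pointwise effective modulus: given an \(\alpha\)-index \(e_x\) with \(f(x) \in V\), effectively produce a radius \(\delta > 0\) such that \(f[B(x, \delta) \cap \Ak] \delm V\). I would compute the least \(n\) for which the \(n\)-th term \(b_n\) of the \(\beta\)-sequence named by \(h(e_x)\) lies in \(V\) with safety margin \(2 \cdot 2^{-n}\), so that any point of \(\Bk\) within \(2^{-n}\) of \(b_n\) automatically lies in \(V\). The algorithm producing \(b_n\) reads only finitely many entries of the \(e_x\)-sequence; the KLS content is that this ``finite use'' can be bounded uniformly across all \(\alpha\)-indices for the same \(x\). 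Here the recursion theorem enters: given a candidate bound \(N\), construct via \(s\)-\(m\)-\(n\) an auxiliary \(\alpha\)-index \(e^*\) that copies \(e_x\) on the first \(N\) entries but afterwards names a point \(y \in A^*\) close to \(x\) yet otherwise at our disposal; extensionality of \(f\) through \(h\) forces the stage-\(n\) output to agree with \(b_n\), establishing the modulus \(\delta = 2^{-N-1}\).

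Once the pointwise modulus is in hand, the remaining work is routine dovetailing: enumerate a pair \(\la e, k \ra\) whenever the construction above, run with center \(\alpha e\) and tolerance \(\rho k\), certifies that \(B(\alpha e, \rho k)\) maps into \(V\). Coverage of \(f^{-1}[V]\) follows by applying the pointwise modulus to each \(x \in f^{-1}[V]\) and picking a computable center \(\alpha e\) at distance at most \(\delta/2\), using the density of \(A\) in \(A^*\) and the computability of the metric. Recursiveness of the resulting \(g\) is immediate from uniformity in \((e_0, k_0)\).

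The main obstacle is the modulus step, specifically reconciling the geometric convergence rate required of \(\alpha\)-indices (the \(n\)-th term must lie within \(2^{-n}\) of the limit) with the combinatorial ``same prefix'' surgery on which the KLS trick rests: a naive copy-then-diverge construction may fail the rate condition, and one must pad or rescale the tail so that \(e^*\) remains a legitimate \(\alpha\)-index. An alternative, suggested by the preceding theorem, is to transfer the standard domain-theoretic Kreisel-Lacombe-Shoenfield result across the recursive homeomorphism between \(\Ak\) and its counterpart in the domain representation \(D\); the subtleties are then absorbed into that homeomorphism, which is already taken for granted above.
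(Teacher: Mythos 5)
First, note that the paper does not prove this statement at all: it is quoted as Ceitin's theorem (Kreisel--Lacombe--Shoenfield for metric spaces), stated immediately after Blanck's recursive-homeomorphism theorem and attributed to the literature. So your sketch is being measured against the standard proof, not against anything in the text. Your overall architecture --- enumerate certified balls \(B(\alpha e,\rho k)\) into \(\mc{W}_{g(e_0,k_0)}\), reduce to a pointwise effective modulus at each \(x\in f^{-1}[V]\), then dovetail --- is the right shape. The gap is in the modulus step, and it is the heart of the theorem. You assert that ``the algorithm producing \(b_n\) reads only finitely many entries of the \(e_x\)-sequence.'' It does not: \(\varphi_{h(e)}(n)\) is computed from the \emph{code} \(e\), not from oracle access to the sequence \(\varphi_e\), so two \(\alpha\)-indices whose sequences agree on their first \(N\) terms can be sent by \(h\) to completely unrelated outputs. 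Extensionality constrains \(h\) only on distinct indices of the \emph{same} point. Consequently your construction --- guess a bound \(N\), build \(e^*\) copying \(e_x\) for \(N\) steps and then naming a nearby \(y\) --- yields merely an index of \(y\), and nothing forces the stage-\(n\) output of \(h(e^*)\) to agree with \(b_n\); that agreement is precisely what the theorem is trying to establish. The actual KLS/Ceitin argument uses the recursion theorem differently: \(e^*\) is built to \emph{monitor its own computation}, following \(e_x\) exactly until the computation of \(\varphi_{h(e^*)}(n)\) has been observed to halt, and only then switching; the halting stage, not a guessed \(N\), is what bounds the use, and the contradiction (or the verified modulus) comes from the fact that the already-halted computation cannot change after the switch. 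You invoke the recursion theorem, but in a place where it does no work.

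Two further remarks. The rate-of-convergence obstacle you flag at the end is real but secondary; the genuinely hard part of Ceitin's theorem (and the reason it is considered strictly more delicate than KLS on Baire space) is that the switch target \(y\) must itself be reachable by a legitimate geometrically convergent sequence extending the copied prefix, which forces the covering/amalgamation argument found in the standard proofs rather than a single surgered index. Your proposed alternative --- transferring KLS for domains across Blanck's recursive homeomorphism between \(\Ak\) and its counterpart in the domain representation \(D\) --- is legitimate and is in fact exactly how the paper positions the result; but taking that route turns your proof into a citation, since all of the difficulty above is buried inside the homeomorphism and the domain-theoretic KLS theorem.
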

	In particular, \(f\) is continuous.

\chapter{Numberings}
	
\section{Effective Banach spaces}

	The axiomatization from \cite{Pour} gives us that the norm, effective limit and vector space operations shall be effective operations. 
	They start with a computable sequence and e.g.s. \(\{e_n\}\) for the separable Banach space \(\X\). 
	The computable elements are obtained as the effective closure of the rational span \(A = \{ a_n \}\) to \(\{e_n\}\), which is dense in \(\X\). 
	For recursion theoretical treatment it is better to express conditions using the concept of effective operation than using an axiomatization of the concept of computable sequence.

	\(A\) can be given a canonical totally defined numbering (surjective map) \(\alpha_0:\N \saa A\) where we use a standard numbering of the rational numbers \(\Q\), 
	whereas the effective closure \(\Ak = \Xk\) receives a partial numbering \(\alpha:\N \saa \Xk\).
	In general can one define the concept effective operator by

	\begin{Def}[\(\nu\)-effective operator]
		Let \(\nu\) be a numbering of a set \(X\). A operator \(F\) is called \(\nu\)-effective if \((\exists \varphi \in \mc{PR})(\forall e \in \op{dom}\nu)(F \nu e = \nu \varphi(e))\).
	\end{Def}

	The following follows from the definition:

	\begin{description}
		\item[  (i)] \(\op{dom} \nu \delm \op{dom} \varphi\)
		\item[ (ii)] \(\varphi[\op{dom} \nu] \delm \op{dom} \nu\)
		\item[(iii)] \(\varphi\) are extensional: \((\forall e,d \in \op{dom} \nu)(\nu e = \nu d \saa \nu f e = F \nu e = F \nu d = \nu f d)\)
	\end{description}

	With this in mind we define the concept of an {\em effective Banach space}.

	\begin{Def}
		Let \(\X\) be a separable Banach space and \(\nu:\N \saa \X\) a numbering.
		\((\X,\nu)\) is called an effective Banach space if the following operations are \(\nu\)-effective:

		\begin{description}
		\item[  (i)] vektoraddition \(+:\Xk^2 \saa \Xk\),
		\item[ (ii)] scalar multiplication with scalars in \(\Ck\) or \(\Rk\),
		\item[(iii)] the norm \(\| \cdot \|:\Xk \saa \Rk\) and
		\item[ (iv)] effective limits \(\lim^*:\XNk \saa \Xk\).
		\end{description}

		A computable element in \(\X\) is an element in \(\Xk \equiv \op{ran} \nu \delm \X\).
	\end{Def}
	Note that \(\Xk\) by construction is a countable subset of \(\X\).

	 We would like to make connections between the axiomatic approach in \cite{Pour} and the theory of effective operators. The basic observation is as follows.

	\begin{Theorem}[Sequential Effectivity] \label{Theorem:Sekvens}
	Let \(\X\) and \(\Y\) be effective Banach spaces.
	Each continuous function \(f:\X \saa \Y\) that preserves effective limes and computability of sequences are effective.
	\end{Theorem}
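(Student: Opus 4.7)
The plan is to exhibit a partial recursive $\varphi$ that, given $e\in\op{dom}\nu_\X$, produces an index for $f(\nu_\X e)$ under $\nu_\Y$. The idea is to trace a canonical effective Cauchy representation of $x=\nu_\X e$ through $f$, using the two preservation hypotheses to keep the data effective, and then compress the result back to a single index via the $\nu_\Y$-effectivity of $\lim^*$.

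Concretely, I would proceed in four uniform steps. First, from $e$ I extract an index for a computable sequence $(x_n)\in\XNk$ with $x_n\to x$ effectively; this is possible because $\Xk$ is constructed as the effective closure of the rational span of the e.g.s., and by the Effective Density Lemma every $\nu_\X$-index codes, up to partial recursive manipulation, a rate-equipped Cauchy sequence from that span. Second, since $f$ preserves computability of sequences, I obtain an index for the computable sequence $(f(x_n))\in\Yk^\N$. Third, since $x_n\to x$ effectively and $f$ preserves effective limits, $(f(x_n))$ converges effectively to $\lim_n f(x_n)$, which equals $f(x)$ by continuity of $f$; this yields effective-limit data for $(f(x_n))$. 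Fourth, I invoke the partial recursive map witnessing the $\nu_\Y$-effectivity of $\lim^*:\Yk^\N\saa\Yk$ in the definition of an effective Banach space, to obtain $\varphi(e)$ with $\nu_\Y\varphi(e)=\lim^*_n f(x_n)=f(x)$. Composing the four steps defines $\varphi$, and the chain of identities shows that it implements $f$ on indices.

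The main obstacle is interpreting the verbal hypotheses ``preserves computability of sequences'' and ``preserves effective limits'' with the effective uniformity required for steps two and three, i.e.\ as being witnessed by partial recursive index transformations on $\XNk$ and on effective-limit witnesses, respectively. Without this uniform reading, the non-uniform conclusions $(f(x_n))\in\Yk^\N$ and $f(x_n)\to f(x)$ effectively would not feed into a partial recursive $\varphi$. In the numbering framework adopted in this chapter the uniform reading is the natural one, and it is what is tacitly assumed; continuity of $f$ itself is used only once, to identify the abstract limit of $(f(x_n))$ with $f(x)$, while the remaining work is index arithmetic licensed by the axioms of an effective Banach space.
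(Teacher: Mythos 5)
Your proposal is correct and follows essentially the same route as the paper: represent \(x=\nu_\X e\) as an effective limit of a computable sequence drawn from the rational span \(\{a_n\}\) of an e.g.s., push that sequence through \(f\) using preservation of computable sequences, identify the limit with \(f(x)\) by continuity, and recover an index from the modulus via preservation of effective limits. The only refinement worth noting is that the paper applies sequence-preservation once to the single fixed sequence \(\{a_n\}\) and then obtains \((f(a_{g(n)}))\) by the composition property, which partially defuses the uniformity concern you raise in your final paragraph.
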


	\begin{proof}
	Let \(\{e_n\}\) be a e.g.s. in \(\X\). Let \(\{a_n\}\) be the rational span of \(\{e_n\}\), ordered in a computable sequence.  
	Since \(\{a_n\}\) is computable and a countable dense subset, a general computable element in \(x \in \Xk\) is an effective limit \(x= \lim^* a_{g(n)}\) (\(g \in \mc{R}\))  of elements from \(A\). 
	Since \(f\) are sequentially computable are \(\{f(a_n)\}\) a computable sequence. By continuity of \(f\) are \(f(x) = \lim f(a_{g(n)})\). 
	Modulus of convergence for \(x\) gives modulus of convergence for \(f(x)\) since \(f\) preserves effective limes.
	\end{proof}

	\begin{Kor}
	The axioms for computability structures gives effectivity of the relevant operations (linear combinations, effective limits and norm).
	\end{Kor}
	\begin{proof}
	For \Lin we choose \(\Y = \X\) or \(\Y = \X \times \X\) and get that \(+\), \(-\) and computable scalar multiplication are effective operations. For \Norm we choose \(\Y = \R\). 
	Norms preserve effective limits since \(| \|x|-\|y\| | \leq \|x-y\|\), so the norm is a effective operation.

	For \Lim we need a numbering \(\nu\) of \(\XNk\), and it is natural to let \(\nu e = \{ \alpha \varphi_e(n) \}_{n \in \N}\). 
	For lim\(^*\) to be effective then means that \(\lim^*_n \alpha \varphi_e(n) = \alpha f(e)\) where \(f \in \mc{R}\). 
	We define \(f\) implicitly through the following algorithm: for each \(n\) we have geometric convergence, and we have also geometric convergence in \(e\) for the limits in \(n\). 
	Pythagoras gives that the distance along the diagonal becomes \(\sqrt{2} \cdot 2^{-n} < 2^{-n+1}\) so a translation of the diagonal gives geometric convergence.
	\end{proof}

	The corollary gives that the axiomatic and ``numbering theoretic'' approaches to effective Banach spaces are equivalent. 
	We can therefore hereafter use the axioms for computability structures and effectivity of the various operations interchangeably.

\section{Numbering of \(\Xk\)}

	We present numberings of \(\Xk\), where \(\X\) is a separable Banach space.

	Given an e.g.s. \(\{ e_n \}\) we obtain a numbering from Effective Density, by putting together indices for modulus, the sum limit and the summand.
	In \cite{Pour} they start with computable sequences, but we will count individual elements. 
	We call a sequence \(\nu\)-computable if it can be written in the form \(\{ \nu f (e) \}_{e \in \N}\), where \(f \in \mc{R}\).

	We number the set \(A = \{ a_n \}\) of rational linear combinations from \(\{ e_n \}\), in such a way that we can recover \(\{ e_n \}\) as \(\{ a_{g(n)} \}\) where \(g \in \mc{R}\).

	The following numbering we call the standard numbering. In \cite{Blanck} the countable dense subset \(\{ a_n \}\) is fundamental whereas we in Banach spaces can generate \(\{ a_n \}\) from an e.g.s.

	\begin{Def}[Standard indices]

	Let \(e \in \op{dom}\alpha\) if:

	\begin{description}
	\item[ (i)] \(\varphi_e \in \mc{R}\)
	\item[(ii)] \((\forall n)(\| a_{\varphi_e(n)} - \lim a_{\varphi_e(\cdot)} \| < 2^{-n})\)
	\end{description}

	Let \((\alpha e)_n = a_{\varphi_e(n)}\) when \(e \in \op{dom} \alpha\).
	\end{Def}

	We call \(\alpha\) the standard numbering, and \(e \in \op{dom} \alpha\) are called standard indices.

	From a natural number \(e\) and the information \(e \in \op{dom} \alpha\) we can thus effectively find arbitrarily good approximations to \(\alpha e\). 
	This fits well with the intuition that e.g. the real number \(\pi = 3.14..\) is computable because given \(n\) we can effectively find the first \(n\) decimals.

	\paragraph{Complexity of dom \(\alpha\)}

	(ii) can be reformulated to a Cauchy-condition such that \(<\) compares rational numbers, that way (ii) becomes \(\Pi_1\). (i) is 
	\(\Pi_2\): \(\varphi_e \in \mc{R} \Hviss \forall n \exists s \varphi_{e,s}(n) \downarrow\). Thus dom \(\alpha\) is \(\Pi_2\).

	\paragraph{Complexity of equality}

	In the general domain theoretic case equality of computable elements is \(\Pi_2\). For example, equality of r.e. sets.
	But in the case of separable metric spaces equality is actually \(\Pi_1\): \(\alpha e = \alpha d \Hviss \| \alpha e - \alpha d \| = 0\) and equality of real numbers is \(\Pi_1\).

	\paragraph{Modulus of convergence}
	If we have a computable sequence \(\{x_n \}\) that has a subsequence \(\{x_{f(n)} \}\) (where \(f\) is recursive) that is geometrically convergent, 
	then we say that \(\{x_n \}\) is effectively convergent, and that \(f\) is a modulus of convergence for \(\{ x_n \}\).

\section{Numbering of \(\BXk\)}

	\begin{Def}[The numbering \(\tau\) of \(\BXk\)]
		\((\tau d)(\alpha e) = \alpha \varphi_d e\)
		if \(\varphi_d\) is
		\begin{itemize}
		\item \(\alpha e = \alpha c \saa \alpha \varphi_d e = \alpha \varphi_d c\) (\(\varphi_d\) are extensional)
		\item \(\varphi_d \in \mc{R}\)
		\item \(\varphi_d[\op{dom} \alpha] \delm \op{dom} \alpha\)
		\item \(\tau d\) becomes linear
		\item \(\tau d\) becomes bounded
		\end{itemize}

		Both extensionality and linearity are \(\Pi_1\) relative to equality \(\{ n,m | \alpha(n)=\alpha(m) \}\) and thus \(\Pi_1\).

		Boundedness, which is the same as continuity for linear operators, is \(\Sigma_2\):

		 \((\exists C)(\forall x)(\frac{\|Tx\|}{\|x\|} \leq C)\).
		The sets \(\{ e | \varphi_e \in \mc{R} \}\) and \(\op{dom} \alpha\) are both \(\Pi_2\).
	\end{Def}

	We introduce the notation \(T^e = \tau e\) to give some analysis intuition for the fact that the number \(e\) is assigned to an operator. 
	Then we may still use the notation \(\{ T_n \}\) for a sequence of operators, independent of numbering. 
	A sequence of effective operators can then be written as \(\{ T^{f(n)} \}\) and if \(f\) is recursive we call the sequence computable.

	\begin{Def}[The numbering \(\mu\) of \(\BXk\)]
		Let \({e_n = a_{g(n)}}\) be a e.g.s. and a computable sequence, i.e. \(g \in \mc{R}\).

		Let \(\mu c\) be the unique continuous extension \(T\) of \(e_n \mapsto \alpha \varphi_c(n)\), if
		\begin{itemize}
			\item \(\varphi_c \in \mc{R}\)
			\item \(T\) becomes linear
			\item \(T\) becomes bounded
		\end{itemize}
	\end{Def}

	Here the following concept is interesting.

	\begin{Def}[Schauder basis]
	A Schauder basis for a separable Banach space \(\X\) is a sequence \((x_n)\) such that for each \(x \in X\) there is a unique sequence \((\alpha_n)\) such that \(x = \sum_n \alpha_n x_n\).
	\end{Def}

	I.e. elements in a Schauder basis are linearly independent and have dense linear span.

	\begin{Prop}[\cite{Enflo}]
		There exist separable Banach space that have no Schauder basis.
	\end{Prop}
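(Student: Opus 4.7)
The plan is to route the argument through the \emph{approximation property} (AP): a Banach space \(\X\) has AP if for every compact \(K \delm \X\) and every \(\varepsilon > 0\) there is a finite-rank \(T \in \B(\X)\) with \(\sup_{x \in K}\|Tx-x\| < \varepsilon\). I would prove (a) every separable Banach space with a Schauder basis has AP, and (b) some separable Banach space fails AP. The contrapositive of (a) combined with (b) yields the proposition.

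For (a), let \((x_n)\) be a Schauder basis with biorthogonal coordinate functionals \((x_n^*)\). The partial-sum projections \(P_N x = \sum_{n=1}^N x_n^*(x)\, x_n\) are continuous linear and converge pointwise to the identity, so by Banach--Steinhaus \(C := \sup_N \|P_N\| < \infty\). Given compact \(K\) and \(\varepsilon > 0\), cover \(K\) by finitely many balls of radius \(\varepsilon/(3C+3)\) about points \(y_1,\ldots,y_k \in K\), choose \(N\) so large that \(\|P_N y_i - y_i\| < \varepsilon/3\) for every \(i\), and use a three-\(\varepsilon\) estimate to get \(\|P_N x - x\| < \varepsilon\) uniformly on \(K\); each \(P_N\) is finite-rank, so AP holds.

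The substance lies in (b) and is the main obstacle. I would follow either Enflo's original construction or the later streamlining by Davie. In outline one builds a nested sequence \(E_1 \delm E_2 \delm \cdots\) of finite-dimensional normed spaces whose inductive-limit completion is a separable Banach space \(\X\), together with a compact set \(K \delm \X\) assembled from normalized vectors in the \(E_n\), in such a way that any finite-rank \(T \in \B(\X)\) with \(\|Tx-x\|<\varepsilon\) on \(K\) would force a trace-like quantity associated with \(E_n\) to be simultaneously close to \(\dim E_n\) and of order \(o(\dim E_n)\) for infinitely many \(n\) --- a contradiction. Davie's concrete realization selects random \(\pm 1\) signs on a sparse Paley-type set inside \(\ell_p\) (\(p \neq 2\)) and uses expectation estimates on Rademacher sums to show that the resulting subspace fails AP with positive probability, hence for some fixed choice of signs.

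The hard part throughout (b) is compatibility: the norms on the \(E_n\) must glue into a single Banach norm on \(\X\), the selected vectors must genuinely form a compact set, and the operator-norm lower bounds must be uniform enough to detect badness of \emph{every} low-rank operator, not just one at a time. The combinatorial and probabilistic bookkeeping that accomplishes this is exactly what made Enflo's 1973 paper a breakthrough; I would not expect a short self-contained treatment, and in a thesis one would cite \cite{Enflo} rather than reproduce the construction.
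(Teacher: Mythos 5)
Your proposal is correct and follows exactly the route the paper itself indicates: the paper gives no proof of this Proposition, but later states that Enflo constructed a separable space failing the approximation property and that the nonexistence of a Schauder basis is a corollary, which is precisely your decomposition into (a) basis implies AP and (b) Enflo's counterexample to AP. Your sketch of (a) via the uniformly bounded partial-sum projections and a three-\(\varepsilon\) argument on compact sets is the standard and correct argument, and deferring (b) to \cite{Enflo} is what the paper does as well.
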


	If we limit ourselves to spaces with Schauder basis we can require that each e.g.s. be linearly independent, and then \(\mu c\) will automatically become linear. 
	We choose to stay within the framework of general Banach spaces. For general Banach space we have among other reasons the function space construction \(\X \mapsto \B(\X)\).

	 We would like to prove that \(\mu\) and \(\tau\) are counting the same set.
	For this we need some technical lemmas on effective limits.

	\begin{Lemma}
		Bounded operators preserves effective limits. \(T \lim^* = \lim^* T\).
	\end{Lemma}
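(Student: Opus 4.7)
The plan is to transfer a modulus of convergence for $(x_n)$ directly to $(Tx_n)$ using linearity and the bound $\|T\|<\infty$. Suppose $\lim^*_n x_n = x$, so by the previous paragraph on modulus of convergence there is a recursive $f$ with $\|x_{f(n)} - x\| \leq 2^{-n}$ for all $n$ (I would first reduce to this form by passing from $(x_n)$ to the geometrically convergent subsequence witnessed by $f$).

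Next I would exploit boundedness: pick an integer $C$ with $\|T\| \leq C$ and let $k = \lceil \log_2 C \rceil$, so $C \leq 2^k$. By linearity,
\[
	\|Tx_{f(n+k)} - Tx\| = \|T(x_{f(n+k)} - x)\| \leq \|T\| \cdot \|x_{f(n+k)} - x\| \leq C \cdot 2^{-(n+k)} \leq 2^{-n}.
\]
Define the recursive function $h(n) = f(n+k)$; then $(Tx_{h(n)})$ converges geometrically to $Tx$, so $h$ is a modulus of convergence for $(Tx_n)$. Since $T$ is in particular continuous, the ordinary limit satisfies $\lim_n T x_n = T x = T\lim_n x_n$, and combined with the existence of $h$ this yields $\lim^*_n T x_n = T \lim^*_n x_n$, which is the claimed identity $T \lim^* = \lim^* T$.

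The only subtle point is step two: the integer $C$ (equivalently the shift $k$) is produced from the existence of $\|T\|$, not from any effective information on $T$. Since the lemma is stated for \emph{bounded} and not \emph{effective} operators, this is harmless---we only need the modulus $h$ to exist as a recursive function, not to be computable uniformly in an index for $T$. Any refinement claiming that $h$ can be extracted effectively from $\tau$- or $\mu$-indices would require a computable upper bound on the operator norm, which, as the abstract already flags in connection with $\B(\X)$, is precisely what can fail in general; this is the genuine obstacle lurking behind the simple calculation above, but it does not affect the present statement.
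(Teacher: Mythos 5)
Your proof is correct and follows essentially the same route as the paper: use boundedness to get an integer $k$ with $\|T\|\leq 2^k$ and shift the modulus of convergence by $k$, with continuity handling the identification of the ordinary limit. Your closing remark about the non-uniformity of $k$ in an index for $T$ is a sound observation (and consistent with the paper's later results on the ineffectivity of the operator norm), but it is not needed for the statement as given.
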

	\begin{proof}
		\(T\) preserves \(\lim\) because \(T\) is bounded by the {\em First Main Theorem}. We must show that \(T\) preserves effectivity of convergence:
		Suppose \(x_n \rightarrow x\) effectively, i.e. there exists a recursive \(f\) such that
		\[
			n \geq f(N) \implies \| x_n - x \| < 2^{-N}
		\]
		Let \(h(N)=f(K+N-1)\), where \(2^{-K} < \frac{1}{\| T \|} < 2^{-(K-1)}\). Then have vi
		\[
			\| Tx_n - Tx \| = \|T(x_n-x)\| \leq \| T \| \| x_n - x \| < 2^{-N}
		\]
		i.e. \(Tx_n \rightarrow Tx\) effectively.
	\end{proof}

	\begin{Lemma}
		\[
		 	\lim^{*}(\alpha x_n) = \alpha \lim^{*} x_n
		\]
	\end{Lemma}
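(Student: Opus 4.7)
The plan is to unpack each side and verify they pick out the same element of \(\Xk\). On the left, \(\lim^{*}(\alpha x_n)\) denotes the Banach-space effective limit of the computable sequence \((\alpha x_n) \in \XNk\), which exists by \Lim. On the right, \(\lim^{*} x_n\) must denote a natural number index produced uniformly recursively from the input data, namely from a recursive function enumerating \(n \mapsto x_n\) together with a recursive modulus of convergence; the content of the lemma is that applying \(\alpha\) to this index yields the same element.

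The construction of the index is a diagonal argument analogous to the one in the corollary on effectivity of \Lim above. Given a recursive modulus \(f\) witnessing \(\|\alpha x_n - \alpha x_m\| < 2^{-(N+1)}\) whenever \(n,m \geq f(N)\), I would set \(\psi(n) = \varphi_{x_{f(n+1)}}(n+1)\). Since \(x_{f(n+1)} \in \op{dom}\alpha\) forces each \(\varphi_{x_{f(n+1)}}\) to be total on the relevant arguments, and since \(f\) and \(n \mapsto x_n\) are recursive, \(\psi\) is itself recursive, uniformly in indices for the data. By the triangle inequality together with the standard-numbering bound \(\|a_{\varphi_{x_{f(n+1)}}(n+1)} - \alpha x_{f(n+1)}\| < 2^{-(n+1)}\),
\[
   \| a_{\psi(n)} - \textstyle\lim_m \alpha x_m \| \;<\; 2^{-(n+1)} + 2^{-(n+1)} \;=\; 2^{-n},
\]
so any \(e\) with \(\varphi_e = \psi\) satisfies \(e \in \op{dom}\alpha\) and \(\alpha e = \lim_m \alpha x_m\). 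Declaring this \(e\) to be the value of \(\lim^{*} x_n\), the two sides of the lemma coincide.

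The only real point of substance is the uniform recursiveness of the index-level procedure \(n \mapsto \psi(n)\), which is precisely what makes \(\lim^{*}\) an \emph{effective} operation in the sense of the definition of effective Banach space; this was essentially already handled in the corollary when one rewrites a two-dimensional geometric approximation as a single diagonal. The main bookkeeping hurdle, and the place to be careful, is the off-by-one handling of the modulus so that the two \(2^{-(n+1)}\) error contributions genuinely sum to a \(2^{-n}\) bound, matching the standard-index condition (ii) on the nose.
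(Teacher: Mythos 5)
You have proved a true and relevant fact, but not the one this lemma asserts. In this lemma the symbol \(\alpha\) is not the standard numbering of \(\Xk\): it is a (computable) scalar. The lemma sits in a list of algebraic commutation rules for \(\lim^*\) --- with bounded operators, with scalars, with sums, with iterated limits --- that are invoked in the subsequent equivalence theorem for \(\mu\) and \(\tau\) precisely at the step where the rational coefficients \(\alpha_{kj}\) are pulled through \(\lim^*_l\). The paper's own proof makes this unmistakable: it says ``as above, but choose \(K\) such that \(2^{-K} < \frac{1}{\alpha} < 2^{-(K-1)}\)'', i.e.\ it reruns the preceding argument for a bounded operator \(T\) with \(\|T\|\) replaced by the scalar \(\alpha\) (multiplication by \(\alpha\) being a bounded operator of norm \(|\alpha|\)). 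The intended content is: if \(x_n \saa x\) effectively with recursive modulus \(f\), then \(\alpha x_n \saa \alpha x\) effectively with modulus \(N \mapsto f(N+K)\), because \(\|\alpha x_n - \alpha x\| = |\alpha|\,\|x_n - x\|\). Nothing about indices or the numbering enters.

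What you proved instead --- that from an index for a computable, effectively convergent sequence one can uniformly compute a standard index for its limit, by diagonalizing the modulus against the approximations --- is the effectivity of \(\lim^*:\XNk \saa \Xk\), which the paper already established in the corollary on the \Lim and which you yourself note is ``essentially already handled'' there. So as a proof of the stated lemma this is a misreading of the statement rather than a computational error; your diagonal construction itself is sound, modulo the off-by-one bookkeeping with the modulus that you correctly flag. To repair it, discard the index-level machinery entirely and simply observe that \(x \mapsto \alpha x\) is a bounded operator, so the previous lemma applies with \(\|T\| = |\alpha|\); the only point worth adding is the trivial separate case \(\alpha = 0\) and the replacement of \(\alpha\) by \(|\alpha|\) for complex scalars.
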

	\begin{proof}
		As above, but choose \(K\) such that \(2^{-K} < \frac{1}{\alpha} < 2^{-(K-1)}\).
	\end{proof}

	\begin{Lemma} 
		\(\lim^{*}(x_n + y_n) = \lim^{*}x_n + \lim^{*}y_n\) if the right hand side exists.
	\end{Lemma}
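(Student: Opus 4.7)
The plan is to reduce this to the classical fact that limits in a Banach space respect addition, and then verify that the modulus of convergence can be constructed effectively from moduli for the two summands. The existence of the right hand side means by definition that both $\lim^* x_n$ and $\lim^* y_n$ exist, i.e., there are recursive functions $f$ and $g$ serving as moduli of convergence for $(x_n)$ and $(y_n)$ respectively, so that $\|x_n - x\| < 2^{-N}$ whenever $n \geq f(N)$, and similarly for $g$.

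First I would apply the triangle inequality in the form
\[
\|(x_n + y_n) - (x + y)\| \leq \|x_n - x\| + \|y_n - y\|,
\]
which is the classical ingredient and already shows ordinary convergence $x_n + y_n \to x + y$. To upgrade this to effective convergence, I would define $h(N) = \max(f(N+1), g(N+1))$, which is recursive since $f$ and $g$ are. For $n \geq h(N)$ both summands on the right are bounded by $2^{-(N+1)}$, hence their sum is bounded by $2^{-N}$, giving $h$ as a modulus of convergence for $(x_n + y_n)$.

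Finally I would note that the sequence $(x_n + y_n)$ itself is computable by the \Lin-axiom (or equivalently by $\nu$-effectivity of $+$ from the definition of an effective Banach space), so the effective limit $\lim^*(x_n + y_n)$ is defined and equals $x + y = \lim^* x_n + \lim^* y_n$. No part of this argument presents a real obstacle; the only thing to be careful about is the effectivity of $h$, which is immediate from the recursiveness of $\max$, $f$, and $g$ and the closure of $\mc{R}$ under composition.
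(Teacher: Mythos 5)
Your proof is correct and follows essentially the same route as the paper: both use the triangle inequality together with the modulus \(h(N)=\max\{f(N+1),g(N+1)\}\) to obtain effective convergence of the sum. The extra remark about computability of \((x_n+y_n)\) via the Line{\ae}rkombinasjon-aksiomet is a harmless addition that the paper leaves implicit.
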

	\begin{proof}
		Assume the right hand side exists, i.e. there exist recursive \(f\) and \(g\) such that
		\[
			n \geq f(N) \implies \| x_n - x \| < 2^{-N}
		\]
		\[
			n \geq g(N) \implies \| y_n - y \| < 2^{-N}
		\]
		Let \(h(N)=\op{max}\{f(N+1), g(N+1)\}\). Then we have
		\[
			n \geq h(N) \implies \|(x_n +y_n) - (x+y)\| \leq \|x_n-x\| + \|y_n-y\| < 2^{-(N+1)} + 2^{-(N+1)} = 2^{-N}
		\]
	\end{proof}
	
	\begin{Lemma}
		\( \lim^{*}_k \lim^{*}_l x_{kl} = \lim^{*}_k x_{k,f(k)} \), where \(f(k)=\mu l (|x_{kl} - x_k | < 2^{-k}\).
	\end{Lemma}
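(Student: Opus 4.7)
My plan is to reduce the equality to a standard triangle-inequality combination of two moduli of convergence, and then address the (mildly subtle) point that the function $f$ defined by a $\mu$-operator on a real-valued inequality really is recursive in this setting.

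First I would let $x_k = \lim^*_l x_{kl}$ and $x = \lim^*_k x_k$, and fix a recursive modulus $g$ for $x_k \to x$, so that $k \geq g(N)$ implies $\|x_k - x\| < 2^{-N}$. By the defining property of $f$, we have $\|x_{k,f(k)} - x_k\| < 2^{-k}$. Then by the triangle inequality,
\[
	\|x_{k,f(k)} - x\| \leq \|x_{k,f(k)} - x_k\| + \|x_k - x\| < 2^{-k} + \|x_k - x\|.
\]
Setting $h(N) = \max\{N+1, g(N+1)\}$ yields $\|x_{k,f(k)} - x\| < 2^{-(N+1)} + 2^{-(N+1)} = 2^{-N}$ whenever $k \geq h(N)$, which is exactly effective convergence of $\{x_{k,f(k)}\}_k$ to $x$. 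Together with sequential computability of $\{x_{k,f(k)}\}$ (obtained from the computable double sequence $\{x_{kl}\}$ by the composition property applied to the pair $k \mapsto \langle k, f(k)\rangle$), this gives $\lim^*_k x_{k,f(k)} = x$.

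The step that actually needs care, and which I expect to be the main obstacle, is the claim that $f(k) = \mu l\, (\|x_{kl}-x_k\| < 2^{-k})$ is recursive, since the predicate being minimized involves a strict inequality between real numbers, which is only $\Sigma_1$ and not decidable in general. The fix is to observe that we do not actually need the \emph{least} such $l$: any recursive selector will do, and the hypothesis that $\lim^*_l x_{kl} = x_k$ effectively in $l$ (uniformly in $k$) supplies one. Concretely, a modulus $m(k,N)$ for this inner convergence is recursive by assumption, and $\tilde f(k) := m(k,k)$ satisfies $\|x_{k,\tilde f(k)} - x_k\| < 2^{-k}$; since the argument above only used this inequality and not minimality, the same proof goes through with $\tilde f$ in place of $f$.

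Finally I would note that by the preceding lemma on sums and the Linear Combination axiom, $\{x_{k,\tilde f(k)}\}$ is a computable sequence in $\X$ (as a recursive sub-selection from the computable double sequence), so the effective limit $\lim^*_k x_{k,\tilde f(k)}$ is defined in the sense of the axiomatization, and equals $x = \lim^*_k \lim^*_l x_{kl}$ as required.
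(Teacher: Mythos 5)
Your proof is correct and follows essentially the same route as the paper: the triangle inequality $\|x_{k,f(k)}-x\| \leq \|x_{k,f(k)}-x_k\| + \|x_k-x\| < 2^{-k} + 2^{-(N+1)}$ with the shifted modulus $h(N)=g(N+1)$ (the paper's ``we may assume $g(N)\geq N$'' plays the role of your $\max\{N+1,\cdot\}$). Your additional observation that the $\mu$-operator over a strict real inequality need not yield a recursive $f$, and that any recursive selector such as $\tilde f(k)=m(k,k)$ from the inner modulus suffices, is a sound repair of a point the paper leaves implicit.
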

	\begin{proof}
		Suppose
		\[
			k \geq g(N) \implies \|x_k - x\| < 2^{-N}
		\]
		Let \(h(N)=g(N+1)\). We may assume \(g(N) \geq N\). It follows that
		\[
		 	k \geq h(N) \implies \|x_{k,f(k)}-x\| \leq \|x_{k,f(k)} - x_k\| + \|x_k - x\| < 2^{-k} + 2^{-(N+1)} \leq 2^{-N}
		\]
		since
		\[
		 	k \geq h(N) = g(N+1) \geq N+1
		\]
	\end{proof}
	
	\begin{Lemma}
		\[
			\sum^x_{j=0} \sum^{f(j)}_{i=0} a_{ij} e_i = \sum^{\op{max}_{j=0..x} f(j)}_{i=0} \sum^x_{j=0} a_{ij} \Theta(f(j)-i) e_i
		\]
		where \(\Theta(x)\) (the Heaviside function) is \(1\) for \(x \geq 0\) and \(0\) otherwise.
	\end{Lemma}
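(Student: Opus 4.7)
The plan is to prove the identity by showing that both sides, when fully expanded, are sums over exactly the same multiset of terms $a_{ij}e_i$; this is just a Fubini-style swap of summation order over a triangular-shaped finite index region, with the Heaviside factor accounting for the fact that the region is not a rectangle.

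First I would identify the index set. Let
\[
	S = \{(i,j) \in \N^2 \mid 0 \leq j \leq x,\ 0 \leq i \leq f(j)\}.
\]
The left-hand side sums $a_{ij}e_i$ over $(i,j) \in S$ with $j$ as the outer index. For the right-hand side, note that if $(i,j) \in S$ then $i \leq f(j) \leq \max_{j'=0..x} f(j')$, so $i$ lies in the range of the outer sum on the right; and $j \in \{0,\dots,x\}$ lies in the range of the inner sum; and $\Theta(f(j)-i)=1$ since $i \leq f(j)$. Conversely, for any pair $(i,j)$ with $0 \leq i \leq \max_{j'=0..x} f(j')$ and $0 \leq j \leq x$, the factor $\Theta(f(j)-i)$ vanishes precisely when $i > f(j)$, i.e.\ when $(i,j) \notin S$, so such terms contribute nothing.

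Thus both sides reduce to $\sum_{(i,j) \in S} a_{ij} e_i$, and the identity follows by commutativity and associativity of addition in $\X$.

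There is no real obstacle here: the identity is a bookkeeping reindexing of a finite sum, and the Heaviside function is just an indicator for the condition $i \leq f(j)$ which defines the region $S$. The only thing to be careful about is that the outer bound on the right, $\max_{j=0..x} f(j)$, is large enough to contain every relevant $i$, which is immediate from the definition of the maximum.
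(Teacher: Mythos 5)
Your argument is correct and is exactly the reindexing the paper has in mind: the paper's entire proof is ``This is easily seen, e.g.\ by drawing a diagram,'' and your index-set $S$ with the Heaviside factor as the indicator of $i \leq f(j)$ is precisely the formalization of that diagram. Nothing further is needed.
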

	This is easily seen, e.g. by drawing a diagram.

	Let \(C\) and \(D\) be subsets of \(\N\). Two partial numberings \(\nu_0:C \saa \Xk\) and \(\nu_1:D \saa \Xk\) are called equivalent if \(\nu_0 \leq \nu_1\) and \(\nu_1 \leq \nu_0\), 
	where the order is given by \(\nu_0 \leq \nu_1 \Hviss (\exists \varphi \in \mc{PR})((\varphi[C] \delm D \snitt \op{dom} \varphi) \og (\forall e \in C)(\nu_0 e = \nu_1 \varphi e))\).

	\begin{Theorem}
	\(\op{ran} \mu = \op{ran} \tau\), and \(\mu\) and \(\tau\) are equivalent numberings of \(\BXk\).
	\end{Theorem}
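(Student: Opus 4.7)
The plan is to exhibit explicit partial recursive translations realizing both $\tau \le \mu$ and $\mu \le \tau$. I would begin by fixing, once and for all, a recursive $\sigma$ with $\alpha\sigma(m)=a_m$, taking $\sigma(m)$ to index the constant sequence of value $m$ (trivially a standard index since $\|a_m - a_m\| = 0 < 2^{-n}$), and recalling that the e.g.s.\ satisfies $e_n = a_{g(n)}$ for some $g \in \mc{R}$.

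For the easier direction $\tau \le \mu$, given a $\tau$-index $d$ I would use the $s$-$m$-$n$ theorem to set $\varphi_{\psi_1(d)}(n) = \varphi_d(\sigma(g(n)))$. Then $\alpha\varphi_{\psi_1(d)}(n) = (\tau d)(a_{g(n)}) = (\tau d)(e_n)$, and since $\tau d$ is already bounded and linear, its unique continuous extension to $\X$ coincides with the extension of $e_n \mapsto \alpha\varphi_{\psi_1(d)}(n)$, whence $\mu\psi_1(d) = \tau d$.

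For $\mu \le \tau$, given a $\mu$-index $c$ with $T = \mu c$, I would construct $\psi_2(c)$ so that $\varphi_{\psi_2(c)}$ sends each standard index $e$ to a standard index of $T(\alpha e)$. Each $a_{\varphi_e(k)}$ is a finite rational combination $\sum_j q_{k,j} e_j$ whose coefficients are effectively readable from $\varphi_e(k)$; by linearity $T(a_{\varphi_e(k)}) = \sum_j q_{k,j}\,\alpha\varphi_c(j)$, and by \Lin this admits a standard index $\tau_1(c,e,k)$ uniformly in its arguments. The sequence $k \mapsto \alpha\tau_1(c,e,k)$ converges to $T(\alpha e)$, and combining the preceding lemma that bounded operators preserve effective limits with the iterated-effective-limit lemma collapses this double-limit representation to a single standard index, which is packaged into $\psi_2$ by the $s$-$m$-$n$ theorem. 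Extensionality, preservation of $\op{dom}\alpha$, linearity, and boundedness of $\tau\psi_2(c)$ are inherited from $T$, and together with $\psi_1$ this yields $\op{ran}\mu = \op{ran}\tau$.

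The main obstacle is the modulus of convergence in the second direction: the bound $\|T(a_{\varphi_e(k)}) - T(\alpha e)\| \le \|T\|\cdot 2^{-k}$ depends on $\|T\|$, and the lemma on bounded operators needed an explicit $K$ with $2^{-K} < 1/\|T\|$. Since $\|T\|$ is only left-computable from $c$ in general, producing such a $K$ uniformly is the delicate point. I would circumvent this within the computation of $\varphi_{\psi_2(c)}(e)$ by exploiting \Norm: the sequence $(\|T(a_{\varphi_e(k)}) - T(a_{\varphi_e(k+1)})\|)_k$ is effectively computable, and one searches for the first $k$ at which it falls below a prescribed threshold. Boundedness of $T$ ensures geometric decay of this sequence, so the search terminates, and a Cauchy-tail estimate then converts the found threshold into the required $2^{-n}$ modulus without any a priori bound on $\|T\|$.
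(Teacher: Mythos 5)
Your direction \(\tau\le\mu\) is correct and is essentially the paper's own argument: the paper composes \(\varphi_c\) with a recursive \(h\) satisfying \(e_n=\alpha h n\), which is exactly your \(\sigma\circ g\). Your direction \(\mu\le\tau\) also follows the paper's route (express \(T\alpha e\) as an iterated effective limit of rational combinations of the \(\alpha\varphi_c(j)\), then collapse it using the effective-limit lemmas), and you are right to single out the modulus as the delicate point: the lemma that bounded operators preserve effective limits consumes a constant \(K\) with \(2^{K}>\|T\|\), and the paper itself later proves that no recursive function majorizes \(\|T^e\|\) from an index, so no translation procedure can begin by computing such a \(K\). For the set equality \(\op{ran}\mu=\op{ran}\tau\) this is harmless: \(T\) is fixed, so \(K\) may be hard-coded into the \(\tau\)-index, which is all the paper's lemma actually does.

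However, your proposed uniform fix does not work. Finding the first \(k\) at which \(\|Ta_{\varphi_e(k)}-Ta_{\varphi_e(k+1)}\|\) falls below a threshold gives no control over the tail: a standard index is allowed to repeat terms, so one may have \(a_{\varphi_e(k)}=a_{\varphi_e(k+1)}\), making that single consecutive difference equal to \(0\) while \(Ta_{\varphi_e(k)}\) is still far from \(T\alpha e\) and later differences are as large as \(\|T\|2^{-j+1}\) permits. The Cauchy-tail estimate \(\|Ta_{\varphi_e(k)}-T\alpha e\|\le\sum_{j\ge k}\|Ta_{\varphi_e(j)}-Ta_{\varphi_e(j+1)}\|\) requires bounding every term of the tail, i.e.\ either infinitely many verifications or an a priori bound on \(\|T\|\) --- exactly what is unavailable. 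So your argument does establish \(\op{ran}\mu=\op{ran}\tau\) and \(\tau\le\mu\), but the uniformity of \(\mu\le\tau\) remains open at precisely the point you identified; closing it would require a genuinely different device (for instance a Kreisel--Lacombe--Shoenfield-style construction), and it is worth noting that the paper's own proof passes over the same point silently when it asserts that \(d''\) and \(\alpha''\) are found effectively.
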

	\begin{proof}
		Suppose that \(T \in \op{ran} \mu\) and that \(x\) is a computable element. According to Effective Density we may write
		\[
		 	x = \lim^{*}_k \sum^{d(k)}_{j=0} \alpha_{kj} e_j
		\]
		where \(d\) is recursive, \(\alpha\) is a computable double sequence of rational numbers, and \(\{ e_n \}\) is an e.g.s.
		Furthermore \({T e_j}\) is computable, and we may write
		\[
			T e_j =  \lim^{*}_l \sum^{c(j,l)}_{i=0} \beta_{jli} e_i
		\]
		by Effective Density.

		We get an explicit expression that shows that \(T \in \op{ran} \tau\) by the above lemmas and the following calculation:
		\[
			Tx = T \lim^*_k \sum^{d(k)}_{j=0} \alpha_{kj} e_j = \lim^*_k \sum^{d(k)}_{j=0} \alpha_{kj} Te_j =
			\lim^*_k \sum^{d(k)}_{j=0} \alpha_{kj} \lim^*_l
			\sum^{c(j,l)}_{i=0} 
			\beta_{jli}	e_i	=
		\]
		\[
			\lim^*_k \lim^*_l \sum^{d(k)}_{j=0} \sum^{c(j,l)}_{i=0} \alpha_{kj} \beta_{jli} e_i 
			= \lim^*_k \sum^{d(k)}_{j=0} \sum^{c(j,f(k))}_{i=0} \alpha_{kj} \beta_{j,f(k),i} e_i = 
		\]
		\[
		 	\lim^*_k \sum^{d^{'}(k)}_{j=0}  \alpha_{k,(j)_1} \beta_{(j)_1,f(k),(j)_0}  
			\Theta(c((j)_1,f(k)) - (j)_0 )  \Theta(d(k) - (j)_1) e_{(j)_0} = 
		\]
		\[
		 	(d^{'}(k) = \op{max}_{j=0..d(k)} \op{max} \{ c(j,f(k)), d(k) \}^2) 
		\]
		\[
		 	\lim^{*}_k \sum^{d^{'}(k)}_{j=0} \alpha^{'}_{kj} e_{(j)_0} 
			= \lim^{*}_k \sum^{d^{'}(k)}_{j=0} \alpha^{'}_{kj} \sum^{(j)_0}_{l=0} \delta_{l,(j)_0} e_l 
			= \lim^{*}_k \sum^{d^{''}(k)}_{j=0} \alpha^{''}_{kj} e_j 
		\]
		for suitable \(d^{''}\), \(\alpha^{'}\), and \(\alpha^{''}\), so \(T\) is effective in standard meaning, i.e. 
		\(T \in \op{ran} \tau\). Since \(d^{''}\) and \(\alpha^{''}\) are found effectively from \(\alpha\) and \(d\) we also have \(\mu \leq \tau\).

		Now suppose that \(T = \tau c\). Since \(\{e_n\} = \{ a_{g(n)} = \alpha h n \}\) (\(\exists h \in \mc{R}\)) is a computable sequence, we have \(\{Te_n = \alpha \varphi_c h n \}\). 
		Define \(d\) by \(\varphi_d = \varphi_c h\). Then \(T = \mu d\), and in particular \(\tau \leq \mu\).
	\end{proof}

\section{Quasi-effective operators}

	\begin{Def}[Quasi-indices]
		A quasi-index is a \(\beta\)-index, where the numbering \(\beta\) is defined as follows.

		Let \((\beta \la e,k \ra )_n = a_0\) if \(\varphi_{e,n}(0) \uparrow\).
		Otherwise, let \((\beta \la e,k \ra )_n = a_{\varphi_{e,n}(0)}\) if the following holds:
		\begin{description}
			\item[ (i)] \(\varphi_{e,n}(0) \downarrow\)
			\item[(ii)] \(((n=\mu n(\varphi_{e,n}(0) \downarrow)) \, \eller \, \varphi_{e,n}(1) \uparrow)\)
		\end{description}

		Otherwise, let \((\beta \la e,k \ra )_n = a_{\varphi_{e,n}(m)}\) if the following holds:

		\begin{description}
			\item[  (i)] \(\varphi_{e,n}(m) \downarrow\)
			\item[ (ii)] \(\|a_{\varphi_e(m)}-a_{\varphi_e(m-1)}\| < 2^{-m}k\)
			\item[(iii)] \((n = \mu n (\varphi_{e,n}(m) \downarrow) \eller \varphi_{e,n}(m+1) \uparrow \eller \|a_{\varphi_e(m+1)}-a_{\varphi_e(m)}\| > k \cdot 2^{-(m+1)})\)
		\end{description}

	\end{Def}

	\(F\) is called {\em effective} if it is \(\alpha\)-effective, and {\em quasi-effective} if it is \(\beta\)-effective.

	Intuition: Using quasi-indices we wish to find a totally defined numbering of the computable elements in \(\X\), 
	such that we for each \(e\) and \(k\) can know that \(\beta \la e,k \ra\) is a computable real number, 
	and effectively find a sequence \((\beta e)_n\) that converges geometrically (up to a factor \(k\)) to \(\beta e\). 
	It appears that \(\beta\) is among the most natural numberings we then can choose; if \(e \in \op{dom} \alpha\) we get \(\alpha e = \beta e\). 
	If \(\varphi_e\) is not totally defined we get a problem, and the only way to solve it on appears to be to let the computation of \(\varphi_e(n)\) run indefinitely, 
	so that the sequence \((\beta e)\) becomes constant starting when we begin to attempt to calculate a value of \(\varphi_e\) where it is undefined. 
	Also note that if \(\varphi_e\) is totally undefined (which is quite common) we get the sequence \(a_0, a_0, ..\). 
	In place of \(a_0\) we could have used an arbitrary element of \(A\).

	Let the set of all {\em quasi-sequences} be \(\beta[\N]\).

	\begin{Lemma}
		\label{Lemma:konstant}
		Constant subsequences of a quasi-sequence having limit \(\notin A\) have finite length.
	\end{Lemma}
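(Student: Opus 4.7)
The plan is to unfold the definition of $\beta$ and exploit the staircase structure it imposes on any quasi-sequence $q=\beta\langle e,k\rangle$. Reading clauses (i)--(iii), the value $q_n$ at stage $n$ is $a_0$ if no witness has yet appeared, and otherwise $a_{\varphi_e(m)}$ for the largest $m$ such that $\varphi_{e,n}(m)\downarrow$ and the required modulus clauses have been witnessed by stage $n$. Hence $q$ is piecewise constant in $n$: its value is constant on an interval of stages until a new $m$ becomes available, at which point the value jumps from $a_{\varphi_e(m)}$ to $a_{\varphi_e(m+1)}$ and, by the monotonicity of the index $m$ as a function of $n$, never returns to an earlier value.

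With this structure in hand I would argue by contraposition. Suppose some maximal constant block of consecutive terms of $q$ has infinite length, say with common value $a\in A$. Since a maximal block extends to the right precisely until the next jump, an infinite block is a tail: there exists $n_0$ such that $q_n=a$ for every $n\geq n_0$. Then $\lim_n q_n=a\in A$, contradicting the hypothesis that $\lim q\notin A$.

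The main obstacle is reading clauses (i)--(iii) precisely enough to justify the staircase picture. In particular, the $\mu n$ in clause (iii) pins down the unique stage at which the new value $a_{\varphi_e(m)}$ first appears, while the alternatives ``$\varphi_{e,n}(m+1)\uparrow$ or $\|a_{\varphi_e(m+1)}-a_{\varphi_e(m)}\|>k\cdot 2^{-(m+1)}$'' ensure that $q_n$ retains the value $a_{\varphi_e(m)}$ at intermediate stages until the next qualifying witness appears. Once this bookkeeping is made explicit, the conclusion of the lemma is immediate.
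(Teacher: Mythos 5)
Your proof is correct and takes essentially the same route as the paper's: by contraposition, an infinite constant run of consecutive terms forces the quasi-sequence to be eventually constant, so its limit is that common value, which lies in \(A\). The paper's own proof is just the terse two-sentence version of this; your extra unpacking of the staircase structure of \(\beta\) merely justifies the same step in more detail.
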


	\begin{proof}
		The elements of a quasi-sequence are in \(A\). Sequences with constant subsequences of infinite length are constant except for a finite initial segment, 
		and therefore have limit in \(A\).
	\end{proof}

	\begin{Theorem}
		For each effective Banach space (that is not zero-dimensional) there exist bounded linear operators that is not quasi-effective.
		 \label{Thm:sqrt}
	\end{Theorem}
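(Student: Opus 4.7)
I will take $T \colon \X \to \X$ defined by $T(x) = \sqrt{2}\,x$ as the witness, as advertised in the abstract. Boundedness ($\|T\| = \sqrt{2}$) and linearity are immediate, so the whole content is to show $T$ is not quasi-effective. Using the non-zero-dimensionality of $\X$, I would fix a vector $a$ in the e.g.s.\ with $\|a\|=1$, so that $a \in A$ and $\sqrt{2}\,a \notin A$ (since $A$ consists of $\Q$- or $(\Q + i\Q)$-linear combinations of the $e_n$). This is the source of the difficulty: by Lemma~\ref{Lemma:konstant}, every $\beta$-index for $\sqrt{2}\,a$ must correspond to a quasi-sequence without an infinite constant subsequence, whereas many $\beta$-indices for points of $A$, including $a_0$ itself, arise from eventually-constant quasi-sequences (typically because $\varphi_e$ eventually halts on the same value or stays undefined).

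Assuming for contradiction that $T$ is quasi-effective, with a total recursive witness $F$ satisfying $\beta(F(e)) = \sqrt{2}\,\beta(e)$ for all $e$, I would apply the Recursion Theorem to produce a self-referential index $e^*$ whose program $\varphi_{e^*}$ examines the quasi-sequence $\beta(F\langle e^*, 1\rangle)_n$ step by step. Decoding $F\langle e^*, 1\rangle = \langle e', k'\rangle$ (which $\varphi_{e^*}$ can do since $F$ is total recursive) gives the geometric factor $k'$, and hence a computable threshold $M = \lceil \log_2(4k'/(\sqrt{2}\,\|a\|))\rceil$ past which any observed approximation lies within $\sqrt{2}\,\|a\|/4$ of the true limit. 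The program $\varphi_{e^*}(0)$ waits for the first $n$ at which $\varphi_{e',n}(M)\!\downarrow$, and then branches: if $\|(\beta F\langle e^*,1\rangle)_n\| > \sqrt{2}\,\|a\|/2$ it outputs the index of $a$, while if that inequality fails it outputs the index of $a_0$; if $\varphi_{e'}(M)$ never converges, $\varphi_{e^*}(0)$ diverges.

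The contradiction would then be extracted by showing that neither branch is compatible with the equation $\beta(F(e^*)) = \sqrt{2}\,\beta(e^*)$. In the "halt with index of $a$" branch the quasi-effectivity equation forces $\beta(F\langle e^*, 1\rangle) = \sqrt{2}\,a$, which is consistent with the observed norm estimate; but feeding this back into the self-referential construction together with Lemma~\ref{Lemma:konstant} (the output quasi-sequence cannot be eventually constant) gives a constraint that collides with the "halt with index of $a_0$" alternative at indices where $\varphi_{e'}(M)$ in fact fails to converge, since such indices must produce a stable value in $A$, ruling out the limit $\sqrt{2}\,a$.

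The main obstacle, as I see it, is making the diagonalization close tightly. Because $\beta$ is a total numbering, $F$ enjoys considerable slack: the geometric factor $k'$ is under $F$'s control and lets the quasi-sequence wander over a ball of radius $\sim k'$ before settling, so the threshold $M$ must be read off from $F\langle e^*, 1\rangle$ itself and used inside the recursion. Carrying out this coupling cleanly, so that no choice of $F$ can satisfy the constraints coming from both branches of the fixed point, is the heart of the argument; Lemma~\ref{Lemma:konstant} is the decisive structural ingredient because it forbids the "default" constant-$a_0$ representation precisely for the irrational limit $\sqrt{2}\,a$ that $F$ is required to name uniformly.
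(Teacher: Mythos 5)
Your choice of witness (\(x \mapsto \sqrt{2}\,x\)) and of the decisive structural fact (Lemma~\ref{Lemma:konstant}) agree with the paper, but your route is different in kind: the paper does not use the Recursion Theorem at all. It reduces the halting problem, building uniformly in \(x\) a quasi-sequence \(\zeta^x\) whose limit is \(a_0\) or \(2a_0\) according as \(\varphi_x(x)\) diverges or halts, then applying the hypothetical witness for \(x\mapsto\sqrt{2}x\) and using Lemma~\ref{Lemma:konstant} to decide effectively which of \(\sqrt{2}a_0\), \(2\sqrt{2}a_0\) (both outside \(A\), hence with a genuine modulus) is the image. A fixed-point diagonalization could in principle replace this, but yours does not yet close.

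The concrete gap is in the branching of \(\varphi_{e^*}\): your program outputs the index of \(a\) precisely when it observes a large norm, i.e.\ it \emph{confirms} what the observation predicts instead of contradicting it. If the level-\(M\) approximation has norm \(> \sqrt{2}\|a\|/2\) and the program therefore makes \(\beta\la e^*,1\ra = a\), quasi-effectivity forces \(\beta F\la e^*,1\ra = \sqrt{2}a\), whose approximations do have norm \(> \sqrt{2}\|a\|/2\) by your own threshold computation; this branch is self-consistent and yields no contradiction, and the sentence about ``feeding this back into the self-referential construction'' does not supply one. A diagonalization must do the opposite: on observing an approximation near \(\sqrt{2}a\), output \(a_0\); on observing one near \(\sqrt{2}a_0\), or on stalling, output \(a\); then every branch collides with the modulus that Lemma~\ref{Lemma:konstant} guarantees for the actual limit. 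Two further loose ends: you need the default element \(a_0\) to be nonzero, since otherwise \(\sqrt{2}a_0 = 0 \in A\) and Lemma~\ref{Lemma:konstant} gives nothing in the divergence branch; and a bare norm threshold at \(\sqrt{2}\|a\|/2\) does not separate \(\sqrt{2}a\) from \(\sqrt{2}a_0\) for an arbitrary \(a_0\) --- compare the observed approximation's distances to the two candidate limits, or take \(a = 2a_0\) as the paper does and place the threshold at the midpoint \(\tfrac{3}{2}\sqrt{2}\,\|a_0\|\).
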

	\begin{proof}
		For \(x \in \Xk\), let \((x)_n\) be a quasi-sequence with limit \(x\).

		Choose \(e\) such that \(\varphi_e\) is calculated by the following algorithm:
		\[
			a_{\varphi_e(x,n)} = 
			\begin{Cases}
				a_0 & n=0 \\
				2a_0 \cdot \frac{\varphi_x(x)}{\varphi_x(x)} & n \geq 1 \\
			\end{Cases}
		\]

		Let \(\zeta^x = \beta \la S^1_1(e,x), 1 \ra\), such that \(\zeta^x\) is a quasi-sequence uniformly in \(x\).
		The it follows that
		\[
			\lim \zeta^x = \begin{Cases}
		                  a_0  & \varphi_x(x) \uparrow \\
		                  2a_0 & \varphi_x(x) \downarrow \\
		                  \end{Cases}
		\]

		Let \(F\) be the effective operator \(x \mapsto \sqrt{2}x\). Suppose that \(F\) is quasi-effective.

		Since \(F \lim \zeta^x \in \{ \sqrt{2} a_0, 2 \sqrt{2} a_0 \} \delm \Xk \setminus A\), by Lemma~\ref{Lemma:konstant} we can choose \(n\) so large that the following holds:
		\begin{description}
		\item[ (i)] \(\|(\sqrt{2} a_0)_n - (2 \sqrt{2} a_0)_n       \| > 2^{-(n-1)}\)
		\item[(ii)] \(\|(F \lim \zeta^x)_n - F \lim \zeta^x         \| < 2^{-n}\)
		\end{description}

		But then we can solve the halting problem, since
		\[
			\|(F \lim \zeta^x)_n - (\sqrt{2} a_0 )_n\| < 2^{-n} \Hviss \varphi_x(x) \uparrow
		\]
	\end{proof}

	Linearity has a decisive influence on existence of effective operators that is not quasi-effective. A non-linear example on \(\Rk\) is \(x \mapsto x + \sqrt{2}\).

	Lemma ~\ref{Lemma:konstant} also holds for \(\alpha\)-indices.

	The decisive difference between \(\alpha\) and \(\beta\) is that we cannot find an \(\alpha\)-index for \(\zeta^x\) uniformly in \(x\).

	\paragraph{Quasi-effectivity and r.e.-indices}
	Quasi-numbering of \(\Xk\) corresponds to a numbering of the recursive sets using r.e.-indices.

	Standard-numbering of \(\Xk\) corresponds to a numbering of the recursive sets using characteristic indices, 
	i.e. indices for characteristic functions \(c_A\), \(A\) recursive.
	In this way we can also express the intuition that a totally undefined function \(\varphi_e\) is a kind of computable object: 
	we can effectively in \(n\) decide what \(\varphi_e(n)\) is (namely undefined, ``\(\uparrow\)''). 
	I.e. that the graph of \(\varphi_e\), \(G_{\varphi_e} =\emptyset\). Thus \(c_{G_{\varphi_e}}\), the characteristic function of the graph, is recursive.

	\begin{Lemma}
	There exists a total computable numbering of the recursive sets in following sense: 
	There exists a recursive function \(f\) such that for all \(e\), \(f(e)\) is an r.e.-index for a recursive set, and all recursive sets have such an index.
	\end{Lemma}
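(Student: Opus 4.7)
The plan is to use the \(s\)-\(m\)-\(n\) theorem to define \(f\) so that \(W_{f(e)}\) either equals the recursive set with characteristic function \(\varphi_e\) (when \(\varphi_e\) happens to be a total \(\{0,1\}\)-valued function) or is a finite set (when \(\varphi_e\) fails to be such). In both cases \(W_{f(e)}\) is recursive, and every recursive set \(A\) will be obtained by feeding \(f\) any index \(e\) for \(c_A\).

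Concretely, I would describe the enumeration of \(W_{f(e)}\) as follows. At stage \(s\), simulate \(\varphi_{e,s}(n)\) for all \(n\leq s\) and let \(m_s\) be the largest \(m\leq s\) such that \(\varphi_{e,s}(n)\downarrow\) with value in \(\{0,1\}\) for every \(n\leq m\) (and \(m_s=-1\) if there is no such \(m\)). Then enumerate into \(W_{f(e)}\) every \(n\leq m_s\) with \(\varphi_{e,s}(n)=1\). The \(s\)-\(m\)-\(n\) theorem produces a recursive \(f\) implementing this.

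Verification splits into two cases. If \(\varphi_e\) is total and \(\{0,1\}\)-valued, then \(m_s\to\infty\) and \(W_{f(e)}=\{n:\varphi_e(n)=1\}\), a recursive set whose characteristic function is \(\varphi_e\) itself. Otherwise let \(k\) be the least \(n\) for which \(\varphi_e(n)\uparrow\) or \(\varphi_e(n)\notin\{0,1\}\); then \(m_s<k\) for every \(s\), so \(W_{f(e)}\subseteq\{0,\dots,k-1\}\) is finite and hence recursive. Surjectivity is immediate: given a recursive set \(A\), take any \(e\) with \(\varphi_e=c_A\); by the first case \(W_{f(e)}=A\).

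The delicate point, and the only thing one really has to be careful about, is the monotone threshold \(m_s\): we must refuse to enumerate any \(n\) until we have \emph{confirmed} that \(\varphi_e\) converges on all predecessors of \(n\) to values in \(\{0,1\}\). This prevents a single bad value of \(\varphi_e\) at some input \(k\) from letting us enumerate larger elements, which is exactly what would otherwise turn \(W_{f(e)}\) into a potentially non-recursive set. The rest is routine bookkeeping via \(s\)-\(m\)-\(n\).
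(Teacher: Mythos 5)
Your proof is correct, and it is the standard argument this lemma calls for: cut the enumeration of \(W_{f(e)}\) off at the first input where \(\varphi_e\) fails to be a total \(\{0,1\}\)-valued function, so that a bad index yields a finite (hence recursive) set while a genuine characteristic index yields the set itself. The paper states this lemma without proof, so your write-up supplies exactly the missing verification, and the monotone-threshold point you flag is indeed the one step that needs care.
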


	\begin{Lemma}
		There exists no total computable numbering of the recursive sets in following sense: 
		There exists no recursive function \(f\) such that for all \(e\), \(f(e)\) is a characteristic index for a recursive set, and all recursive sets have such an index.
	\end{Lemma}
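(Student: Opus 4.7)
The plan is to prove this by a standard diagonal argument, analogous to Cantor's: assume such an $f$ exists and construct a recursive set that is outside the enumerated family.

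First I would unpack the hypothesis. If $f\in\mc{R}$ has the property that for every $e$, $\varphi_{f(e)}$ is the characteristic function of some recursive set $A_e$ (so in particular $\varphi_{f(e)}$ is total and takes values in $\{0,1\}$), and if $\{A_e\mid e\in\N\}$ exhausts the recursive sets, then I want to diagonalize against this list. Because $\varphi_{f(e)}(n)$ is a total recursive function of $\la e,n\ra$, the map $g(n)=1-\varphi_{f(n)}(n)$ is itself a total recursive function with values in $\{0,1\}$.

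Next I would identify $g$ as the characteristic function of a recursive set $B\delm\N$. By the assumed surjectivity of the numbering, there must be some index $m$ with $B=A_m$, i.e.\ $c_B=\varphi_{f(m)}$. Evaluating at $n=m$ yields
\[
\varphi_{f(m)}(m)=c_B(m)=g(m)=1-\varphi_{f(m)}(m),
\]
which is impossible since $\varphi_{f(m)}(m)\in\{0,1\}$. This contradiction refutes the assumption.

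The argument is routine; the only thing to be careful about is the \emph{totality} of the diagonal function $g$, which is why the characteristic-index formulation (as opposed to the r.e.-index formulation of the previous lemma) is essential: when the indices $f(e)$ are guaranteed to code total $\{0,1\}$-valued functions, the function $n\mapsto\varphi_{f(n)}(n)$ is automatically recursive, and flipping its value produces another recursive characteristic function. With r.e.-indices this step would fail because $\varphi_{f(n)}(n)$ need not converge, which is exactly why the previous lemma gives a positive answer in that setting.
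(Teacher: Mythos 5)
Your proof is correct and follows essentially the same diagonal argument as the paper. In fact it is slightly more careful: by using \(g(n)=1-\varphi_{f(n)}(n)\) rather than \(\varphi_{f(n)}(n)+1\), you ensure the diagonal function is genuinely a \(\{0,1\}\)-valued characteristic function of a recursive set, so it must occur on the list, which is exactly the point needed to close the contradiction.
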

	\begin{proof}
		If \(f\) is recursive and \(\{ \varphi_{f(e)} \}_{e \in \N}\) is a list of recursive functions, then \(x \mapsto \varphi_x(x)+1\) is recursive, 
		but not equal to \(\varphi_{f(e)}\) on input \(e\), and thus not on the list.
	\end{proof}

	These to lemmas express a parallel with the relationship between quasi-numbering and standard-numbering of \(\Rk\).

\section{Total numberings do not give modulus}
	Other things being equal it would be desirable to have totally defined numberings. It is essentially the same as to say that the domain of definition of the numbering is recursive. 
	In the theory of numberings one often confines attention to totally defined numberings.

	\begin{Lemma}
		Let \(x = \lim_n q_{f(n)}\) (\(f \in \mc{R}\)), where \(\{q_{f(n)}\}\) is a geometrically convergent sequence of rational numbers. We can effectively find \(y\) such that
		\[
		 	x \equiv \sum_{n=1}^\infty x_n 3^{-n} (\op{mod} 1) \og y \equiv \sum_{n=1}^\infty y_n 3^{-n} (\op{mod} 1) \saa (\forall k)(x_k \neq y_k) 
		\]
	\end{Lemma}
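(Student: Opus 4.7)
The plan is to approximate \(x\) effectively and, at each position \(k\), determine an overapproximation of the set of possible \(k\)-th ternary digits of \(x \pmod 1\), then pick \(y\)'s \(k\)-th digit to avoid that set.

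First I would use the geometric convergence to compute, for each \(k \in \N\), a rational \(r_k\) with \(|x - r_k| < 3^{-k-2}\). The interval \(I_k := [r_k - 3^{-k-2}, r_k + 3^{-k-2}]\) has length less than \(3^{-k}\), so after reducing modulo \(1\) it meets at most two consecutive ternary cells of width \(3^{-k}\) in \([0,1)\). Let \(X_k' \subseteq \{0,1,2\}\) be the set of residues modulo \(3\) of the indices of those overlapping cells; this is effectively computable, has cardinality at most \(2\), and contains the \(k\)-th digit of every ternary representation of \(x \pmod 1\).

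With \(Y_k' := \{0,1,2\} \setminus X_k'\) nonempty, I would choose \(y_k \in Y_k'\) by the rule: prefer \(y_k = 1\) whenever \(1 \in Y_k'\); otherwise alternate between \(0\) and \(2\) across those positions where \(\{0,2\} \subseteq Y_k'\). Define \(y := \sum_{k \geq 1} y_k \cdot 3^{-k}\), which is effectively computable since the partial sums converge geometrically. By construction \(y_k \notin X_k'\), hence \(y_k\) differs from the \(k\)-th digit of every ternary representation of \(x \pmod 1\).

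The hard part will be verifying that \(y\) is itself not a triadic rational, equivalently that \((y_k)\) is neither eventually \(000\ldots\) nor eventually \(222\ldots\); otherwise \(y \pmod 1\) would have a second ternary representation whose digits might coincide with \(x\)'s. The preference rule gives \(y_k = 1\) whenever \(1 \in Y_k'\); if this happens infinitely often we are done. Otherwise \(1 \in X_k'\) for all large \(k\), and the alternation inserts both \(0\) and \(2\) into the tail infinitely often wherever there is a genuine choice. The remaining pathological case, where the overapproximation \(X_k'\) persistently equals \(\{1,2\}\) (or \(\{0,1\}\)) and so forces a constant tail, only arises when \(x\) is extraordinarily well-approximated by triadic rationals of a specific pattern; it is ruled out by an adaptive precision subroutine that sharpens \(I_k\) well below \(3^{-k-2}\) at the offending positions, until either \(X_k'\) drops to a single element or we confirm \(x = m/3^k\), in which case the tail of \(Y_k'\) becomes \(\{1\}\) and \((y_k)\) ends in \(111\ldots\), which is non-triadic.
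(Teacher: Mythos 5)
Your first two paragraphs are exactly the paper's argument, which compresses them into a single sentence: approximate \(x\) finely enough at each position to exclude a digit. They are correct and correctly elaborated: an interval of length \(2\cdot 3^{-k-2}<3^{-k}\) containing \(x\) meets at most two width-\(3^{-k}\) cells, so the set \(X_k'\) of possible \(k\)-th digits over all ternary representations of \(x \bmod 1\) has at most two elements, and a third digit is found effectively.

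The third paragraph is where you go beyond the paper, and the extra device does not work as stated: your adaptive subroutine must, in the offending case, either shrink \(X_k'\) to a singleton or ``confirm \(x=m/3^k\)'', but equality of a computable real with a given rational is only \(\Pi_1\), so when \(x\) actually equals \(m/3^k\) the subroutine never terminates. Fortunately the case you are guarding against cannot occur, so the subroutine should simply be deleted. Indeed, \(Y_k'\) is a forced singleton \(\{0\}\) or \(\{2\}\) exactly when \(I_k\) contains a point \(m3^{-k}\) with \(m\not\equiv 0 \pmod 3\); if this happened at both \(k\) and \(k+1\), the two boundary points would be distinct multiples of \(3^{-k-1}\) (one congruent to \(0\) and one not, mod \(3\)) at distance at most \(2\cdot 3^{-k-2}+2\cdot 3^{-k-3}<3^{-k-1}\), which is impossible. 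Hence forced \(0\)'s and \(2\)'s occur at most at every other position, and at the remaining infinitely many positions your preference-for-\(1\) and alternation rules already prevent \((y_k)\) from being eventually constant, so \(y\) is not triadic. Alternatively, the whole concern is dispensable for the lemma as the paper uses it: all that is needed downstream is \(y\neq x\), and if \(y=x\) then your constructed digit sequence would itself be a ternary representation of \(x \bmod 1\), contradicting \(y_k\notin X_k'\).
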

	\begin{proof}
		Since \(\{ q_{f(n)} \}\) is geometrically convergent we can effectively determine \(x\) with arbitrary precision, 
		and thus also with large enough precision to exclude a coefficient \(y_k \in \{ 0,1,2\}\) for each \(k\).
	\end{proof}

	\begin{Theorem}
		Let \(X\) be a effective Banach space that is not zero-dimensional.
		Total numberings of \(\Xk\) do not give modulus.
	\end{Theorem}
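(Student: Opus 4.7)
The plan is to diagonalize against $\nu$ using the previous lemma on ternary expansions. Suppose for contradiction that $\nu:\N \saa \Xk$ is total and gives modulus. Since $\X$ is effective and not zero-dimensional, the e.g.s.\ contains a nonzero element $v \in \Xk$. From the assumed modulus for $\nu$ together with effectivity of the norm on $\X$, the reals $s_e := \|\nu e\|/\|v\|$ form a sequence in $\Rk$ that is uniformly computable in $e$, each equipped with an explicit geometric rational modulus.

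Next I would invoke the previous lemma uniformly in $e$: it extracts, from the modulus data for $s_e$, a ternary digit $d_e \in \{0,1,2\}$ that cannot occur as the $e$-th digit of any ternary expansion of $s_e$ (computing $s_e$ to precision well below $3^{-(e+1)}$ leaves at most two candidate $e$-th digits, so at least one digit is forbidden). I then form $r := \sum_{e \geq 0} d_e \cdot 3^{-(e+1)} \in [0,1]$, which lies in $\Rk$ and whose constructed expansion has $e$-th digit $d_e$. Hence $r \neq s_e$ for every $e$, since equality would force $d_e$ to appear as an $e$-th digit in some ternary expansion of $s_e$.

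Lifting back, $rv \in \Xk$ by closure of $\Xk$ under computable scalar multiplication, and $\|rv\| = r\|v\| \neq s_e \|v\| = \|\nu e\|$ for every $e$, so $rv \neq \nu e$ for all $e$, contradicting surjectivity of $\nu$ onto $\Xk$. The main obstacle I expect is the uniform digit-exclusion step — in particular the edge case where $s_e$ is a triadic rational with two ternary expansions — but this is precisely the situation the previous lemma was formulated to handle, so the diagonalization closes cleanly.
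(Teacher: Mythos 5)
Your proposal is correct and follows essentially the same route as the paper: both diagonalize against the norms of the enumerated elements via the ternary digit-exclusion lemma (made uniform using the assumed modulus plus effectivity of the norm), build a computable real avoiding every $\|\nu e\|$, and realize it as the norm of a computable scalar multiple of a fixed nonzero computable vector. The only cosmetic difference is that you normalize by dividing by $\|v\|$ up front where the paper reduces the norms modulo $1$ and rescales at the end.
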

	\begin{proof}
		Let \(\{ a_{ij} \}_{i,j \in \N}\) be a matrix of elements of the countable dense subset \(A=\{ a_n \}\). 
		Suppose the matrix represents a numbering of \(\Xk\) that gives modulus: 
		\(\forall i,j \quad \|a_{i,j} - a_{i,j+1}\| < 2^{-j}\), and in particular that the matrix is a computable sequence. 
		We shall show that this ``numbering'' is not surjective.
		Since \(\X\) is an effective Banach space we can effectively form the matrix of norms \(\{ \| a_{ij} \| \}\).
		We can then form the matrix \(\{ a'_{ij} \}\) where \(a'_{ij} \delm \{ 0,1,2 \}\) as follows: 
		Write \(\| a_{ij} \| \op{mod} 1\) as a power series \(\sum_{k=1}^\infty c_{ijk} 3^{-k}\), where \(c_{ijk} \in \{ 0,1,2\}\). 
		We can effectively find a \(d_{ijk} \in \{0,1,2\}\) such that \(d_{ijk} \neq c_{ijk}\).

		Let \(b = \sum_{k=1}^\infty d_{kkk} 3^{-k} (\in [0,1])\).

		Then we have \(\neg \exists in \quad b =\| \lim_j a_{ij}\|\).

		Since \(\X\) is not zero-dimensional, and \(\Xk\) is dense in \(\X\), there exists a \(x \in \Xk\), \(x \neq 0\). 
		Then \(y=\frac{b}{\|x\|}x \in \Xk\), and \(\|y\|=b\), but \(\forall in \quad y \neq \lim_j a_{ij}\).
	\end{proof}

	We have here seen an example of the phenomenon that phenomena from pure recursion theory often transfer to computable analysis:
	\begin{itemize}
		\item
			\((\neg \exists f \in \mc{R})(\mc{R}=\{\varphi_{f(n)} | n \in \N \})\)
		\item
			\((\neg \exists f \in \mc{R})(\Xk   =\{\lim^* a_{f(n)}| n \in \N \})\)
	\end{itemize}

	Quasi-effectivity seems to be a concept that falls between two chairs in the following sense: 
	\(x \mapsto \sqrt{2} x\) is not quasi-effective, because quasi-indices, that do not generally give modulus,
	do give modulus for elements outside the original countable dense subset.
	For a general total numbering that does not at all give modulus, the argument from the quasi-effective case cannot be used, 
	so we need a new argument to answer the question:

	Is there a total numbering \(\nu\) of effective Banach spaces that gives the same class of effective operators as the standard-numbering \(\alpha\)?

\chapter{Effective operators}
	
\section{The Kreisel-Lacombe-Shoenfield theorem}

	\paragraph{KLS for effective Banach space}
	We use the word ``function'' to emphasize that a result is not restricted to linear operators.

	\begin{Theorem}[Ceitin-Blanck]
		Let \(f\) be a \(\-{\alpha}\)-computable function on a computable metric space \(A\). Then \(f\) is effectively continuous.
	\end{Theorem}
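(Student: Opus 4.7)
The plan is to treat this as the specialization of the Ceitin-Blanck theorem already recorded in the chapter on effective metric spaces to the single-space case $B^{*}=A^{*}$, so it follows as a direct corollary of that earlier statement by identifying the domain and codomain copies of $\Ak$. For a self-contained proof one reproduces the standard Kreisel-Lacombe-Shoenfield diagonalization as adapted to metric spaces by Ceitin; I would favor writing out this direct argument for expository reasons.

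First I would reduce the open-set preimage version of effective continuity to a pointwise modulus statement: given an $\alpha$-index $e$ and $n \in \N$, produce a recursive $m = m(e,n)$ such that for every $\alpha$-index $e'$ with $d(\alpha e, \alpha e') < 2^{-m}$ one has $d(f(\alpha e), f(\alpha e')) < 2^{-n}$. The open-set formulation then follows by a routine effective enumeration: for each basic ball $B(\alpha e_0, \rho k_0)$, enumerate rational balls centered at points of the preimage using the pointwise modulus together with recursiveness of $\equiv_{\alpha_0}$, and collect their indices into the r.e. set $\mc{W}_{g(e_0,k_0)}$ via the $s$--$m$--$n$ theorem.

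Second, I would prove the pointwise version by diagonalization. Assuming for contradiction that no recursive $m$ satisfies the requirement, I would use $S^1_1$ and the recursion theorem to construct, uniformly in an auxiliary parameter, a computable sequence $(z_s)$ from $A$ whose approximations track those of $\alpha e$ up to the stage where a candidate modulus $m_s$ is found, and at that stage switch to an $A$-element certifying the failure of $m_s$: a point within $2^{-m_s}$ of $\alpha e$ whose $f$-value is known, via $\alpha$-computability of $f$, to lie at distance at least $2^{-n}$ from $f(\alpha e)$. Single-valuedness of $f$ at the limit $\alpha e$ forces only finitely many switches, so $(z_s)$ converges effectively to $\alpha e$ while $(f(z_s))$ fails to converge to $f(\alpha e)$, contradicting $\alpha$-computability of $f$.

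The main obstacle is the construction of the switching sequence: one must simultaneously maintain a recursive modulus of convergence for $(z_s)$, search effectively for a ``bad neighbor'' inside each candidate ball (relying on density of $A$, recursiveness of $\equiv_{\alpha_0}$, and computability of $d$), and argue termination of the switching process from single-valuedness of $f$. Once these ingredients are in place the contradiction is immediate; the surrounding reduction to the open-set form is then essentially bookkeeping with the effective metric-space structure already set up earlier in the paper.
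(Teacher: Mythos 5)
Your first move is the right one: the paper offers no proof of this statement at all --- it is a cited result (hence the name ``Ceitin--Blanck''), and the intended justification is exactly your observation that it is the one-space specialization of the Ceitin/Kreisel--Lacombe--Shoenfield theorem for computable metric spaces already quoted in the chapter on effective metric spaces. Up to that point your proposal matches the paper.

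Your optional self-contained argument, however, has a genuine gap at precisely the step you label as bookkeeping. The switching construction can only search effectively for a ``bad neighbor'' among elements of the dense set \(A\) (or among indices that pass a finite, decidable test), because \(\op{dom}\alpha\) is \(\Pi_2\): if you allow the switch target to range over arbitrary alleged \(\alpha\)-indices, a spurious target with divergent approximations can be selected, the constructed index then falls outside \(\op{dom}\alpha\), and the extensionality of \(\varphi\) no longer yields a contradiction. But if you restrict the targets to \(A\), the diagonalization only shows that the candidate modulus is valid for points of \(A\) near \(\alpha e\); transferring it to all computable points near \(\alpha e\) by writing such a point as an effective limit of elements of \(A\) requires knowing that \(f\) commutes with that limit --- which is the continuity you are trying to prove. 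Closing this circle is the actual content of Ceitin's theorem (it is where effective completeness of \(\Ak\) and the covering argument of Ceitin's lemma enter), and it is why this result is substantially harder than KLS for Baire space. Two smaller points: the condition that the target's \(f\)-value lies at distance at least \(2^{-n}\) from \(f(\alpha e)\) is \(\Pi_1\) and must be replaced by a strict inequality with slack to be semi-decidable; and termination is not ``finitely many switches forced by single-valuedness'' --- in the standard argument a successful switch immediately contradicts extensionality, so no switch ever occurs and the commitment stage itself furnishes the modulus. If you want a written-out proof, either import it from Blanck or Ceitin, or present only the reduction to the earlier theorem as the paper does.
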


	\begin{Theorem}
		Effective Banach spaces \(\X\) are effective metric spaces (as in \cite{Blanck}).
	\end{Theorem}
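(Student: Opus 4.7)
The plan is to exhibit the data $(A,\alpha_0,d)$ witnessing that the effective Banach space $(\X,\alpha)$ fits Blanck's definition, then verify Blanck's three requirements on $(A,\alpha_0)$ and the inclusion $A\subseteq\Xk\subseteq A^*$, and finally note that the Blanck numbering of $\Ak$ agrees with the standard numbering $\alpha$.

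First I would take $A$ to be the rational span of a fixed e.g.s.\ $\{e_n\}$ of $\X$, and let $\alpha_0:\N\saa A$ be its canonical numbering by codes of formal rational linear combinations of the $e_n$ (pick a standard numbering of $\Q$, resp.\ $\Q+i\Q$, and of finite sequences). Then $\alpha_0$ is surjective onto $A$ by construction, $A$ is dense in $\X$ because $\{e_n\}$ is an e.g.s., and $A\subseteq\Xk$ because by the Linearkombinasjon axiom applied to constant sequences each $\alpha_0 n$ is computable; in fact $\{\alpha_0 n\}$ is a computable sequence, so $\alpha_0=\alpha\circ h$ for some $h\in\mc R$.

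Next I would produce the computable metric. Put $d(x,y):=\|x-y\|$. Given $m,n$, the vector $\alpha_0 m-\alpha_0 n$ is again a rational linear combination whose code is obtained recursively from $m$ and $n$; by the Norm axiom applied to the computable double sequence $\{\alpha_0 m-\alpha_0 n\}_{\la m,n\ra}$ the norms form a computable sequence of reals, which yields a recursive $\hat d$ with $\rho\circ\hat d=d\circ\alpha_0^2$. The inclusion $\Xk\subseteq A^*=\X$ follows immediately from Effective Density: every computable element is an effective limit of elements of $A$. Combined with $A\subseteq\Xk$ this yields $A\subseteq\Xk\subseteq A^*$, and a standard unwinding shows that the Blanck numbering of $\Ak$, which sends a code for a geometrically converging sequence from $A$ to its limit, coincides with our standard numbering $\alpha$ of $\Xk$.

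The main obstacle is the recursiveness of $\equiv_{\alpha_0}$: two distinct formal rational combinations can represent the same vector whenever $\{e_n\}$ carries a linear dependency, and deciding such dependencies is in general only $\Pi_1$ (via the norm). To bypass this I would take $\alpha_0$ to enumerate formal combinations in a fixed canonical form (indices in strictly increasing order, no zero coefficients), interpreting $\alpha_0 m=\alpha_0 n$ as equality of canonical codes; formal equality implies vector equality, so this is a legitimate choice of dense ``subset'' in Blanck's sense, with $d$ pulled back to a (possibly pseudo-)metric whose zero locus encodes the genuine equality in $\X$. Equivalently, one may thin $\{e_n\}$ to a linearly independent computable subsequence with the same dense span — possible whenever such a thinning can be computed, and harmless for our purposes — after which $\alpha_0$ is honestly injective and $\equiv_{\alpha_0}$ is trivially recursive.
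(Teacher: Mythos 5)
Your proposal follows essentially the same route as the paper's own proof: take $A$ to be the rational span of an e.g.s.\ with its canonical numbering, use \Lin to make $A$ a computable sequence, use \Norm (together with effectivity of subtraction) to make $d(x,y)=\|x-y\|$ a computable metric, and use Effective Density to identify $\Xk$ with the effective completion $\Ak$. That part is correct and matches the paper.

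Your final paragraph goes beyond the paper's proof: the paper never addresses the requirement that $\equiv_{\alpha_0}$ be recursive, even though its own definition of computable metric space demands it, so you have put your finger on a real subtlety rather than a defect of your own argument. Be aware, though, that neither of your two patches fully closes it. Reading $\alpha_0 m=\alpha_0 n$ as equality of canonical codes makes $\equiv_{\alpha_0}$ trivially recursive but turns $d$ into a pseudometric on the coded set whenever $\{e_n\}$ carries a dependency, which no longer literally satisfies the definition as stated. Thinning $\{e_n\}$ to a linearly independent subsequence would repair this, but deciding whether $e_{n+1}$ lies in the span of $e_0,\dots,e_n$ amounts to deciding whether a computable real (a distance to a finite-dimensional subspace) is zero, which is only $\Pi_1$; so the thinning cannot in general be carried out effectively, as you yourself hedge. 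The honest conclusions are either that the theorem holds as stated for spaces where the e.g.s.\ can be taken linearly independent (e.g.\ spaces with a computable Schauder basis), or that the definition of computable metric space should be weakened to drop decidability of $\equiv_{\alpha_0}$ (as is common elsewhere in the literature), in which case your argument and the paper's both go through without further ado.
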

	\begin{proof}
		Since \(\X\) is effective there exists an e.g.s. \(\{e_n\}\).

		Let \(A = \{a_n\}\) be the rational span of \(\{e_n\}\) equipped with metric \(d(x,y)=\|x-y\|\).
		\(A\) can be made into a computable sequence in \(\X\).

		By the Norm axiom \(\{ \| a_n \| \}\) is a computable sequence of real numbers.

		By Effective Density a general element in \(\Xk\) is of the form \(x=\lim^*_n a_{f(n)}\), where \(f\) are recursive.

		By the composition property the norm \(\|x\| = \| \lim^*_n a_{f(n)} \| = \lim^*_n \| a_{f(n)} \|\) (with same modulus of convergence) is effective.

		\((x,y) \mapsto (x - y)\) is effective by \Lin and Sequential Effectivity.
		The effective operations are closed under composition, so the metric \(d(x,y)=\|x-y\|\) is effective.

		Thus \(A\) is a computable metric space, and the closure an effective metric space.
	\end{proof}

	\begin{Lemma}
		Let \(T\) be a linear transformation between to normed vector spaces. Then the following are equivalent:
		\begin{description}
			\item[  (i)] \(T\) is continuous in a point \(x_0\)
			\item[ (ii)] \(T\) is uniformly continuous
			\item[(iii)] \(T\) is bounded
		\end{description}
	\end{Lemma}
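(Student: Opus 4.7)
The plan is to prove the three conditions equivalent by a short cyclic chain (i) $\Saa$ (iii) $\Saa$ (ii) $\Saa$ (i), exploiting linearity to turn local continuity into a global bound.

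First I would show (i) $\Saa$ (iii). Assuming $T$ is continuous at $x_0$, take $\varepsilon = 1$ and pick $\delta > 0$ so that $\|x - x_0\| \leq \delta$ implies $\|Tx - Tx_0\| \leq 1$. For any nonzero $x$, apply this to $y = x_0 + \delta \cdot x / \|x\|$: by linearity $T(y - x_0) = (\delta/\|x\|) Tx$, hence $\|Tx\| \leq \|x\|/\delta$. So $T$ is bounded with $\|T\| \leq 1/\delta$. The key point is that linearity lets us move the neighborhood of $x_0$ to a neighborhood of $0$ via $T(x + x_0) - T x_0 = Tx$, so continuity at one point forces a global Lipschitz bound.

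Next, (iii) $\Saa$ (ii) is almost immediate from the defining inequality $\|Tx - Ty\| = \|T(x-y)\| \leq \|T\| \cdot \|x-y\|$; given $\varepsilon > 0$, take $\delta = \varepsilon / (\|T\| + 1)$ (the $+1$ handles the trivial case $T=0$) and the implication $\|x-y\| < \delta \Saa \|Tx - Ty\| < \varepsilon$ holds uniformly in $x,y$. Finally (ii) $\Saa$ (i) is trivial, since uniform continuity implies continuity at every point and in particular at $x_0$.

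I do not expect any real obstacle; the only subtle point is that (i) is stated at a single point $x_0$, so one must remember to use the translation $x \mapsto x - x_0$ (which is available because we are in a vector space and $T$ is linear) to reduce to continuity at the origin before extracting the bound. The rest is routine manipulation of the norm axioms.
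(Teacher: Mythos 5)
Your proof is correct. The paper states this lemma without proof (it is a classical fact, presumably imported from the standard functional analysis literature such as Pedersen), so there is no argument in the text to compare against; the cyclic chain (i) \(\Rightarrow\) (iii) \(\Rightarrow\) (ii) \(\Rightarrow\) (i) you give, using the translation \(x \mapsto x - x_0\) and the rescaling \(y = x_0 + \delta x/\|x\|\) to convert continuity at one point into the global bound \(\|Tx\| \leq \|x\|/\delta\), is exactly the standard and complete argument.
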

%
%

	\begin{Lemma}[Bounded linear transformations]
		If \(\X_0\) is dense in \(\X\), then each operator \(T\) in \(\B(\X_0,\X)\) has a unique extension to an operator \(\tilde{T}\) in \(\B(\X)\).
	\end{Lemma}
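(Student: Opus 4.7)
The plan is to use the classical bounded linear transformation (BLT) extension construction, exploiting density of $\X_0$ together with completeness of $\X$. First, given $x \in \X$, I would choose a sequence $(x_n) \delm \X_0$ with $x_n \saa x$; such a sequence exists by density. The key observation is that $(Tx_n)$ is Cauchy, because $\| Tx_n - Tx_m \| \leq \| T \| \cdot \| x_n - x_m \|$ and $(x_n)$ is Cauchy as a convergent sequence. Completeness of $\X$ then yields a limit, which we define to be $\tilde{T}x$.

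Next I would verify that $\tilde{T}x$ is independent of the approximating sequence: if $(y_n)$ is another sequence in $\X_0$ with $y_n \saa x$, then $\| Tx_n - Ty_n \| \leq \|T\| \cdot \|x_n - y_n\| \saa 0$, so $\lim Tx_n = \lim Ty_n$. Taking constant sequences $x_n = x$ for $x \in \X_0$ shows $\tilde{T}$ extends $T$. Linearity of $\tilde{T}$ follows by picking approximating sequences for $x$ and $y$, using linearity of $T$ on $\X_0$, and passing to the limit using continuity of vector-space operations in $\X$. Boundedness comes from $\| \tilde{T} x \| = \lim \| T x_n \| \leq \| T \| \cdot \lim \| x_n \| = \| T \| \cdot \| x \|$, and in fact $\| \tilde{T} \| = \| T \|$ since $\tilde T$ extends $T$.

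For uniqueness, suppose $S \in \B(\X)$ also extends $T$. Then $S$ and $\tilde{T}$ agree on the dense set $\X_0$; both are continuous on $\X$; two continuous maps agreeing on a dense subset of a metric space must coincide, which finishes the argument.

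The only potential obstacle is really bookkeeping — there is no hidden difficulty, since completeness of $\X$ and density of $\X_0$ together supply both existence and uniqueness automatically. The slightly delicate point is verifying that $\tilde{T}$ is well-defined before using it in the linearity computation; the cleanest order is to establish well-definedness first, then linearity, then the norm estimate, and finally uniqueness.
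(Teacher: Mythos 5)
Your proof is correct and is the standard BLT extension argument (Cauchy image sequence, well-definedness, linearity and the norm estimate by passing to limits, uniqueness from continuity on a dense set); the paper states this lemma as a known classical fact and gives no proof, so there is nothing to contrast it with. The one point worth making explicit is that completeness of the codomain \(\X\) is what guarantees \(\lim T x_n\) exists, which you do note.
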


	\begin{Theorem}[KLS for effective Banach spaces]
		Let \(\X\) be an effective Banach space.
		Then each effective function defined on \(\Xk\) is continuous.
	\end{Theorem}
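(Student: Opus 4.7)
The plan is to reduce the statement to the Ceitin--Blanck theorem via the identification of effective Banach spaces with effective metric spaces that was just established. First I would invoke the preceding theorem: given an effective Banach space $\X$ with e.g.s.\ $\{e_n\}$, the rational span $A = \{a_n\}$ carries a computable metric $d(x,y)=\|x-y\|$, and the standard numbering $\alpha$ of $\Xk$ exhibits $\Xk$ as the effective completion $\Ak$ of $A$. So $(\Xk,\alpha)$ is simultaneously the numbered space on which our effective functions live and the effective metric space to which Ceitin--Blanck applies.

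Next I would check the definitional match: a function $f:\Xk \saa \Xk$ is effective in the Banach-space sense (\(\alpha\)-effective per Definition of $\nu$-effective operator) iff it is $\alpha$-computable in the metric space sense, since both unfold to the same requirement that some $\varphi \in \mc{PR}$ satisfy $\op{dom}\alpha \delm \op{dom}\varphi$ and $f(\alpha e) = \alpha \varphi(e)$ for all $e \in \op{dom}\alpha$. Once this bookkeeping is out of the way, Ceitin--Blanck immediately yields that $f$ is effectively continuous, and a fortiori continuous in the topological sense.

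Two small points I would make explicit. First, the statement is about arbitrary effective \emph{functions}, not merely linear operators, so I deliberately avoid the Bounded Linear Transformations lemma and the continuity equivalence lemma; the argument runs at the level of metric spaces, where linearity plays no role. Second, the remark is worth making that continuity here is not automatic from any axiom of the computability structure: it is precisely the Ceitin-type phenomenon---computable implies continuous---that is being imported from the metric-space setting.

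The main obstacle is thus not conceptual but notational: one must ensure that the numbering $\alpha$ produced by the embedding into effective metric spaces is literally the same standard numbering used to define $\alpha$-effectivity of functions on $\Xk$, rather than merely an equivalent one. Since both are built from the same e.g.s.\ $\{e_n\}$ and the same standard enumeration of rational linear combinations, this is immediate, but it is the step where one could be tempted to appeal silently to equivalence of numberings without noting it.
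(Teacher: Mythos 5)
Your proposal is correct and follows essentially the same route as the paper, which simply observes that an effective Banach space is an effective metric space and then applies Ceitin's theorem. The extra care you take in matching the numbering $\alpha$ and the two notions of effectivity is a worthwhile elaboration of a step the paper leaves implicit, but it is the same argument.
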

	\begin{proof}
		Since effective Banach space is an effective metric space this follows from Ceitin's theorem.
	 \end{proof}

	\begin{Theorem}[Strong version of KLS] \label{Theorem:Strong KLS}
		Let \(\X\) be a separable Banach space and \(T\) an effective linear operator on \(\Xk\). Then \(T\) has a unique continuous linear extension to all of \(\X\).
	\end{Theorem}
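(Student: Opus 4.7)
The plan is to combine the preceding KLS theorem for effective Banach spaces with the Bounded Linear Transformations extension lemma. First I would note that \(\Xk\) is dense in \(\X\), since any e.g.s.\ \(\{e_n\}\) generates a dense rational span \(A \subseteq \Xk\). The preceding KLS theorem then tells us that \(T:\Xk \saa \Xk\) is continuous, and in particular is continuous at \(0\).

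Next I would upgrade continuity to boundedness by a scaling argument. For \(\epsilon=1\), pick \(\delta>0\) with \(\|Tx\|<1\) whenever \(x\in \Xk\) and \(\|x\|<\delta\), and fix a positive rational \(r<\delta\). For any nonzero \(x\in \Xk\), the \Norm axiom gives \(\|x\|\in \Rk\), so \(s=r/(2\|x\|)\) is a positive computable scalar and \(sx\in \Xk\); since \(\|sx\|=r/2<\delta\), we get \(\|Tx\|=s^{-1}\|T(sx)\|<2\|x\|/r\). Hence \(\|Tx\|\le C\|x\|\) on \(\Xk\) with \(C=2/r\).

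By \(\Rk\)-linearity this yields \(\|Tu-Tv\|=\|T(u-v)\|\le C\|u-v\|\) for \(u,v\in \Xk\), so \(T\) is Lipschitz on the dense set \(\Xk\) and therefore extends uniquely to a continuous map \(\tilde T:\X\saa \X\), exactly as in the Bounded Linear Transformations lemma. Finally I would check that \(\tilde T\) is \(\F\)-linear: for \(\lambda\in\F\) and \(x,y\in\X\), pick approximations \(\lambda_n\in\Rk\) (or \(\Ck\)) and \(x_n,y_n\in\Xk\) converging to \(\lambda, x, y\); then \(\lambda_n x_n+y_n\in \Xk\), \(T(\lambda_n x_n+y_n)=\lambda_n Tx_n+Ty_n\), and passing to the limit using continuity of \(\tilde T\) and of scalar multiplication yields \(\tilde T(\lambda x+y)=\lambda\tilde T x+\tilde T y\). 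Uniqueness of a continuous extension is immediate from density of \(\Xk\).

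The main obstacle is the boundedness step together with the passage to \(\F\)-linearity: \(\Xk\) is only a vector space over the computable field \(\Rk\) or \(\Ck\), so the textbook ``continuous \(\Saa\) bounded'' and ``bounded extension is linear'' arguments must be reinterpreted in the effective category. Both are fixed by exploiting effectivity of the norm (to manufacture a computable scaling factor \(s\)) and the density of \(\Rk\) in \(\R\), so that the standard scaling and approximation arguments go through using only elements of \(\Xk\).
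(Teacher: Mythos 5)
Your proof is correct, and it is considerably more careful than the paper's own argument, which is only a few lines long: the paper chooses an e.g.s.\ \(\{e_n\}\), extends \(T\) ``by linearity'' to the full \(\F\)-linear span of \(\{e_n\}\), and then ``by continuity'' to the closure. The skeleton is the same in both cases --- KLS supplies continuity on \(\Xk\), density supplies the extension --- but you order the steps differently and you fill two gaps that the paper leaves implicit. First, the paper never argues that \(T\) is bounded (equivalently, uniformly continuous) on the dense set, which is exactly what the continuity-extension step requires; your scaling argument supplies this, and it correctly uses only the computable scalar \(s = r/(2\|x\|)\), which is precisely where effectivity of the norm (the \Norm axiom) genuinely matters, since \(\Xk\) is only closed under multiplication by computable scalars. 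Second, the paper's extension by linearity on the span presupposes well-definedness even when the \(e_n\) are linearly dependent --- and the paper explicitly declines to assume a Schauder basis --- whereas your route, extending by (uniform) continuity first and then verifying \(\F\)-linearity by approximating arbitrary scalars from \(\Rk\) and arbitrary vectors from \(\Xk\), sidesteps that issue entirely. The trade-off is that the paper's version gets \(\F\)-linearity for free on the span while yours must check it at the end, but your check is routine and your argument is the more robust of the two.
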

	\begin{proof}
		Choose an e.g.s. \(\{e_n\}\). Then \(\X\) is the closure of the linear span of \(\{ e_n\}\).
		Since \(T\) is effective, \(T\) is defined for \(\{e_n\}\). We extend therefore \(T\) uniquely by linearity on the span of \(\{e_n\}\), and thereafter uniquely by continuity on the closure.
	\end{proof}

	This is an example of the recursion theoretical significance of linearity. 
	In mathematics and perhaps especially in recursion theory we wish to reduce the infinite to the finite, 
	and the uncountable to the countable. By linearity we have above so to say {\em reduced computability to effectivity}; 
	by computability we mean then the stronger requirement to act computably even on non-computable input.

	Effective operators can by the above be considered to be totally defined. In short: If \(T\) is effective, then \(T\) is continuous on all of \(\X\).

	Bounded operators are often required to be totally defined, but then for conventional rather than theoretical reasons. 
	Continuity is for example preserved when restricting to closed subspaces. 
	We have thus found a theoretical reason for a convention: The continuous operators that arise in analysis are as a rule effective. 
	This can be given a philosophical treatment based on Church's thesis, see for example section I.9 in \cite{Odi}.

	Furthermore it is clear that linearity of an operator is a good property in several ways: 
	on \(n\)-dimensional Banach spaces they can be described using \(n^2\) elements of the scalar field \(\mb{F} \in \{ \R, \C \}\), 
	whereas continuous non-linear operators are described using using countably many elements from \(\mb{F}\), 
	since a continuous operator is determined by its action on a dense subset. 
	And non-continuous non-linear operators must be described by uncountably many (\(\beth_1\), i.e. ``continuous many'') elements from \(\mb{F}\).

	\paragraph{Effective continuity}
	In the original KLS-theorem for operators on the space of partial functions \(\N \saa \N\), \(\mc{P}\), 
	an operator is effective iff it has an effectively continuous extension. 
	But a bounded linear operator is effectively uniformly continuous, so for Banach spaces (and thus for metric spaces) ``if and only if'' must be replaced by ``only if''. 
	The generalization of KLS to domain theory has also just ``only if'', i.e. effectivity implies continuity. 
	The concept of effective continuity stands in a position between effectivity (a relatively strong requirement) and continuity (a relatively weak requirement).

	KLS gives that effective operators are continuous on \(\Xk\).

	It is known that there exist effective functions on \(\Rk\) that have no continuous extension to all of \(\R\):

	Define the triangle function \(\tau(x_1,x_2,h)(x)=\op{max} \{0, h-|x-(\frac{x_1-x_2}{2})| \}\) with support on the interval \([x_1,x_2]\) and height \(h\).

	\begin{Lemma}
		There exists an effective function on \(\Xk\) (\(\X\) an effective Banach space) that is not the restriction of any continuous function on \(\X\).
	\end{Lemma}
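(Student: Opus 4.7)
The plan is to reduce to the analogous one-dimensional statement on \(\Rk\) --- which is the ``known'' fact referenced in the paragraph preceding the lemma --- by pushing a scalar singularity into \(\X\) through the norm. The one-dimensional ingredient is an effective function \(g:\Rk\saa\Rk\) with no continuous extension to \(\R\), its essential singularity occurring at some computable \(r_0>0\). Using the triangle function \(\tau\) just defined, such a \(g\) is produced \Gregor-Lacombe-style by summing spikes \(\tau(q_e-\epsilon_e,\,q_e+\epsilon_e,\,e)\) along a recursive sequence \(q_e\nearrow r_0\), the \(e\)-th spike being activated precisely when a halting witness for \(\varphi_e(e)\) appears; since the halting set is infinite, the heights diverge along a subsequence approaching \(r_0\), so \(g\) is unbounded on every neighbourhood of \(r_0\).

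Granted such a \(g\), and using that \(\X\) is not zero-dimensional together with density of \(\Xk\) in \(\X\) to fix some \(y_0\in\Xk\) with \(\|y_0\|=1\), I would define
\[
 f:\Xk\saa\Xk,\qquad f(x) \;=\; g(\|x\|)\cdot y_0.
\]
Effectivity of \(f\) is then a composition of effective operations: the norm \(\|\cdot\|:\Xk\saa\Rk\) is effective by \Norm, the map \(g\) is effective by the one-dimensional step, and scalar multiplication \(\Rk\times\Xk\saa\Xk\) is effective by the definition of an effective Banach space. To rule out continuous extensions, suppose \(\tilde f:\X\saa\X\) were continuous with \(\tilde f\!\upharpoonright\!\Xk = f\), and set \(x_* = r_0 y_0\in\Xk\), \(x_n = q_{e_n} y_0 \in \Xk\), where \((e_n)\) enumerates halting indices with \(q_{e_n}\saa r_0\). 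Then \(x_n\saa x_*\) in \(\X\) while \(\tilde f(x_n) = f(x_n) = g(q_{e_n}) y_0 = e_n y_0\) is unbounded in norm, contradicting continuity of \(\tilde f\) at \(x_*\).

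The main obstacle is the one-dimensional construction of \(g\) itself. The subtle point there is effectivity at inputs close to a spike centre: equality with a centre \(q_e\) is \(\Pi_1\) on \(\Rk\) and not decidable, so \(g(t)\) cannot be computed by first determining which spike contains \(t\). The standard remedy is to approximate rather than evaluate --- given a standard index for \(t\) and precision \(2^{-n}\), use rational approximations of \(t\) jointly with the enumeration of halting pairs, exploiting the pairwise disjointness of the spike supports and a careful choice of widths \(\epsilon_e\) and halting-detection schedule so that the uncomputed tail contributes less than \(2^{-n}\), uniformly in the standard index of \(t\). Once this is in hand the transport argument above is routine.
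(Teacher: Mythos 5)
Your transport step is sound and even supplies something the paper leaves implicit: given a one\-/dimensional counterexample \(g:\Rk\saa\Rk\), fixing \(y_0\in\Xk\) with \(\|y_0\|=1\) and setting \(f(x)=g(\|x\|)\,y_0\) does yield an effective function on \(\Xk\) with no continuous extension, since the norm, \(g\), and scalar multiplication are effective and effective operations compose. The fatal problem is the one\-/dimensional ingredient itself. You place the singularity at a \emph{computable} point \(r_0\), with rational spike centres \(q_e\nearrow r_0\) and \(g(q_e)=e\) for \(e\) in the halting set. This cannot be effective, for two reasons. Concretely: a standard index for the rational \(q_e\) is computable from \(e\), so an algorithm producing \(g(q_e)\) to within \(1/2\) from such an index would decide whether \(\varphi_e(e)\) halts; at the input \(q_e\) the ``uncomputed tail'' is not a tail at all but the entire spike of height \(e\), and no choice of widths \(\epsilon_e\) or detection schedule can make it small, precisely because the heights must be unbounded for your non\-/extendability argument to run. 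Structurally: by the Ceitin/KLS theorem quoted in this very chapter, every effective function on \(\Rk\) is continuous \emph{on \(\Rk\)}; since \(r_0\) and all the \(q_e\) lie in \(\Rk\) while \(q_{e_n}\saa r_0\) and \(g(q_{e_n})\saa\infty\), your \(g\) would be discontinuous at a computable point, which is impossible. So the obstacle is not, as you suggest, the \(\Pi_1\)-ness of equality with a spike centre; it is that an accumulation point of arbitrarily tall activated spikes is forced to be non-computable.

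That is exactly what the construction in the text arranges: the spikes are indexed by the nodes of a recursive tree \(T\subset\{1,2\}^{\N}\) with no recursive branch, so that every computable real either stays away from the relevant Cantor set or has a digit sequence that leaves \(T\) at some finite level, after which only finitely many spikes contribute and \(g\) is computable there; all the bad behaviour is pushed onto the (necessarily non-computable) reals coding infinite branches, where the function fails to extend continuously. Equivalently, you could keep your halting\-/set flavour by letting the \emph{location} of the \(e\)-th spike depend on the stage at which \(\varphi_e(e)\) halts, so that tall spikes accumulate only at non-computable reals. With the one\-/dimensional step repaired in one of these ways, your \(f(x)=g(\|x\|)\,y_0\) completes the proof for any effective Banach space that is not zero-dimensional.
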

	\begin{proof}
		We show that there exists an effective function \(f:\Rk \saa \Rk\) that (by KLS) is continuous on \(\Rk\), 
		but does not have a continuous extension to all of \(\R\).

		Let \(T \subset \{ 1,2 \}^{\N}\) be a recursive tree with no recursive branch.

		Let \(f = \sum_{n=1}^{\infty} 
		\sum_{\sigma \in \{1,2\}^n \snitt T} \tau(\sum_{i=1}^n \sigma_i 4^{-i}, \sum_{i=1}^n \sigma_i 4^{-i} +4^{-n},2^{-n})\).

		In 4-ary representation we here use the digits 1 and 2, but not 0 or 3, since \(0.1333..=0.2\) and we do not want discontinuity in computable pointer such as 0.2.
		\(f\) is effective on \(\Rk\) but goes to infinity near some points in \(\R \setminus \Rk\), and thus has no continuous extension to \(\R\).
	\end{proof}

	Since uniformly continuous functions on a dense subset of a metric space have a continuous extension to all of the space, 
	there thus exist effective functions that are not uniformly continuous. 
	It is therefore natural as in \cite{Pour} to define computable functions as those that are effective and effectively uniformly continuous.

	\begin{Prop}
		There exist total discontinuous linear operators on separable Banach space.
	\end{Prop}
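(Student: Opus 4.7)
The plan is to construct an unbounded linear map defined on all of \(\X\); by the equivalence lemma above (continuity at a point \(\Leftrightarrow\) boundedness for linear maps), any such operator is automatically discontinuous.

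First I would fix a separable infinite-dimensional Banach space, say \(\X = \ell_2\), and invoke Zorn's lemma to produce a Hamel basis \(B \delm \X\), so that every \(x \in \X\) has a unique representation as a \emph{finite} \(\F\)-linear combination of elements of \(B\). A standard Baire category argument shows that \(B\) must be uncountable: if \(B\) were countable, then \(\X\) would be the countable union of the finite-dimensional (hence closed and nowhere dense) subspaces spanned by the initial segments of \(B\), contradicting completeness of \(\X\).

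Next I would extract a countably infinite sequence \(\{b_n\}_{n \in \N}\) of distinct elements of \(B\), rescaled so that \(\|b_n\|=1\) for every \(n\); rescaling preserves the Hamel basis property. Fixing any \(v \in \X\) with \(v \neq 0\), I define a set-map \(T_0:B \saa \X\) by \(T_0(b_n) = n v\) and \(T_0(b) = 0\) for \(b \in B \setminus \{b_n \mid n \in \N\}\), then extend by linearity to a total linear operator \(T:\X \saa \X\). Well-definedness is immediate from uniqueness of Hamel expansions.

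Finally, \(T\) is unbounded, since \(\|T b_n\|/\|b_n\| = n\|v\| \saa \infty\) as \(n \saa \infty\), hence discontinuous by the equivalence lemma. The main (indeed only) non-trivial ingredient is the appeal to the axiom of choice in producing the Hamel basis together with the Baire category argument for its uncountability; everything else is a bookkeeping exercise. It is worth noting that this construction is intrinsically non-effective: the values of \(T\) on \(B \setminus \{b_n\}\) are chosen non-constructively, so no such operator could be produced by the effective machinery of the preceding chapters — consistent with the philosophical remark earlier that effective linear operators are continuous.
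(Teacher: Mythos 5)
Your construction is correct and is the standard Hamel-basis argument: extend an unbounded assignment on a linearly independent normalized sequence by linearity over a Hamel basis, then invoke the equivalence of boundedness and continuity for linear maps. The paper states this proposition without any proof, so there is nothing to compare against; the only remark worth making is that your Baire-category digression establishing the uncountability of \(B\) is superfluous — the construction only needs \(B\) to be infinite, which is immediate since \(\X\) is infinite-dimensional.
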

%

	\paragraph{Closed operators}
	Most operators arising in mathematical physics are closed. For a discussion see \cite{Pour}.

	\begin{Theorem}[Closed operators]
		Each effective operator is closed on \(\Xk\), and closed on \(\X\) if it is totally defined. There exist effective non-closed operators, and they are not totally defined.
	\end{Theorem}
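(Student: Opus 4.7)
The plan is to read the theorem off directly from the KLS material already in hand: the statement reformulates continuity as a closed-graph condition, with the three clauses differing only in the domain (\(\Xk\) versus \(\X\)) and whether one commits to the continuous extension.

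For closedness on \(\Xk\), I would apply the KLS theorem for effective Banach spaces (proved above via Ceitin--Blanck) to obtain continuity of \(T\) on \(\Xk\); closedness in \(\Xk \times \Xk\) is then immediate, because \(x_n \in \Xk\), \(x_n \saa x \in \Xk\), and \(Tx_n \saa y \in \Xk\) together with continuity give \(Tx = \lim Tx_n = y\). For closedness on \(\X\) in the totally defined case, I would invoke Theorem~\ref{Theorem:Strong KLS} (Strong KLS), which attaches to every effective linear operator a unique continuous linear extension to all of \(\X\); a totally defined \(T\) coincides with this extension and is thus a bounded operator on the full Banach space, whose graph is automatically closed in \(\X \times \X\).

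For the third claim I would take any effective operator whose domain is kept as the strict set \(\Xk\) instead of being passed to the continuous extension; the identity \(\op{id}:\Xk \saa \Xk\) is the simplest witness. Since \(\Xk\) is countable and dense in \(\X\) but strictly smaller whenever \(\X \neq \{0\}\), one picks \(x \in \X \setminus \Xk\) and \(\{x_n\} \delm \Xk\) with \(x_n \saa x\); then \((x_n, \op{id} x_n) = (x_n, x_n) \saa (x,x)\), yet \((x,x)\) is not in the graph of \(\op{id}\) since \(x \notin \Xk\), so \(\op{id}\) is not closed in \(\X \times \X\). The fact that such a non-closed effective operator cannot be totally defined is precisely the contrapositive of the second clause. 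The only real subtlety---not so much an obstacle---is the convention that an ``effective operator'' is allowed to retain its strict domain \(\Xk\) rather than being silently identified with its continuous extension; once that convention is fixed, each part is essentially a one-line corollary of earlier material.
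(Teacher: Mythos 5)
Your proposal is correct and follows essentially the same route as the paper: KLS gives continuity on \(\Xk\), from which closedness there is immediate, the totally defined case reduces to the continuous extension from Theorem~\ref{Theorem:Strong KLS}, and the counterexample is the identity on a proper dense subset (the paper uses \(\op{id}|_{\op{span}(\Xk)}\) rather than \(\op{id}|_{\Xk}\), but the argument via a limit point outside the domain is identical). No gaps.
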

	\begin{proof}
		Suppose \(x_n \saa x \in \Xk, Tx_n \saa y\). Since \(T\) by KLS is continuous on \(\Xk\), we have \(x \in \op{dom} T\), and \(Tx_n \saa Tx\). Since \(\X\) is Hausdorff, limits are unique and \(Tx=y\).

		The effective operator \(T = \op{id}|_{\op{span}(\Xk)}\) is not closed on \(\X\). For generally id on a proper dense subspace \(Z\) of \(\X\) cannot be closed. 
		Let namely \(x \notin Z\) and choose \(x_n \in Z, x_n \saa x\). Then we have \(Tx_n = x_n \saa x\), but \(x \notin \op{dom}(T)\). 
		But such counterexamples will have a unique closed extension by Theorem ~\ref{Theorem:Strong KLS}. 
		If we as suggested consider each effective operator to be total, then all effective operators become closed. 
		But this consideration is not necessary for Theorem ~\ref{Theorem:Strong KLS} itself, so we have a minor asymmetry between continuous and closed operators.
	\end{proof}

\section{Characterization of effective operators}

We can characterize the effective linear operators as follows: The effective operators are the continuous operators that act effectively on an e.g.s.

By the First Main Theorem we know the following: Let \(\X\) be an effective Banach space and \(\{ e_n \}\) an e.g.s. 
If \(T\) is closed, bounded and effectively determined then \(T\) is sequentially effective, i.e. \(T\) sends computable sequences to computable sequences.

\begin{Theorem}
Let \(\X\) be an effective Banach space. A linear operator \(T\) defined on \(\Xk\) is effective iff it is effectively determined and continuous.
\end{Theorem}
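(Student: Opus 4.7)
The plan is to prove the two directions separately, assembling results that have already been established in the paper. The theorem essentially bundles together the Sequential Effectivity theorem, the First Main Theorem, KLS for effective Banach spaces, and the lemma that bounded operators preserve effective limits.

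For the forward direction, suppose $T$ is effective. First I would get continuity immediately from the KLS theorem for effective Banach spaces, which says every effective function on $\Xk$ is continuous. Then, to see that $T$ is effectively determined, I would pick any e.g.s. $\{e_n\}$ for $\X$ (which exists since $\X$ is an effective Banach space). Since $\{e_n\}$ is a computable sequence, there is $f \in \mathcal{R}$ with $\alpha f(n) = e_n$; and since $T$ is $\alpha$-effective there is $\varphi \in \mathcal{PR}$ with $T \alpha e = \alpha \varphi e$ on $\operatorname{dom}\alpha$. The composition $\varphi \circ f$ is then an index showing that $\{Te_n\}$ is a computable sequence, so the same $\{e_n\}$ witnesses that $T$ is effectively determined.

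For the backward direction, suppose $T$ is effectively determined and continuous on $\Xk$. Let $\{e_n\}$ be an e.g.s.\ with $\{Te_n\}$ computable. I would then string together: (a) $T$ is bounded, because a continuous linear operator between normed spaces is bounded; (b) by the First Main Theorem, $T$ sends computable sequences to computable sequences, i.e.\ $T$ preserves computability; (c) by the earlier lemma that bounded operators preserve effective limits, $T$ preserves $\lim^{*}$; (d) by the Sequential Effectivity Theorem~\ref{Theorem:Sekvens}, any continuous function between effective Banach spaces that preserves both effective limits and computability of sequences is effective. Combining (a)--(d) yields that $T$ is effective.

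The main conceptual point, rather than any technical obstacle, is that the First Main Theorem needs an e.g.s.\ on which $T$ acts computably in order to conclude preservation of computable sequences, and this is exactly what ``effectively determined'' provides. The proof is really a catalogue of which hypothesis feeds which earlier theorem; no new construction is required. The only subtle point to mention is that continuity of $T$ on $\Xk$ is continuity in the induced metric, which is what is needed both to invoke boundedness (via linearity) and to apply Sequential Effectivity.
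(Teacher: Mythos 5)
Your proof follows essentially the same route as the paper's: KLS plus computability of \(\{Te_n\}\) for the forward direction, and the First Main Theorem followed by Sequential Effectivity for the converse; your extra care in citing the lemma that bounded operators preserve effective limits (needed to meet both hypotheses of Sequential Effectivity) is a welcome explicitness that the paper leaves implicit. The one step you skip is that the First Main Theorem, as stated, requires \(T\) to be \emph{closed}, not merely bounded; the paper bridges this by observing that \(T\) has a unique continuous extension to all of \(\X\) and that a totally defined continuous operator is closed. With that one sentence added, your argument matches the paper's.
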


\begin{proof}
Suppose \(T\) is effective. Then \(T\) is continuous by KLS, and effectively determined since \(\{ e_n \}\) is a computable sequence.

Now suppose that \(T\) is effectively determined and continuous.
\(T\) can be considered to be closed since it has a unique continuous extension to all of \(\X\), and each totally defined continuous operator is closed.
Now the First Main Theorem gives us that \(T\) sends computable sequences to computable sequences, i.e. \(T\) is sequentially effective. By Sequential Effectivity \(T\) is effective.
\end{proof}

\chapter{Operatorrom}
	One can put several norms on sets of linear operators on Banach space, 
but it is the operator norm \(\|T\| = \sup_{\|x\|=1} \|Tx\|\) that gives that the bounded operators are the continuous ones.

It is with this norm that the space of operators on a Banach space becomes another Banach space. 
The operator norm is not computable for operators on infinite dimensional Banach space. 
The closes we get is the effectively closed effective ideal of effectively compact operators, 
where the operator norm becomes an effective operation in the case that the Banach space has the approximation property.

\section{Counterexamples for the operator norm}
	\subsection{\(\B(\X)\) not separable}
	Let \(\X\) be a Banach space. 
	The Banach space \(\B(\X)\) of bounded linear operators on \(\X\) is a natural starting point for a comparison between 
	higher order recursion theory and the theory of computability structures on Banach spaces.
	A difficulty for the comparison is that \(\B(\X)\) is not separable.

	\begin{Lemma}
		Let \(\H\) be a separable Hilbert space.
		Then \(\B(\H)\) separable iff \(H\) is finite dimensional.
	\end{Lemma}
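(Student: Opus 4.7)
The plan is to prove both directions, with the nontrivial content in the $(\Rightarrow)$ direction (equivalently, its contrapositive: infinite-dimensional separable $\H$ forces $\B(\H)$ to be non-separable).

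For the easy direction, if $\dim \H = n < \infty$, then $\B(\H)$ is linearly isomorphic to the space of $n \times n$ matrices over $\F$, hence has finite dimension $n^2$, so it is trivially separable (any countable dense subset of $\F^{n^2}$ transfers over via the isomorphism).

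For the hard direction, I would fix an orthonormal basis $\{e_n\}_{n \in \N}$ of $\H$ (which exists since $\H$ is separable and infinite dimensional) and, for each $S \subseteq \N$, define the orthogonal projection $P_S$ onto the closed linear span of $\{e_n : n \in S\}$. Each $P_S$ lies in $\B(\H)$ with $\|P_S\| \leq 1$. The key calculation is that for $S \neq T$, picking any $n$ in the symmetric difference $S \triangle T$, the unit vector $e_n$ satisfies $(P_S - P_T)e_n = \pm e_n$, so $\|P_S - P_T\| \geq 1$ (and in fact equals $1$ since both projections have norm $\leq 1$). This produces a family $\{P_S\}_{S \subseteq \N}$ of cardinality $2^{\aleph_0}$ in $\B(\H)$ whose members are pairwise at distance $\geq 1$.

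From this, separability fails by a standard ball-packing argument: the open balls $B(P_S, 1/2)$ are pairwise disjoint, so any dense subset of $\B(\H)$ must contain at least one element in each of these uncountably many disjoint balls, hence cannot be countable. I expect the writing to be almost entirely bookkeeping; the main (and only) conceptual step is the observation that one can encode all of $\mathcal{P}(\N)$ into $\B(\H)$ as an equilateral set using projections onto basis subsets, and this encoding requires no more than the inner-product structure already available on a separable infinite-dimensional Hilbert space.
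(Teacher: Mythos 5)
Your proof is correct: the family of projections \(P_S\) onto \(\overline{\operatorname{span}}\{e_n : n \in S\}\) is indeed an uncountable set with pairwise distances exactly \(1\) in \(\B(\H)\), and the disjoint-balls argument then kills separability, while the finite-dimensional direction is immediate from \(\B(\F^n) \cong \F^{n^2}\). The paper omits the proof of this lemma, but your argument is the standard one and is exactly parallel to the argument the paper does spell out later for the non-separability of \(\ell^\infty\) (an uncountable subset of pairwise distance \(1\), namely \(\{0,1\}^{\N}\)), so you have taken essentially the intended route.
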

	%
	%
	%

	The effective Banach spaces are thus not closed under exponentiation \(\X \mapsto \B(\X)\), so 
	\textbf{Ban} does not automatically become an ``effective cartesian closed category''. 
	But we can limit ourselves to the closure of the linear span of the effective operators. It becomes a separable closed subspace of \(\B(\X)\).

\subsection{An effective operator with non-computable norm}

	\begin{Theorem}
		There exist an effective operator on \(\ell^2\) with non-computable norm.
		There exists thus no computability structure on \(\B(\ell^2)\) where all effective operators are computable elements, since \Norm is not satisfied.
	\end{Theorem}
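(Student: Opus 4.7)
The plan is to construct an effective diagonal operator on $\ell^2$ whose operator norm is a Specker-style c.e.\ real and hence not in $\Rk$; the non-existence of a suitable computability structure on $\B(\ell^2)$ then follows immediately from the Norm axiom.

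First, I would fix an r.e.\ non-recursive set $K \subseteq \N$ together with an injective computable enumeration $g\colon\N\saa K$, and set
\[ q_n = \sum_{j \leq n} 3^{-(g(j)+1)} \in [0,1). \]
This is a strictly increasing computable sequence of rationals whose limit $c$ has ternary expansion with digit $\chi_K(m)$ in position $m+1$. A standard diagonal argument shows $c \notin \Rk$: were $c$ computable, one could approximate it to within $3^{-(m+2)}$, read off the $(m+1)$-st ternary digit, and thereby decide $K$.

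Next, let $\{e_n\}$ be the standard orthonormal basis of $\ell^2$, which is an e.g.s. Define the diagonal operator $T$ on $\Xk$ by $T e_n = q_n e_n$ and extend linearly and continuously to $\ell^2$. Since $|q_n| \leq 1$, $T$ is bounded, and the standard computation
\[ \|T\| = \sup_n \|T e_n\| = \sup_n q_n = c \]
shows that $\|T\|$ is not a computable real. To see that $T$ is effective I would invoke the characterization of effective operators from the previous section: the sequence $\{T e_n\} = \{q_n e_n\}$ is computable (apply \Lin with $\alpha_{nk} = q_n \delta_{nk}$, $d(n)=n$, and $x_k = e_k$), so $T$ is effectively determined, and $T$ is continuous because it is bounded; hence $T$ is effective.

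Finally, suppose $\B(\ell^2)$ admitted a computability structure in which every effective operator were a computable element. Then the constant sequence $(T,T,\ldots)$ would be a computable sequence in $\B(\ell^2)$, and \Norm would force $\|T\| \in \Rk$, contradicting the preceding paragraph. The main technical point is the classical non-computability of $c$; I would deliberately work in base three rather than base two to sidestep the ambiguity of dyadic expansions that could otherwise block the digit-reading argument.
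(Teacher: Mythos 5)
Your proposal is correct and follows essentially the same route as the paper: both build a diagonal (multiplication) operator on \(\ell^2\) whose diagonal entries are the computable stage-\(n\) partial sums of a Specker-style c.e.\ real coding an r.e.\ non-recursive set, so that the operator norm is that non-computable limit, and both justify effectivity via \Lin together with boundedness/continuity before concluding that \Norm must fail. Your base-three digit-reading refinement is a harmless extra precaution not present in the paper, which works in base two.
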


	\begin{proof}
		Define a multiplication operator \(a\) by
		\[
			(a \xi)_n = \left( \sum_{k=1}^{\infty} \frac{\mc{K}_n (k)}{2^k} \right) \xi_n
		\]
		where \(\mc{K}\) is a complete r.e. set and \(\mc{K}_n\) is the \(n\)th finite approximation of \(\mc{K}\).
		Then \(a\) is linear and bounded with norm \(r_\mc{K} \equiv \sum_{k=0}^{\infty} \frac{\mc{K}(k)}{2^k}\) which is a non-computable real number between 0 and 1.
		\Lin gives that \(a\) preserves computability of sequences, 
		since \(a\) multiplies a sequence of complex numbers with a computable sequence of rational numbers.
		\(a\) is continuous, and thus effective (by Sequential Effectivity).
	\end{proof}

\subsection{Ineffective operator norm on \(\B(\X)\)}

	We gives an example of in-effectivity of the norm on the space of bounded linear operators over a infinite dimensional Banach space. 
	The example is \(\X=\ell^2\), perhaps the simplest infinite dimensional Banach space.

	\begin{Prop}
		There exists no recursive \(g:\N \saa \N\) such that \((\forall e) g(e) > \|T^e\|\) for \(T^e \in \B(\ell^2)_{\op{k}}\).
	\end{Prop}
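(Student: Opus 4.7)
The plan is to diagonalize against any hypothetical $g$ by means of Kleene's recursion theorem, exploiting the fact that scalar multiplication operators $M_n:x\mapsto n\cdot x$ on $\ell^2$ are effective with operator norm exactly $n$.

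First I would establish that there is a recursive function $h:\N\saa\N$ with $\tau(h(n))=M_n$ for every $n\in\N$. Given an $\alpha$-index $e$ for a computable $x\in(\ell^2)_{\op{k}}$, one obtains an $\alpha$-index for $nx$ by replacing each rational approximation $a_{\varphi_e(k)}$ by $n\cdot a_{\varphi_e(k)}$ (which lies in $A$, uniformly in $n$) and shifting the modulus of convergence by $\lceil\log_2 n\rceil$. The resulting index-transformation is clearly uniformly recursive in $n$, is extensional, maps $\op{dom}\alpha$ into itself, and represents a linear operator of norm $n$, so $h(n)\in\op{dom}\tau$ with $\tau(h(n))=M_n$.

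Now suppose for contradiction that $g\in\mc{R}$ satisfies $g(e)>\|T^e\|$ for every $e\in\op{dom}\tau$. The function $\psi(e):=h(g(e)+1)$ is recursive, with $\tau(\psi(e))=M_{g(e)+1}$ for every $e$. By Kleene's recursion theorem choose a fixed point $e^*$ with $\varphi_{e^*}=\varphi_{\psi(e^*)}$ as partial functions. Since membership in $\op{dom}\tau$ and the value $\tau(d)$ depend on $d$ only through the partial function $\varphi_d$, and since $\psi(e^*)\in\op{dom}\tau$, it follows that $e^*\in\op{dom}\tau$ and $\tau(e^*)=\tau(\psi(e^*))=M_{g(e^*)+1}$. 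Therefore $\|T^{e^*}\|=g(e^*)+1>g(e^*)$, contradicting the assumed inequality.

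The main obstacle is purely bookkeeping: one must spell out that the index-transformation used to define $h$ produces a program meeting every clause in the definition of $\op{dom}\tau$ (totality, extensionality, preservation of $\op{dom}\alpha$, linearity, boundedness), so that the Kleene fixed point $e^*$ genuinely lands in $\op{dom}\tau$. Once this is verified, the recursion-theoretic step is mechanical, and the proof uses nothing special about $\ell^2$ beyond the existence of nontrivial effective scalar multiplication; the same argument gives the analogous statement on every nonzero effective Banach space.
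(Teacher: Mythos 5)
Your proof is correct, but it takes a genuinely different route from the paper's. The paper diagonalizes without the recursion theorem: for each pair \(e,x\) it builds, uniformly and effectively, the diagonal operator \(\mc{O}^{ex}\) on \(\ell^2\) with entries \(a_{ss}=\varphi_{e,s}(x)\) once the computation has halted by stage \(s\) and \(0\) before, so that \(\|\mc{O}^{ex}\|\geq\varphi_e(x)\) whenever \(\varphi_e(x)\downarrow\); a recursive majorant \(g\) of the norm would then make \(x\mapsto g(h(x,x))+1\) a total recursive function strictly majorizing \(\varphi_x(x)\) wherever defined, which the usual self-application argument refutes. That construction is tied to the matrix representation of operators on \(\ell^2\) and packages the non-uniformity of the norm into the operators themselves. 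You instead use only the scalar multiples \(M_n=nI\) together with Kleene's recursion theorem, and the crucial observation that \(\op{dom}\tau\) and the value \(\tau(d)\) depend on \(d\) only through the partial function \(\varphi_d\), so the fixed point \(e^*\) of \(\psi(e)=h(g(e)+1)\) inherits \(e^*\in\op{dom}\tau\) and \(\tau(e^*)=M_{g(e^*)+1}\) from \(\psi(e^*)\). Your version buys generality -- as you note, it works verbatim on any nonzero effective Banach space, since it needs nothing but one nonzero computable vector and effective integer scalar multiplication -- at the price of invoking the recursion theorem and of the bookkeeping you flag (verifying that \(h(n)\) lands in \(\op{dom}\tau\), which does go through: totality, extensionality, preservation of \(\op{dom}\alpha\) after the modulus shift by \(\lceil\log_2 n\rceil\), linearity and boundedness are all routine). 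The paper's version is more elementary in its recursion theory and exhibits a richer family of witnesses, namely effective operators whose norms encode halting information, which is thematically closer to the surrounding results on the ineffectivity of the operator norm.
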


	\begin{proof}
		On the Hilbert space \(\ell^2\), an operator is given by a infinite matrix \(a_{ij}\).
		From the index \(e\) for \(T^e\) we can effectively find a index \(f(e)\) for the sequence of matrix elements, 
		and conversely, since the matrix element \(a_{ij}=(e_i|Te_j)\) where \(\{ e_n \}\) is a orthonormal basis and an e.g.s.

		For each individual \(e,x \in \N\), consider the effective operator \(\mc{O}^{ex}\) given by 
		the diagonal matrix \(a_{ss}=\varphi_{e,s}(x)\) if \(\varphi_{e,s}(x) \downarrow\) and \(a_{ss}=0\) otherwise. 
		Let \(\mc{O}^{ex}\) ha index \(h(e,x)\) where \(h\) are recursive.

		If \((\forall e) g(e) > \|T^e\|\) we have then \(g(h(e,x)) > \|T^{h(e,x)} = \mc{O}^{ex} \| \geq \varphi_e(x)\). 
		Let \(k(x)=g(h(x,x))+1\). Then \(k\) recursive, so \((\exists c) k=\varphi_c\). But then are \(\varphi_c(c) > \varphi_c(c)\), a contradiction.
	\end{proof}

	\begin{Kor}
		The norm, restricted to operators with computable norm, is a ineffective operator in \(\B(\B(\ell^2),\R)\).
	\end{Kor}

\subsection{A uncountable computability structure}

	\begin{Theorem}
		The definition of computability structures in \cite{Pour} allows for uncountably many computable elements.
	\end{Theorem}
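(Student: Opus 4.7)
The plan is to exhibit a Banach space equipped with a computability structure (in the axiomatic sense of \cite{Pour}) whose set of computable elements is uncountable. I take \(H = \ell^2(I)\) for an uncountable index set \(I\), with canonical orthonormal basis \(\{e_\alpha : \alpha \in I\}\). The space \(H\) is non-separable, which the excerpt has already noted is allowed. My objective is to build a computability structure \(S\) on \(H\) in which every basis vector \(e_\alpha\) is a computable element.

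For each countable \(J \delm I\) and each enumeration \(\sigma:\N \saa J\) (repetitions permitted), the sequence \((e_{\sigma(n)})_n\) has dense linear span in the separable subspace \(\ell^2(J) \delm H\). Let \(S_{J,\sigma}\) denote the standard Pour-El--Richards computability structure on \(\ell^2(J)\) with \((e_{\sigma(n)})\) as an e.g.s., and define
\[
    S \;\equiv\; \bigcup_{(J,\sigma)} S_{J,\sigma}.
\]
I would then verify that \(S\) satisfies the three axioms. For \Norm and \Lim the check is essentially free: any sequence, or double sequence unrolled under the standard pairing \(\N\times\N\to\N\), that sits in \(S\) in fact lies in some single \(S_{J,\sigma}\), and the axiom is inherited from the separable Pour-El--Richards theory applied to that slice (observing that the norm of \(H\) restricts to the norm on \(\ell^2(J)\)). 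The real content is \Lin: given \((x_n) \in S_{J_1,\sigma_1}\) and \((y_n) \in S_{J_2,\sigma_2}\), set \(J = J_1 \cup J_2\) and define \(\sigma:\N \saa J\) by \(\sigma(2k)=\sigma_1(k)\), \(\sigma(2k+1)=\sigma_2(k)\). Since \(\sigma_1 = \sigma\circ(k\mapsto 2k)\) and \(\sigma_2 = \sigma\circ(k\mapsto 2k+1)\), a routine recursive reindexing (zero coefficients at the unused positions) rewrites the Effective Density expansions of \((x_n)\) and \((y_n)\) in terms of \((e_{\sigma(n)})\), placing both inside \(S_{J,\sigma}\); then \Lin inside \(S_{J,\sigma}\) delivers the combined sequence back into \(S_{J,\sigma} \delm S\).

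Once \(S\) has been validated as a computability structure, computability of each \(e_\alpha\) is witnessed by \(J = \{\alpha\}\) with \(\sigma\) the constant function with value \(\alpha\); the constant sequence \((e_\alpha,e_\alpha,\ldots)\) trivially belongs to \(S_{\{\alpha\},\sigma}\). Hence \(S\) has at least \(|I|\) many computable elements, which is uncountable. The one spot demanding care is inside \Lin: I must be sure that an interleaved enumeration with possible repetitions still qualifies as an e.g.s.\ (repetitions harm neither denseness nor effectivity of rational linear combinations) and that the reindexing \(\sigma_i = \sigma\circ(k\mapsto 2k+i)\) really is a recursive embedding of \(S_{J_i,\sigma_i}\) into \(S_{J,\sigma}\); every other step reduces transparently to the already-available separable Pour-El--Richards theory inside each slice \(S_{J,\sigma}\).
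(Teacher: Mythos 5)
Your proof is correct, but it takes a genuinely different route from the paper. The paper stays inside its operator-space theme: it takes \(\X\) to be the non-separable space of almost periodic functions with orthonormal basis \(\{e^{i\lambda t}\}_{\lambda\in\R}\), and builds a computability structure on \(\B(\X)\) generated by \emph{all} rank-one projections, closed under computable linear combinations and effective limits; the work there is verifying the norm axiom via an explicit formula \(\|\sum_k \alpha_{nk}x_k+\beta_{nk}y_k\| = \max_k\{\alpha_{nk},\beta_{nk}\}\) for combinations of orthogonal rank-one projections, plus commutation of the norm with effective limits. You instead work directly in a non-separable Hilbert space \(\ell^2(I)\) and take the directed union of the standard separable structures \(S_{J,\sigma}\) over countable sub-bases; since each axiom has at most two sequence hypotheses, everything reduces to the separable Pour-El--Richards theory once you show any two slices embed in a common one, which your interleaving-plus-reindexing argument does (and repetitions in an e.g.s.\ are indeed harmless). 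Your construction is more modular and arguably more robust -- it avoids the delicate norm computation for projection combinations (which in the paper tacitly needs the subspaces mutually orthogonal and the coefficients taken in absolute value) and generalizes to any non-separable space exhausted by effectively separable pieces. What the paper's choice buys is contact with its surrounding discussion: its structure lives on \(\B(\X)\), which feeds the follow-up remark that the resulting structure is ``too large but also too small since it only contains compact operators''; your example does not speak to operator spaces. Both are valid witnesses for the stated theorem.
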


	\begin{proof}
		Let \(\X\) be it non-separable Banach space of {\em almost periodic functions} that have orthonormal basis \(\{e^{i \lambda t} \}_{\lambda \in \R}\) 
		and norm \(\| f \|^2 = \lim_{T \saa \infty} \frac{1}{T} \int_{0}^{T} |f(x)|^2 dx\). We do not specify any computability structure for \(\X\).

		We shall show that there exists a computability structure on \(\B(\X)\) that contains all projections on one-dimensional subspaces.
		Since \(\X\) has uncountable dimension, there exists uncountably many one-dimensional projection operators on \(\X\).

		The computability structure is obtained by starting with all one-dimensional projection operators and taking
		the closure under computable linear combinations and effective limes. 
		We must show that \Norm then becomes satisfied.

		Let \((x_k)\) and \((y_k)\) be computable sequences of projection operators with rank \(1\).

		By Effective Density it suffices to consider the norm of an effective limit and the norm of a computable linear combination.

		We have \(\| \sum_{k=0}^{d(n)} \alpha_{nk} x_k + \beta_{nk} y_k \| = \op{max}_{k=0}^{d(n)} \{ \alpha_{nk}, \beta_{nk} \}\) 
		since the norm is realized in an element in the one-dimensional subspace where the weight \(\alpha\) is maximized. 
		We may assume that all \(x_k\) are distinct, 
		since the coefficient of a linear combination of a given operator equals the sum of the coefficients in the linear combination.

		If \(\lim_m^* x_{mn}\) exists then \(\|\lim_m^* x_{mn} \| =\lim_m^* \| x_{mn} \|\), 
		and the computable real numbers are closed under effective limits.
	\end{proof}

	This computability structure is too large, but also too small since it only contains compact operators.
	Effective Banach space have countable computability structures, since the computable elements are computable linear combinations of elements in the e.g.s.

	\paragraph{\(T,S \mapsto T+S\) does not preserve computable norm}
		The operator norm has many ineffective properties: also the sum of two operators with computable norm can have non-computable norm. 
		It does not help that the norm of a sum has a well known approximation known as {\em the triangle inequality}: \(| \|T\|-\|S\| | \leq \|T+S\| \leq \|T\| + \|S\|\). 
		It follows that we cannot find the distance \(d(T,S)=\|T-S\|\) between two operators with computable norm, 
		even though thus \(| \|T\|-\|S\| | \leq \|T-S\| \leq \|T\| + \|S\|\).

		\begin{Prop}
			There exist effective operators \(R = (r_{i,j})\) and \(S=(s_{i,j})\) on \(\ell^2\) with computable norms \(\|R\|\) and \(\|S\|\), 
			such that \(\|R+S\|\) is non-computable.
		\end{Prop}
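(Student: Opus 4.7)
The plan is to construct two diagonal operators on $\ell^2$ that recycle the Specker-real idea from the preceding theorem on effective operators with non-computable norm, splitting the obstruction so that each summand's norm is attained at a single, fixed coordinate (hence trivially computable) while the norm of the sum is attained only in the limit.

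First I would fix a non-computable left-c.e.\ real $r \in (1/2,1)$ written as $r = \sup_n q_n$ for a nondecreasing computable sequence of rationals $q_n \in (1/2,1)$ with $q_n < r$ for all $n$; such an $r$ is built by a minor modification of the $r_{\mc{K}}$ from the earlier theorem, for instance $r = \tfrac12 + \tfrac14 r_{\mc{K}}$. In the orthonormal basis $\{e_n\}_{n \ge 0}$ of $\ell^2$ (an e.g.s.) I would let $R$ and $S$ be the diagonal operators with diagonals
\[
(a_n) = (1,\, 0,\, q_2,\, q_3,\, \ldots) \qquad\text{and}\qquad (b_n) = (0,\, 1,\, q_2,\, q_3,\, \ldots).
\]

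Next I would verify that each of $R$ and $S$ is effective by exactly the argument used in the preceding theorem: each is multiplication by a computable sequence of rationals of modulus $\le 1$, so by \Lin these operators preserve computability of sequences, and they are bounded linear, hence effective by Sequential Effectivity. Since $\sup_n |a_n|$ and $\sup_n |b_n|$ are attained at $n=0$ and $n=1$ respectively, both $\|R\|$ and $\|S\|$ equal $1$ and are trivially computable. The diagonal of $R+S$ is $(1,\,1,\,2q_2,\,2q_3,\,\ldots)$, so
\[
\|R+S\| \;=\; \max\!\left(1,\, \sup_{n\ge 2} 2q_n\right) \;=\; \max(1,\,2r) \;=\; 2r,
\]
using $r > 1/2$; and $2r$ inherits non-computability from $r$.

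The only real obstacle is a conceptual one rather than a computation: one must notice that the slack in the triangle inequality $\|R+S\| \le \|R\|+\|S\|$ is exactly where the non-computability can hide, because $\sup_n(a_n+b_n)$ can be strictly less than $\sup_n a_n + \sup_n b_n$ whenever the two suprema are attained at different coordinates. Once this is spotted, the construction above is short and every verification reduces to a fact already established earlier in the paper.
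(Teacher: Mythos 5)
Your proof is correct and takes essentially the same approach as the paper's: both split a diagonal Specker-type operator into two diagonal summands whose individual norms are pinned to a computable value by a designated dominant entry, while the norm of the sum recovers the non-computable supremum \(2r\). The paper does this with a cancelling pair \(M,-M\) in the \((1,1)\)-entry; your variant with two distinct distinguished coordinates is the same idea and works just as well.
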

		\begin{proof}
			Choose an integer \(M > \|T\|\), let \(r_{1,1}=M\), \(s_{1,1}=-M\), and for \(j>1\), \(r_{j,j} = s_{j,j} = t_{j,j}\). 
			Then \(\|R\|=\|S\|=M\), but \(R+S=T\) so \(\|R+S\|=\|T\|=r_\mc{K}\).
		\end{proof}

		\begin{Prop}
			There exist effective operators \(P\) and \(Q\) on \(\ell^2\) with
			\[ \|P\|, \|Q\|, \lim_n \|Pe_n\|, \lim_m \|Qe_m\| \in \Rk, \|P+Q\| \not\in \Rk \]
		\end{Prop}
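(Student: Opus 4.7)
My plan is to realize $P$ and $Q$ as block-diagonal operators on $\ell^2$ whose blocks are mutually inverse plane rotations, so that the non-computability of the real $r_\mc{K}$ arising in the earlier theorem on the multiplication operator is hidden inside the off-diagonal entries of the individual blocks and appears on the diagonal of their sum. Pair up the standard basis into blocks $(e_{2k-1}, e_{2k})$ for $k \in \N$, set $r_k = \sum_j \mc{K}_k(j) 2^{-j}$, and define $P$ and $Q$ block-diagonally by
\[
P_k = \begin{pmatrix} r_k & -\sqrt{1-r_k^2} \\ \sqrt{1-r_k^2} & r_k \end{pmatrix}, \qquad Q_k = \begin{pmatrix} r_k & \sqrt{1-r_k^2} \\ -\sqrt{1-r_k^2} & r_k \end{pmatrix},
\]
i.e.\ rotations by $\pm \arccos r_k$. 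Since $r_k$ is rational and $\sqrt{1-r_k^2}$ is computable uniformly in $k$, both $P e_n$ and $Q e_n$ are computable uniformly in $n$, so by Sequential Effectivity $P$ and $Q$ are effective operators.

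Verification of the four computability conditions is immediate because each $P_k$ and each $Q_k$ is orthogonal. Rotations preserve norms, hence $\|P e_n\| = \|Q e_n\| = 1$ for every $n$, so $\|P\| = \|Q\| = \sup_k \|P_k\| = 1$ and $\lim_n \|P e_n\| = \lim_n \|Q e_n\| = 1$, all in $\Rk$. For the sum the oppositely oriented rotations cancel on the off-diagonal:
\[
P_k + Q_k = \begin{pmatrix} 2 r_k & 0 \\ 0 & 2 r_k \end{pmatrix} = 2 r_k \cdot I_2,
\]
so $\|P + Q\| = \sup_k 2 r_k = 2 r_\mc{K} \notin \Rk$ by the same left-r.e.\ non-computability argument as in the first counterexample of this section.

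The main obstacle to avoid is that the most natural diagonal splittings do not work. If $P$ and $Q$ are both diagonal with $p_n + q_n$ approaching $r_\mc{K}$ along the diagonal, then requiring $\lim |p_n|, \lim |q_n| \in \Rk$ forces the non-computability of $r_\mc{K}$ into one of the column-norm limits. The rotation trick sidesteps this by placing the non-computable data in the angle $\theta_k = \arccos r_k$ rather than in the column magnitudes; rotations being unitary keeps the four `nice' scalars trivially computable, while adding a rotation and its inverse through the same angle yields the scalar $2\cos\theta_k = 2 r_k$ on the diagonal of the sum, exposing the non-computable supremum.
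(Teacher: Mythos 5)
Your construction is correct, and at its heart it is the same trick the paper uses: make $P$ and $Q$ unitary, so that all four of the required scalars are trivially equal to $1$, and arrange for the non-computable real to surface only as $2\cos$ of an angle in the sum. The paper's version takes $P$ diagonal with entries $e^{i\theta_j}$ for a decreasing computable rational sequence $\theta_j \searrow \theta$ and simply puts $Q=I$, so that $\|P+Q\|=\sup_j|e^{i\theta_j}+1|=2\cos(\theta/2)$; your version is essentially the real form of this under $\C\cong\R^2$, with $Q=P^{-1}=P^{T}$ in place of $Q=I$, so that $R_{\phi_k}+R_{-\phi_k}=2\cos\phi_k\cdot I_2=2r_k I_2$. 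The differences are worth noting. The paper's choice is shorter (diagonal matrices, one of the two operators is the identity), but it leaves implicit both the existence of a non-computable right-c.e.\ $\theta$ and the fact that $\theta\mapsto 2\cos(\theta/2)$ carries a non-computable $\theta\in[0,\pi/2]$ to a non-computable value (true, since $\cos$ is a computable homeomorphism there with computable inverse, but it is an extra step). Your choice of encoding the angle by $\cos\phi_k=r_k$ instead of by $\phi_k$ itself lets you land exactly on $2r_{\mc{K}}$, so the non-computability argument from the first counterexample of the section applies verbatim, and the construction works over $\R$ as well as $\C$.

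Two small points to pin down. First, you need $r_k\le 1$ for $\sqrt{1-r_k^2}$ to be real; with the convention $r_k=\sum_{j\ge 1}\mc{K}_k(j)2^{-j}$ (consistent with the paper's claim that $r_{\mc{K}}\in(0,1)$) this holds, but say so. Second, the step from ``$\{Pe_n\}$ is a computable sequence'' to ``$P$ is effective'' is not Sequential Effectivity alone: Sequential Effectivity needs $P$ to preserve computability of \emph{arbitrary} computable sequences, which for your non-diagonal $P$ you get from boundedness ($\|P\|=1$) together with the First Main Theorem (or, equivalently, from the characterization ``effectively determined and continuous implies effective''). The ingredients are all present in your write-up; just route the citation through the First Main Theorem.
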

		\begin{proof}
			Let \(P\) be given by a diagonal matrix \((p_{j,j}=e^{i\theta_j})\) where \((\theta_j)\) is a monotonically decreasing positive rational sequence with limit 
			\(\theta \in [0, \frac{\pi}{2}]\), and let Q=I (the identity operator).
			Then \(\|P\|=\|Q\|=\lim \|Qe_n\| = \lim \|Pe_n\|=1\), whereas \(\|P+Q\|=sup_n |e^{i \theta_n} + 1| = |e^{i \theta} + 1|\) is non-computable.
		\end{proof}

	The norm is defined as a supremum over \(\X\). Since the computable points in \(\X\) are dense, we can bound the supremum to vary over computable points. 
	If we do so, we see that the supremum cannot be achieved (since the operator is effective) if the norm is a non-computable real number. 
	There must exist a sequence \((x_n)\) on the unit ball such that \(\| Tx_n \| \saa \| T \|\).

\section{Examples of effective operator norm}
	\subsection{\(\B(\C^n)\)}
		Banach space of dimension \(n\) are Hilbert space. They are moreover all homeomorphic and isomorphic with \(\mb{F}^n\). 
		Line{\ae}re operators on these spaces are therefore represented by \(n \times n\)-matrices.

		For operators on finite dimensional Hilbert space we have

		\[ \| A \| = \op{max} \{ \sqrt{|\lambda|} | (\exists x \neq 0)(  A^* A x = \lambda x ) \} \]

		To find the norm it is thus enough to find they finite many eigenvalues to \(A^* A\) 
		and thereafter pick one that in absolute value is greater or equal all the others.
		One can effectively find eigenvalues \(\lambda\) to a matrix \(B\) by to solve the \(n\)th degree equation \(\op{det}(\lambda I - B)=0\). 
		This can be done effectively by the effective version of the fundamental theorem of algebra:

		\begin{Theorem}[Effective version of the fundamental theorem of algebra]
			\label{th:EFA}
			La
			\[
				p(x)=\sum_{k=1}^n \alpha_k x^k
			\]
			 be a polynomial with computable complex coefficients \(\{ \alpha_k \}_{k=1}^n\). 
			Then there exists \(n\) computable complex numbers \(\{ \beta_k \}_{k=1}^n\) such that
			\[
				p(x) = \prod_{k=1}^n (x-\beta_k)
			\]
			\(\{ \beta_k \}_{k=1}^n\) can be found effectively from \(\{ \alpha_k \}_{k=1}^n\).
		\end{Theorem}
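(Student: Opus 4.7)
The plan is by induction on the degree $n$. The base case $n=0$ is vacuous (empty product). For the inductive step it suffices to find a single computable root $\beta \in \Ck$ of $p$; then polynomial long division yields $p(x) = (x-\beta)q(x)$, the coefficients of $q$ being obtained from those of $p$ and from $\beta$ by elementary arithmetic in $\Ck$, and the inductive hypothesis applied to $q$ produces the remaining $n-1$ roots. We first normalize $p$ to be monic by dividing through by the leading coefficient $\alpha_n \in \Ck$, which is nonzero since $\deg p = n$.

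To produce one computable root, compute a Cauchy-type bound $R = 1 + \max_{k<n} |\alpha_k| \in \Rk$ inside which every root of $p$ must lie, and run a Weyl-style quadtree search on the closed square $Q_0 = [-R,R]^2$: subdivide $Q_0$ into four congruent closed subsquares, and for each subsquare $Q$ attempt in parallel to certify one of two alternatives, (a) $\inf_{z \in Q} |p(z)| > 0$, or (b) $Q$ contains a root of $p$. Both assertions are r.e., although neither is decidable in general: (a) because $|p|$ is a computable continuous function on $Q$ with a computable modulus of uniform continuity, obtained from any effective upper bound on $|p'|$ over $Q$, so a sufficiently fine rational grid evaluation certifies a positive lower bound whenever one exists; (b) by the minimum modulus principle, since if we observe a rational interior point $z_0 \in Q$ whose computed value $|p(z_0)|$ is strictly smaller than a certified positive lower bound for $|p|$ on a rational grid along $\partial Q$ (together with the same Lipschitz control), then $p$ must vanish inside $Q$. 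Because $p$ has at least one root in $Q_0$, at least one subsquare at each level admits certificate (b); commit to the first such one to appear. Iterating halves the diameter at each step and produces a nested sequence $Q_0 \supseteq Q_1 \supseteq \cdots$ of squares, each containing a root, whose rational centers form an effectively Cauchy sequence defining a computable $\beta \in \Ck$ with $p(\beta)=0$.

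The main obstacle is that a subsquare whose boundary grazes a root is neither r.e.-certifiable as root-free nor as root-containing, so a naive depth-first descent could stall indefinitely on a particular subdivision. The remedy is the dovetailing above: at each level we run the tests for all four subsquares concurrently, and we advance to the next level of subdivision as soon as some subsquare produces certificate (b), shifting the quadtree origin by a small rational amount if needed to avoid a root lying on a dividing line. A counting argument using the fact that $p$ has at most $n$ roots in $Q_0$ guarantees that such a certificate always appears in finite time, and by passing to a subsequence if necessary we obtain effective geometric convergence of the centers. Applying the induction hypothesis to $q(x) = p(x)/(x-\beta)$ completes the proof, and uniformity of every step in the input coefficients shows that $\{\beta_k\}_{k=1}^n$ is found effectively from $\{\alpha_k\}_{k=1}^n$.
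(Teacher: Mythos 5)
Your strategy --- locate one root by a subdivision search with semi-decidable certificates, deflate by synthetic division, and induct on the degree --- is a correct and considerably more complete version of the two-line sketch the paper gives (``draw the graph on a grid with progressively smaller squares''). In particular you supply the two ingredients the sketch omits: a soundness certificate for ``this square contains a root'' (via the minimum modulus principle), without which small observed values of \(|p|\) on a grid prove nothing, and the deflation step needed to extract all \(n\) roots with multiplicity and keep the construction uniform in the coefficient indices.

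One step needs tightening. The claim that ``at least one subsquare at each level admits certificate (b)'' can fail when the only roots of \(p\) in the current square lie on the dividing lines, and your proposed repair --- ``shifting the quadtree origin by a small rational amount if needed'' --- is not yet an algorithm, because whether the shift is needed is exactly the undecidable boundary-grazing situation you yourself identified. The fix is to make the shift unconditional: at each level, dovetail certificate (b) over the subsquares of, say, \(n+2\) subdivision grids with distinct rational offsets (or over all rational squares of half the diameter meeting the current square); since \(p\) has at most \(n\) roots, all but finitely many offsets place no root on a grid line, so some certificate fires in finite time, and the centers still converge with a geometric modulus. Alternatively, after your monic normalization you can replace the minimum-modulus certificate by the elementary bound \(\operatorname{dist}(z,p^{-1}(0)) \leq |p(z)|^{1/n}\), valid for monic \(p\) of degree \(n\): the certificate ``there is a rational interior point \(z\) of \(Q\) with \(|p(z)|^{1/n} < \operatorname{dist}(z,\partial Q)\)'' is r.e., sound (the nearest root then lies in \(Q\) by convexity), and fires whenever the interior of \(Q\) contains a root even if the boundary also does, which removes the boundary-grazing problem and the offset bookkeeping entirely. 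With either repair the rest of your argument (computability of the deflated coefficients, the induction, and uniformity in indices for the \(\alpha_k\) given \(\alpha_n \neq 0\)) goes through.
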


		Proof sketch: 
			We can effectively find a compact interval \(K\) where all the zeros must be located. 
			By drawing the graph of the polynomial on \(K\) in a grid with progressively smaller squares, 
			we can effectively approximate the zeros.

		\begin{Prop}
			There exists a computability structure on \(\B(\C^n)\) where the computable elements are 
			the matrices whose matrix elements are computable scalars.
		\end{Prop}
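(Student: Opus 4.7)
The plan is to declare a sequence $(T_k)$ in $\B(\C^n)$ to be computable precisely when the induced family of matrix entries $((T_k)_{ij})_{k \in \N,\, 1 \leq i,j \leq n}$ is a computable triple sequence in $\Ck$, and then to verify the three axioms \Lin, \Lim, and \Norm for this class. Since by definition a single element is computable iff its constant sequence is computable, it will then follow immediately that the computable elements are exactly the matrices whose entries lie in $\Ck$.

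The axiom \Lin is immediate: the entries of a computable linear combination of matrices are the corresponding linear combinations of the original entries, and $\Ck$ is closed under such operations uniformly in the indices. For \Lim, since $\B(\C^n)$ is finite dimensional all norms on it are equivalent, so $|(T)_{ij}| \leq \|T\| \leq C \max_{ij} |(T)_{ij}|$ for a constant $C = C(n)$; hence effective convergence in operator norm is equivalent, with a computable change of modulus, to effective entrywise convergence, and an effective limit of matrices with computable entries again has computable entries.

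The heart of the argument is \Norm. Given a computable sequence $(T_k)$, the entries of $T_k^* T_k$ are fixed polynomial expressions in the entries of $T_k$ and their conjugates, hence form a computable sequence in $\Ck$. The coefficients of the characteristic polynomial $\det(\lambda I - T_k^* T_k)$ are themselves polynomials in these entries, so they too are computable uniformly in $k$. Applying Theorem~\ref{th:EFA} uniformly produces computable eigenvalues $\lambda_1^{(k)}, \ldots, \lambda_n^{(k)}$ of $T_k^* T_k$ from these coefficients. Since the maximum of finitely many uniformly computable reals is computable, and both absolute value and square root preserve computability, the sequence $\|T_k\| = \max_\ell \sqrt{|\lambda_\ell^{(k)}|}$ is a computable sequence of reals, establishing \Norm.

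The main potential obstacle is the uniformity of Theorem~\ref{th:EFA}: the grid-search argument sketched there has to be carried out with an enclosing compact region for the roots depending computably on the coefficient data. This follows from standard effective root bounds such as the Cauchy bound $|\lambda| \leq 1 + \max_{k<n} |\alpha_k/\alpha_n|$, so no genuine difficulty arises and the proposal goes through.
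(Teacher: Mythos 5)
Your proposal is correct and follows essentially the same route as the paper: you verify \Lin{} directly, handle \Lim{} by passing between operator-norm convergence and entrywise convergence, and establish \Norm{} by computing the eigenvalues of \(T^*T\) via the effective fundamental theorem of algebra (Theorem~\ref{th:EFA}) and taking the maximal \(\sqrt{|\lambda|}\). Your explicit attention to uniformity in the sequence index and to an effective root bound makes the argument somewhat more careful than the paper's, but it is the same proof.
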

		\begin{proof} 
			We have seen that the norm is effective on these. 
			The set of \(n \times n\)-matrices with computable scalars are clearly closed under linear combinations with computable scalars. 
			If the matrices \(\{ A_n \}\) converges effectively to \(A\) in norm, 
			then also each individual matrix element converges effectively, so \(A\) must also have computable scalars. 
		\end{proof}

		The norm in \(\B(\C^n)\) are effective also if the operator are given by its graph, 
		for from the graph we can effectively find the matrix elements 
		\(a_{mn} = (e_n|Te_m)\) where \(\{ e_n \}\) is a standard orthonormal basis.

	\subsection{\(\B_f(\X)\) and \(\B_0(\X)\)}

		Let \(\B_f(\X)\) be the class of operators \(T \in \B(\X)\) with finite rank, i.e. such that the dimension to \(T[\X]\) is finite. 
		Note that when \(\X\) is infinite dimensional these can, in contradistinction to operators in \(\B(\C^n)\), 
		not be described by finitely many scalars, since a vector in \(\X\) generally cannot be described by finitely many scalars.

		\(\B_f(\X)\) are not closed in \(\B(\X)\) and therefore not a Banach space.

		The set \(\B_0(\X) \delm \B(\X)\) of {\em compact} operators best{\aa}r of the \(T\) such that 
		for each bounded sequence \(\{ x_n \}\) have the sequence \(\{ T x_n \}\) a accumulation point.
		\(\B_0(\X)\) is a separable closed subspace of \(\B(\X)\), and thus a potentially effective Banach space.

		A Banach space \(\X\) is said to have {\em the approximation property} if it for each compact operator \(K\) there is a sequence 
		\(\{ K_n \}\) of operators of finite rank such that \(\| K_n - K \| \saa 0\).

		Separable Hilbert space \(\H\) have the approximation property, but 
		it is not known whether \(\B(\H)\) has the approximation property (se \cite{Ban}). 
		The approximation property is a property of Banach space, not a topological property. 
		It is also essentially a Banach space property of \(\H\) and not of \(\B(\H)\), 
		for if we consider \(\B(\H)\) just as a Banach space we look at elements just as vectors and not as operators.

		Per Enflo proved that not all separable Banach space have the approximation property, 
		with corollary that not all separable Banach space have a Schauder basis.

		Effective normal compact operators \(T\) have computable norm (\(\|T\| \in \Rk\)). 
		This follows since the norm is the absolute value of one of the eigenvalues, 
		and all eigenvalues of a normal operator are computable according to the following theorem from \cite{Pour}.

		\begin{Theorem}[Pour-El \& Richards Second Main Theorem]
			Let \(T\) be a normal operator on an effective Hilbert space. 
			Then all eigenvalues to \(T\) computable, but the sequence of eigenvalues need not be.
		\end{Theorem}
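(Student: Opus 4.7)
The plan is to prove the two assertions separately: a positive effective-spectral result giving individual eigenvalues, and a diagonal construction blocking any effective enumeration.

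For the first claim, let \(T\) be normal on an effective Hilbert space \(\H\) with e.g.s.\ \(\{e_n\}\). I would first treat the compact normal case, which is the setting implicitly needed before the theorem (since the theorem is quoted to justify computability of the norm of effective normal compact operators). Let \(P_n\) be the orthogonal projection onto \(\op{span}\{e_0,\ldots,e_n\}\); the compressions \(T_n = P_n T P_n\) are finite matrices whose entries \((e_i\mid T e_j)\) are computable uniformly in \(i,j,n\) (using effectivity of \(T\), vector operations, and the inner product derived from the norm by polarization). Each \(T_n\) is normal on its finite-dimensional range, so by the effective fundamental theorem of algebra (Theorem~\ref{th:EFA}) its eigenvalues can be computed uniformly. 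For compact \(T\) one has \(\|T_n - T\| \to 0\), and for normal operators the min--max principle gives that the eigenvalues of \(T_n\) (counted with multiplicity, ordered by decreasing modulus) converge to those of \(T\) with an explicit bound in terms of \(\|T_n - T\|\). Combined with an effective bound on this convergence (obtained from the computability of \(\|T\|\) on finite-rank data), each individual eigenvalue of \(T\) is an effective limit of computable rationals, hence computable. For general bounded normal \(T\), I would replace norm-convergence by the continuous functional calculus: given a rational candidate disk \(B(q,r)\), form \(f(T)\) where \(f\) is a computable bump function of height \(1\) supported on \(B(q,r)\); then \(\|f(T)\| \geq 1 - \varepsilon\) witnesses an eigenvalue nearby, and iterated refinement of \(q,r\) produces a geometrically convergent computable sequence converging to the eigenvalue.

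For the second claim I would construct an effective diagonal normal operator whose eigenvalues cannot be listed as a computable sequence. Take \(T = \op{diag}(\mu_0,\mu_1,\ldots)\) on \(\ell^2\) with \(\mu_n\) rational and \((\mu_n)\) a computable sequence, so \(T\) is effective by \Lin and Sequential Effectivity. Arrange the \(\mu_n\) to encode a non-recursive r.e.\ set \(\mc{K}\): for instance let \(\mu_{\la n,s\ra} = 1 - 2^{-n}\) if \(n \in \mc{K}_s \setminus \mc{K}_{s-1}\), and \(\mu_{\la n,s\ra} = 0\) otherwise. Each \(\mu_n\) is individually (indeed rationally) computable, but the natural enumeration of distinct non-zero eigenvalues, or the enumeration counted with multiplicity by decreasing magnitude, requires decisions of the form ``does \(n\) ever enter \(\mc{K}\)'' which is \(\Sigma_1\)-complete. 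Any candidate recursive \(f\) listing the eigenvalues would allow one to decide \(\mc{K}\) by searching for \(1 - 2^{-n}\) in the range of \(f\) with effective precision, contradicting non-recursiveness of \(\mc{K}\).

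The main obstacle is Part 1 in its full generality. For compact normal operators the min--max argument is clean, but for arbitrary bounded normal \(T\) finite-rank compressions need not converge in norm, only in the strong operator topology, which does not preserve spectra in a quantitative way. The functional-calculus workaround requires a genuinely effective continuous functional calculus on \(\B(\H)_{\op{k}}\): one must check that \(f \mapsto f(T)\) is an effective operation from computable continuous functions on \([-\|T\|,\|T\|]\) (or a computable upper bound thereof) to \(\B(\H)_{\op{k}}\), and that \(\|f(T)\|\) is a computable real. This reduces to the spectral theorem and polynomial approximation (Stone--Weierstrass), both of which can be made effective, but assembling the bounds uniformly so that the disk-refinement procedure terminates with a geometric modulus is the delicate step.
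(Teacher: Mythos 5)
The paper does not prove this statement---it is imported from \cite{Pour}---so your argument has to stand on its own, and both halves have genuine gaps. In part (a) the detection criterion is wrong: for a normal operator the continuous functional calculus gives \(\|f(T)\| = \sup\{|f(z)| : z \in \op{sp}(T)\}\), so the test \(\|f(T)\| \geq 1-\varepsilon\) certifies only that the \emph{spectrum} meets the support of \(f\), not that an eigenvalue does. Your disk-refinement therefore converges to arbitrary spectral points, and these need not be computable: the multiplication operator \(a\) constructed later in this very paper is effective, its eigenvalues are its (rational) diagonal entries, yet its spectrum contains the non-computable real \(r_\mc{K}\) as a limit of those entries, and your procedure launched near \(r_\mc{K}\) would ``compute'' it. The repair---which is the actual content of Pour-El and Richards' argument---is non-uniform: fix a unit eigenvector \(x\) for the given eigenvalue \(\lambda\), take a computable unit vector \(y\) with \(\|x-y\|\) small, and test \(\|f(T)y\|^2 = \int |f|^2 \, d\mu_y\), where the spectral measure \(\mu_y\) has an atom of mass close to \(1\) at \(\lambda\); then the bump with large response is uniquely the one covering \(\lambda\). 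Your compact-operator route has a parallel defect: \(\|P_n T P_n - T\| \saa 0\) holds for compact \(T\) but admits no computable modulus in general (an effective diagonal compact operator whose entries encode the enumeration of \(\mc{K}\) defeats any recursive bound), so the ``effective bound on this convergence'' you invoke does not exist.

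Part (b) fails outright. The eigenvalue set of your operator is \(\{0\} \union \{1-2^{-n} : n \in \mc{K}\}\), an r.e.\ set of rationals, and such a set \emph{is} the range of a computable sequence of reals: enumerate \(\mc{K}\) and output \(1-2^{-n}\) as each \(n\) appears, padding with \(0\)'s. Your proposed reduction needs to certify that \(1-2^{-n}\) is \emph{absent} from the range of the listing in order to conclude \(n \notin \mc{K}\), and that is a \(\Pi_1\)-type condition, not a finite search, so no contradiction is obtained. (The decreasing-magnitude reading does not rescue it: \(1-2^{-n}\) increases in \(n\) and the supremum is not attained, so no decreasing enumeration exists for \emph{any} listing, computable or not.) A correct counterexample must produce a set of individually computable eigenvalues that is not the range of any computable sequence of reals, which is exactly the delicate part of the construction in \cite{Pour} and requires encoding negative information about \(\mc{K}\) (e.g.\ via multiplicities or mergers of nearby eigenvalues), not merely its enumeration.
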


		If we on Banach space \(\X\) with the approximation property limit ourselves 
		to they effective compact operators that are {\em effectively compact}, 
		effective limits for finite rank operators, becomes the operator norm an effective operation:

		Let \((x \odot y)(z)=(z|y)x\), where \(x,y \in \H\). Then the set of rank \(1\) operators equals \(\{ x \odot y | x,y \in \H \}\).

		The set of rank \(n\) operators are \(\{ \sum_{i=1}^n x_i \odot y_i | x_i, y_i \in \H \}\).

		The set of finite rank operators are \(\B_f(\H)= \{ \sum_{i=1}^n x_i \odot y_i | n \in \N, x_i, y_i \in \H \}\).

		The set of compact operators, the closure of the set of finite rank operators, are 
		\(\B_0(\H) = \{ \lim_{n \saa \infty} \sum_{i=1}^{d(n)} x_{i,n} \odot y_{i,n} | x_{i,n}, y_{i,n} \in \H \}\).

		\begin{Theorem}
			Let \(\X\) be a Banach space with the approximation property.
			Then the norm an effective operation for effectively compact operators on \(\X\).
		\end{Theorem}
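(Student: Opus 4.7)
The plan is to exploit the approximation property in its effective form. By effective compactness, $T$ admits a computable sequence $\{K_n\}$ of finite-rank effective operators with $\|K_n-T\|<2^{-n}$ effectively in $n$. The reverse triangle inequality $|\,\|K_n\|-\|T\|\,|\le\|K_n-T\|$ then yields $\|T\|=\lim_n^*\|K_n\|$ with modulus $n$. Since $\Rk$ is closed under effective limits, it suffices to show that $(\|K_n\|)$ is a computable sequence of reals, which reduces uniformly to a single claim: the norm of any effective finite-rank operator $K$ on $\X$ is a computable real obtainable from its index.

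For such a $K$, I would first effectively produce an independent set $y_1,\dots,y_r$ spanning the range $V=K[\X]$ by selecting a maximal independent subsequence from the computable sequence $\{Ke_n\}$ of images of an e.g.s.; uniform effectivity here is delicate but possible once one has a bound on $r$ from the effectively-compact data. Writing $Kx=\sum_{i=1}^r\phi_i(x)y_i$ for uniquely determined bounded functionals $\phi_i$ on $\X$, the \Norm axiom provides the effective norm $\|c\|_M:=\|\sum_i c_iy_i\|$ on $\F^r$, so $\|K\|$ equals the operator norm of the map $\Phi:\X\to(\F^r,\|\cdot\|_M)$, $x\mapsto(\phi_1(x),\dots,\phi_r(x))$.

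To compute $\|\Phi\|$ I would use the finite-dimensionality of the target and pass to the adjoint. Since $(\F^r,\|\cdot\|_M)^*$ is a finite-dimensional normed space whose unit ball is compact, $\|\Phi\|=\|\Phi^*\|=\sup_{\|c^*\|\le1}\|\sum_i c_i^*\phi_i\|_{\X^*}$, where the outer supremum is over a compact, effectively-described subset of $\F^r$. Combined with the effective treatment of $\B(\F^n)$ from the preceding subsection and the effective fundamental theorem of algebra (Theorem~\ref{th:EFA}), this is intended to reduce $\|\Phi\|$ to a finite-dimensional optimization effectively solvable in the Pour-El--Richards sense.

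The main obstacle is the inner norm computation $\|\sum_i c_i^*\phi_i\|_{\X^*}$, which as a supremum over the computable unit ball of $\X$ is only lower semicomputable. The approximation property is essential here: effective norm-density of $\B_f(\X)$ in $\B_0(\X)$ lets us replace $K$ by sharper finite-rank approximants $K'_n\to K$ to generate effective upper bounds via $\|K\|\le\|K'_n\|+2^{-n}$. Breaking the apparent circularity (``computing $\|K\|$ needs $\|K'_n\|$'') is the delicate step I expect to require the most care; I would resolve it by a uniform induction on rank, grounded in the rank-one case where the effective extraction of the pairing $\phi\otimes y$ combined with the approximation property forces the norm to coincide with a product of two quantities already obtainable from the effective finite-dimensional theory.
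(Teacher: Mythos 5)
Your opening reduction is exactly the paper's: write the effectively compact \(T\) as an effective limit of finite-rank operators \(K_n\), use \(|\,\|K_n\|-\|T\|\,|\le\|K_n-T\|\) to transfer the modulus, and invoke closure of \(\Rk\) under effective limits, so everything hinges on the norm of a single effective finite-rank operator being computable uniformly in its index. It is in that remaining step that your argument has a genuine gap, and you have in fact named it yourself: in a general Banach space the quantities you need, \(\|\sum_i c_i^*\phi_i\|_{\X^*}\) and even the rank-one norm \(\|\phi\|_{\X^*}\,\|y\|\), are dual norms, i.e.\ suprema over the unit ball of \(\X\), hence a priori only lower semicomputable. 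Your proposed escape --- upper bounds \(\|K\|\le\|K_n'\|+2^{-n}\) from the approximation property, organized as an induction on rank --- does not close the circle: the approximants \(K_n'\) supplied by the approximation property carry no rank bound relative to \(K\), so the induction has no well-founded descent, and its base case already contains the untreated dual norm \(\|\phi\|_{\X^*}\). (The preliminary step of effectively extracting a maximal independent subfamily of \(\{Ke_n\}\) is also not available: linear independence of computable vectors is only semi-decidable, since one cannot decide whether a computable vector vanishes.)

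The paper dissolves this difficulty in the definition rather than by computation: an effectively compact operator is \emph{given} as \(\lim^*_n\sum_{i=1}^{d(n)}x_{i,n}\odot y_{i,n}\) with \((x\odot y)(z)=(z|y)x\), so the finite-rank representation, including the vectors \(y_{i,n}\) that induce the functionals, is part of the index and nothing has to be extracted. In that (Hilbert-space flavored) pairing the rank-one norm is \(\|x\|\,\|y\|\), computable by the Norm axiom, and the rank-\(n\) norm is asserted to be a ``complicated but computable'' expression --- in effect the square root of the largest eigenvalue of a finite matrix of computable inner products, handled by Theorem~\ref{th:EFA} and the \(\B(\C^n)\) analysis; no dual norm is ever evaluated. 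If you want the theorem at the stated level of generality (arbitrary \(\X\) with the approximation property), you must either build the norms \(\|\phi_i\|_{\X^*}\) of the representing functionals into the data of an effectively compact operator, or supply an argument that they are computable; your current proposal does neither.
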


		\begin{proof}
			By to limit ourselves to computable double sequences \((x_{i,n})\) and \((y_{i,n})\), recursive \(d\) and effective limes, 
			we get the set of {\em effectively compact operators}. 
			These are clearly compact, and effective since norm convergence implies point-wise convergence.

			For rank 1 operators we have \(\| x \odot y \| = \|x\| \|y\|\). 
			For rank \(n\) operators we get a complicated expression that nevertheless is computable. 
			Normen to an effective limes of finite rank operators are effective limes of the norm to finite rank operators.
		\end{proof}

		In general we can say that 
		the concept of effective Banach space advantageously could be limited to separable Banach spaces with Schauder bases. 
		Then the numbering \(\mu\) would get a lower complexity, and \(\B_0(\X)\) would have a computable norm since 
		each Banach space with Schauder bases has the approximation property, se \cite{Ban}. 
		All separable Banach space that naturally arise in analysis have Schauder bases, 
		the counterexamples by Per Enflo was {\em constructed} (but nevertheless a counterexample).

		Banach space \(\X\) with Schauder bases are separable, 
		then the rational linear span of a Schauder basis is a countable dense subset in \(\X\).
		Separable Hilbert space have in particular good Schauder bases, namely countable orthonormal bases.

		\begin{Bem}
			There exist \(\beth_1\) (i.e. continuum many) orthonormal bases for each separable Hilbert space of dimension \(>1\).
		\end{Bem}

		\begin{proof}
			We can find \(\beth_1\) unit vectors that are mutually non-orthogonal already in \(\F^2\), namely 
			\(\{ (\op{cos} \theta, \op{sin} \theta) | 0 \leq \theta < 2 \pi \}\), 
			and \(\F^2\) can be embedded in each \(>1\)-dimensional Hilbert space. 
			By Gram-Schmidt's orthogonalization process these unit vectors are incorporated in one orthonormal basis each.
		\end{proof}

		Av disse \(\beth_1\) orthonormal bases choose we ut en, and call it computable. 
		This are implicitly in functional analysis when one choose a orthonormal basis that are ``easy to calculate with'', 
		such as e.g. \(\{ t \mapsto \frac{1}{\sqrt{2 \pi}} e^{int} \}_{n \in \N}\) for \(L^2([-\pi,\pi])\).

		\begin{Bem}
			If the separable Banach space \(\X\) and its dual \(\X^*\) both have Schauder bases, 
			then elements of \(\B(\X)\) can be given by scalar matrices of countable dimension.
		\end{Bem}

		\begin{proof}
			Let namely \((e_n)\) be a Schauder basis for \(\X\). 
			I separable spaces are this countable, since we where cannot ha uncountably many independent vectors: 
			each of these vectors will namely be contained in an open set that does not contain any of the other vectors in the basis.

			For \(T \in \B(\X)\) we have by continuity \(Tx = T \sum \alpha_n e_n = \sum \alpha_n (T e_n)\) and \(T e_m = \sum \beta_{mn} e_n\), 
			so we can consider \(T\) to be the matrix \((\beta_{mn})\).
		\end{proof}

		Since Schauder bases have dense span it is wise to choose a Schauder basis as e.g.s. (when one exists).

		\paragraph{Representation of mathematical objects}

		How we imagine a mathematical object as given, what questions we can pose to an oracle about the object, 
		has significance for what operations can be considered effective.
		Can we effectively find the norm of a linear operator? This depends on how we are given operator: 
		by the graph \(\{  \la x,Tx \ra | x \in \X \}\), 
		or (in the Hilbert space case) by the matrix elements \(\{ a_{ij} = (e_i|Te_j) | i,j \in \N \}\) 
		where \(\{ e_j \}\) is a orthonormal basis.

		One has not found any simple condition on the matrix elements for an operator to be bounded. 
		The conditions \(\sum_{j,k} |a_{jk}|^2 < \infty\) and \(\sum_{j,k} |a_{jk}| < \infty\) are for example sufficient but not necessary.

		In special cases other ways of being given an operator can be better: 
		For an operator of rank \(n\) on a Hilbert space we would prefer to be given \(2n\) vectors, 
		since this is sufficient to determine such an operator uniquely.

		A compact self-adjoint operator is determined by the sequence of eigenvalues \(\{ \lambda_n \}\) 
		together with a orthonormal basis where it becomes a diagonal operator; 
		in practice this means that if we start with a determined ``computable'' orthonormal basis, 
		we must be given a change of basis, i.e. a unitary operator. 
		But how shall a be given a unitary operator \(U\)? 
		If \(\{ e_n \}\) are it ``computable'' orthonormal the basis can \(U\) be given by \(\{ Ue_n \}\). 
		We can require that we shall be given the sequence of (real) eigenvalues ordered by size, the largest first. 
		In this case it becomes easy to find the norm, which then equals the absolute value of its largest eigenvalue. 
		But then we get a numbering that cannot be effectively reduced to the graph, 
		i.e. we can not effectively from the graph find the largest eigenvalue. 
		In an extreme case we can say that to be given an operator is to be given the graph and the norm. 
		The point must be to reveal the effective content of classical mathematics, then the numbering must be chosen for that purpose.

\section{\(\B(\H)\) as an effective topological space}
	Also for general topological spaces one can study computability.

\begin{Def}[Nogina, 1966]
	A separable topological space \(S\) with a countable dense subset \(X\) and a countable basis \(U\) for subspaces the topology on \(X\), 
	with numberings \(\alpha\) of \(X\) and \(\beta\) of \(U\), 
	is called an effective topological space if
	\begin{description}
		\item[ (i)] \(\exists f \in \mc{R} \quad \forall e \quad \alpha e \in \beta f e\)
		\item[(ii)] 
			\(\exists g \in \mc{R} \quad \forall \la e_1, e_2, e_3 \ra \quad \alpha e_1 \in \beta g (e_1,e_2,e_3) 
			\delm \beta e_2 \snitt \beta e_3\)
	\end{description}
\end{Def}

In an effective topological space we shall thus effectively could find a open basic neighborhood of a computable point. Furthermore shall the basis be effectively closed under intersection.

The operator space \(\B(\H)\) are not an effective Banach space, then it is not separable in the norm topology. 
One can ask if the set \(\B(\H)\) becomes an effective topological space if we gives it a separable topology.

\begin{Def}[Strong and weak operator topology on \(\B(\H)\)]
	\(\B(\H)\) are the set of continuous linear operators on a separable Hilbert space \(\H\).

	The strong operator topology on \(\B(\H)\) are the topology induced by the semi-norms \(m_x\) (\(x \in \H\)) given by \(m_x(T) = \|Tx\|\).

	The weak operator topology on \(\B(\H)\) are the topology induced by the semi-norms
	\(m_{x,y}\) (\(x,y \in \H\)) given by \(m_{x,y}(T)=|(y|Tx)|\).
\end{Def}

\begin{Theorem}
	\(\B(\H)\) in respectively the weak and strong operator topology are effective topological spaces.
\end{Theorem}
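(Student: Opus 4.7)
The plan is to exhibit, for each topology, a countable dense subset of $\B(\H)$ together with a countable basis of open sets, both equipped with effective numberings, and then to verify Nogina's two axioms. I would fix an e.g.s.\ $\{e_n\}$ for $\H$ which may be taken to be an orthonormal basis, let $A_\H$ denote its rational span inside $\H$, and take as the dense subset the rational linear span $X \subseteq \B(\H)$ of the rank-one operators $e_i \odot e_j$ with $(e_i \odot e_j)(z) = (z|e_j)e_i$. This $X$ is countable and carries an evident effective numbering $\alpha$. Density of $X$ in the strong operator topology (hence in the weak) follows from $P_n T P_n \to T$ strongly, where $P_n$ is the projection onto $\op{span}\{e_1,\dots,e_n\}$, combined with the identity $P_n T P_n = \sum_{i,j \leq n}(e_i|T e_j)\, e_i \odot e_j$ and rational approximation of the coefficients.

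For the subbasis I would take, in SOT,
\[
V_{x,y,q} = \{T \in \B(\H) : \|Tx - y\| < q\}, \qquad x,y \in A_\H,\ q \in \Q_{>0},
\]
and, in WOT,
\[
W_{x,y,c,q} = \{T \in \B(\H) : |(y|Tx) - c| < q\}, \qquad x,y \in A_\H,\ c \in \Q + i\Q,\ q \in \Q_{>0}.
\]
Each family consists of open sets in its topology and generates it, since any standard subbasic neighborhood of a given $T_0$ can be sandwiched by members of the family after approximating the arguments in $A_\H$ and replacing $q$ by a smaller rational. Let $U$ be the closure under finite intersection, and $\beta$ the natural numbering that encodes a basic set as a tuple of subbasic indices.

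At this point both Nogina axioms become essentially bookkeeping. For (i), I observe that $V_{0,0,1} = W_{0,0,0,1} = \B(\H)$, since the defining inequality reduces to $0 < 1$, so $f(e)$ can be chosen as the fixed index of this everywhere-containing basic neighborhood. For (ii), I exploit closure of $U$ under finite intersection: given $e_2, e_3$ encoding basic sets as tuples of subbasic indices, their concatenation encodes $\beta e_2 \cap \beta e_3$, and letting $g(e_1, e_2, e_3)$ be this concatenation yields $\beta g(e_1, e_2, e_3) = \beta e_2 \cap \beta e_3$, so the containment $\alpha e_1 \in \beta g(e_1, e_2, e_3) \subseteq \beta e_2 \cap \beta e_3$ holds whenever the hypothesis $\alpha e_1 \in \beta e_2 \cap \beta e_3$ does.

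The hard part will be verifying that the rationalised subbasic families really generate the full SOT and WOT on $\B(\H)$, rather than a strictly coarser topology; this reduces to approximating arbitrary $x \in \H$ and $T_0 x \in \H$ by elements of $A_\H$, using effectivity of the norm, inner product, and vector operations, which are guaranteed by $\H$ being an effective Banach space with $\{e_n\}$ as an e.g.s. Once this generation claim is secured, nothing further is needed and both axioms reduce to the trivial set-theoretic identities displayed above.
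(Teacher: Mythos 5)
Your overall architecture is the paper's: a countable dense set of finite-rank operators (your rational span of the \(e_i \odot e_j\) versus the paper's abstract norm-dense subset of \(\B_f(\H)\)), a countable family of ``rational'' basic open sets, and then bookkeeping for Nogina's two axioms. Your treatment of axiom (ii) is in fact cleaner than the paper's: by closing the subbasis under finite intersections and coding basic sets as tuples, the required \(g\) is literal concatenation and the inclusion \(\beta g(e_1,e_2,e_3) = \beta e_2 \snitt \beta e_3\) is exact, whereas the paper's recipe \(B(T^k, \op{min}(a_{(n)_1},a_{(m)_1}), \vec{x}_{(n)_2} \union \vec{x}_{(m)_2})\) recenters at \(T^k\) without shrinking the radius and so is not obviously contained in the intersection. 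Your axiom (i) via \(V_{0,0,1}=\B(\H)\) is a degenerate but legitimate witness.

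The genuine gap is in the step you yourself flag as the hard part. You claim the rationalised subbasis generates the full strong and weak operator topologies on \(\B(\H)\), and that this reduces to approximating \(x\) and \(T_0x\) by elements of \(A_\H\). That reduction fails: the error term in the sandwich is \(\|(T-T_0)(x-x')\| \leq \|T-T_0\|\,\|x-x'\|\), and \(\|T-T_0\|\) is unbounded as \(T\) ranges over a subbasic SOT-neighborhood of \(T_0\), so no choice of \(x' \in A_\H\) controls it uniformly. Indeed, for infinite-dimensional \(\H\) the SOT and WOT on all of \(\B(\H)\) are not second countable, so \emph{no} countable family can be a basis for the full topology; the approximation only works on norm-bounded sets of operators. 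The statement must therefore be read, as Nogina's definition (and the paper, which only asserts a basis ``for the topology on \(\B_f(\H)\)'') implicitly does, as requiring a countable basis for the subspace topology on the countable dense subset \(X\), with the axioms quantified over points \(\alpha e_1 \in X\); even there the generation claim needs an argument that exploits boundedness on the relevant pieces of \(X\) rather than the unrestricted sandwich you describe. As written, your ``once this generation claim is secured, nothing further is needed'' conceals the one step that does not go through.
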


\begin{proof}
	In these two topologies \(\B_f(\H)\), the set of operators of finite rank, is dense in \(\B(\H)\).

	An operator of finite rank can be written in the form \(z \mapsto \sum_{i=1}^r (z|y_i)x_i\) where \(r\) is the rank and \(x_i, y_i \in \H\). 
	Since \(\H\) is separable, \(\B_f(\H)\) is separable (in the norm topology, and thus also in all weaker topologies).
	Let \(\{ T^n \}\) be a countable (norm-)dense subset of \(\B_f(\H)\).

	Let \(\{ a_n \}\) be a numbering of the rational numbers \(\Q\).

	Let \(\{ \vec{x}_n \}\) be a numbering of the finite sequences from a countable dense subset of \(\H\).

	A countable basis for the strong operator topology on \(\B_f(\H)\) is given by

	\[
		B_s(T^{(n)_0}, a_{(n)_1}, \vec{x}_{(n)_2} ) = \{ T | \, \|(T-T^{(n)_0})x\| < a_{(n)_1} \quad \forall x \in \vec{x}_{(n)_2} \}
	\]

	A countable basis for the weak operator topology on \(\B_f(\H)\) is

	\[
		B_w(T^{(n)_0}, a_{(n)_1}, \vec{x}_{(n)_2} ) = \{ T | |(y|(T-T^{(n)_0})x)| < a_{(n)_1} \quad \forall x,y \in \vec{x}_{(n)_2} \}
	\]

	Condition (i): Here we have many possibilities, e.g. \(T \mapsto B_s(T,1,e_1)\) and \(T \mapsto B_w(T,1,e_1)\) 
	where \(e_1\) is a certain vector in \(\H\).

	Condition (ii): \(n,m,k \mapsto B(T^k, \op{min}(a_{(n)_1}, a_{(m)_1}), \vec{x}_{(n)_2} \union \vec{x}_{(m)_2} )\).
\end{proof}

\paragraph{Separable spaces}
	In the norm topology \(\B(\H)\) is not separable, and therefore not an effective topological space.

	In applications it is often desirable to ha a topology not only on the space they effective operators act on, 
	but also on the set of effective operators. 
	A perturbation \(T'\) of an operator \(T\) is an operator that in some sense lies near \(T\). 
	A problem can often not be solved for \(T\) directly, but for approximations to \(T\). 
	To make this precise it is desirable to make \(\B(\H)\) into a topological space. 
	If one studies the effective operators it is then desirable to make \(\B(\H)\) into an effective topological space.
	The idea that effective topological spaces must be separable can be defended based on the thought that 
	the subset of computable elements in a given set always must be countable. 
	Intuitively this is because there are only countably many computer programs in a given language with finite alphabet. 
	In practice all objects studied will often be computable. 
	A good example is the real numbers, where all numbers one encounters in analysis are computable. 
	For this to be possible the computable elements ought to form a dense set.

\section{Dual spaces}
	The dual space of a Banach space \(\X\) is defined by \(\X^*= \B(\X,\F)\), the space of of bounded linear operators, often called functionals, from \(\X\) to \(\F\).

The operation of taking the dual \(*\) on the class of Banach spaces does not preserve separability. 
E.g., \(\ell^1\), the space of absolutely summable complex sequences, is separable.
Its dual \((\ell^1)^* = \ell^\infty\), the space of bounded complex sequences, is non-separable: \(\{ 0,1 \}^\N \delm \ell^\infty\). 
If \(\xi,\eta \in \{ 0,1 \}^\N\), \(\xi \neq \eta\), then \(\|\xi-\eta\|_\infty = \sup_n |\xi_n - \eta_n| = 1\). 
Thus there exist uncountably many elements of \(\ell^\infty\) having mutual distance \(1\). 
A countable subset can therefore not intersect each open ball of radius 1, and thus cannot be dense.

The adjoint \(T^* \in \B(\Y^*,\X^*)\) of an operator \(T \in \B(\X,\Y)\) is defined by \((T^* \varphi)(x) = \varphi(Tx)\).
Composition of operators is an effective operation, for from \(F \alpha e = \alpha f(e)\) and \(G \alpha e = \alpha g(e)\) follows 
\(F \circ G \alpha e = F \alpha g(e) = \alpha f(g(e))\), and the set of all partial recursive functions is closed under composition.
Thus the class of effective operators is closed under adjunction, 
assuming that \(\X\), \(\Y\), \(\X^*\) and \(\Y^*\) are effective Banach spaces, which in turn requires them to be separable.

Separable spaces of operators may often be made effective. An example is \(\B_0(\X)\) when \(\X\) has the approximation property. 
Another is \(L^p(\R)\) (\(1<p<\infty\)), the space of equivalence classes of Lebesgue-measurable functions 
\(f:\R \saa \C\) such that \(\int_{-\infty}^{\infty} |f(x)|^p dx < \infty\), 
where two functions are called equivalent if they are equal on a set whose complement has measure zero. 
As shown in \cite{Pour} this can be made into an effective Banach space.
Since \(L^p\) is the dual of \(L^q\) when \(p\) and \(q\) are conjugate exponents (\(p^{-1}+q^{-1}=1\)), 
and the operator norm from \(\B(L^q,\C)\) is equal the \(L^p\)-norm \((\int_{-\infty}^{\infty} |f(x)|^p dx)^{1/p}\), 
we have that \(L^p\) is an example of an effective space of operators.

\paragraph{Analysis of the operator norm's effectivity}
	It may seem as if it was the operator norm by itself that creates trouble, 
	but from these examples of separable spaces of operators with effective norm it may seem that 
	the problem is rather tied to non-separability in general. 
	In support of this view we may point to \(\ell^\infty\), which has the supremum norm.

	In general one can say that the operator norm is expressed by a supremum over an infinite set and therefore is not effective unless 
	there is an alternate characterization, e.g. if 
	the space is separable and the operators in the countable dense subset have an especially workable norm. 
	In the case of \(\B(\X)\), the subspace spanned by the effective operators in \(\B(\X)\) is separable, 
	but that does not help make the norm effective.

\bibliography{h}
\bibliographystyle{alpha}
\end{document}